\title{Threefolds of globally $F$-regular type with nef anti-canonical divisor}
\author{Paolo Cascini}
\address{Department of Mathematics, Imperial College London, 180 Queen’s Gate, London SW7 2AZ, UK}
\email{p.cascini@imperial.ac.uk}
\author{Tatsuro Kawakami}
\address{Department of Mathematics, Graduate School of Science, Kyoto University, Kyoto 606-8502, Japan}
\email{tatsurokawakami0@gmail.com}
\author{Shunsuke Takagi}
\address{Graduate School of Mathematical Sciences, University of Tokyo, 3-8-1 Komaba,
Meguro-ku, Tokyo 153-8914, Japan}
\email{stakagi@ms.u-tokyo.ac.jp}
\def\ge{\geqslant}
\def\le{\leqslant}
\def\phi{\varphi}
\def\epsilon{\varepsilon}
\def\Spec{\operatorname{Spec}}
\def\Pic{\operatorname{Pic}}
\def\max{\operatorname{max}}
\def\Bs{\operatorname{Bs}}
\newcommand{\Q}{\mathbb{Q}}
\newcommand{\Z}{\mathbb{Z}}
\newcommand{\PP}{\mathbb{P}}
\newcommand{\sO}{\mathcal{O}}
\theoremstyle{plain}
\newtheorem{thm}{Theorem}[section] 
\newtheorem{cor}[thm]{Corollary}
\newtheorem{prop}[thm]{Proposition}
\newtheorem{conj}[thm]{Conjecture}
\newtheorem{lem}[thm]{Lemma}
\theoremstyle{definition} 
\newtheorem{defn}[thm]{Definition}
\newtheorem{eg}[thm]{Example} 
\theoremstyle{remark}
\newtheorem{rem}[thm]{Remark}
\newtheorem{ques}[thm]{Question}
\newtheorem{defn and notation}[thm]{Definition and Notation}
\newtheorem{cln}{Claim}
\theoremstyle{plain}
\newtheorem{theo}{Theorem}
\newtheorem{PROP}[theo]{Proposition}
\keywords{Weak Fano manifold, Globally $F$-regular variety, modulo $p$ reduction}
\subjclass[2020]{Primary 14J45; Secondary 14J30, 13A35}
\dedicatory{Dedicated to Professor Kei-ichi Watanabe on the occasion of his eightieth birthday.}
\begin{document}
\tolerance = 9999

\begin{abstract}
As a special case of a conjecture by Schwede and Smith \cite{SS10}, we prove that a smooth complex projective threefold with nef anti-canonical divisor is weak Fano if it is of globally $F$-regular type. 
\end{abstract}

\maketitle
\markboth{P.~CASCINI, T.~KAWAKAMI and S.~TAKAGI}{THREEFOLDS OF GLOBALLY $F$-REGULAR TYPE}

%\setcounter{tocdepth}{1}
%\tableofcontents

\section{Introduction}

Strong $F$-regularity, introduced by Hochster and Huneke \cite{HH89}, is a class of singularities in positive characteristic defined in terms of the Frobenius morphism. 
These singularities play a central role in the theory of tight closure, a powerful tool in commutative algebra in positive characteristic. 
During the 1990s, it turned out that they are also related to singularities in the minimal model program. 
It follows from the results of Smith \cite{Smith97}, Hara \cite{Hara98}, and Mehta-Srinivas \cite{Mehta-Srinivas97} (see also \cite{Hara-Watanabe}) that a normal $\Q$-Gorenstein singularity $(X,x)$ over a field of characteristic zero is klt if and only if its reduction modulo $p$ is strongly $F$-regular for almost all $p$. 

As a global analog of strongly $F$-regular singularities, Smith \cite{Smith00} introduced the notion of globally $F$-regular varieties, which, in particular,  only admit strongly $F$-regular singularities. 
As the name suggests, global $F$-regularity is a global condition on projective varieties in positive characteristic. 
For instance, their anti-canonical divisors are always big, and a Kawamata-Viehweg type vanishing theorem holds on these varieties. 
Globally $F$-regular varieties form an important subclass of Frobenius split varieties,  
including projective toric and Schubert varieties in positive characteristic. 
As a global version of the above correspondence between strongly $F$-regular and klt singularities, Schwede and Smith \cite{SS10} proposed the following conjecture: 
 
\begin{conj}[\textup{\cite[Question 7.1]{SS10}}]\label{Introconj}
A projective variety $X$ over a field of characteristic zero is of globally $F$-regular type if and only if it is of Fano type, that is, there exists an effective $\Q$-Weil divisor $\Delta$ on $X$ such that $-(K_X+\Delta)$ is ample $\Q$-Cartier and $(X,\Delta)$ is klt. 
\end{conj}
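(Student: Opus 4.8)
We do not know how to approach Conjecture~\ref{Introconj} in general, and the aim of this paper is to establish it for smooth complex projective threefolds $X$ with $-K_X$ nef. For such $X$, ``of Fano type'' is equivalent to ``weak Fano'', i.e.\ to $-K_X$ being nef and big. Indeed, one implication is classical: if $(X,\Delta)$ is klt with $-(K_X+\Delta)$ ample $\Q$-Cartier, then the reduction modulo $p$ of $(X,\Delta)$ is globally $F$-regular for all but finitely many $p$ — the global counterpart of the correspondence between klt and strongly $F$-regular singularities, cf.\ \cite{SS10} — so $X$ is of globally $F$-regular type even without assuming $-K_X$ nef; conversely, a smooth projective variety whose anti-canonical divisor is nef and big is automatically of Fano type. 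Hence the entire content of the main result is the implication that \emph{if $X$ is a smooth complex projective threefold of globally $F$-regular type with $-K_X$ nef, then $-K_X$ is big}, and the plan is to prove this as follows.

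The argument is by contradiction, via reduction modulo $p$. The one input from the positive-characteristic side is the theorem of Schwede--Smith \cite{SS10} that a globally $F$-regular projective variety $V$ over an $F$-finite field carries an effective $\Q$-divisor $\Delta_V$ with $(V,\Delta_V)$ globally $F$-regular and $-(K_V+\Delta_V)$ ample; in particular $-K_V$ is big. Consequently, spreading out $X$ over a suitable finitely generated $\Z$-algebra $A$ (shrunk so that the auxiliary constructions below also spread out), there is a dense set of closed points $\mu\in\Spec A$ for which $X_\mu$ is globally $F$-regular and $-K_{X_\mu}$ is big.

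Suppose now, for contradiction, that $X$ is as above but $-K_X$ is not big. Since $X$ is a smooth projective threefold and $-K_X$ is nef, $-K_X$ is semiample (this is known for threefolds), so there is a fibration $f\colon X\to Y$ onto a normal projective variety with $\dim Y$ equal to the numerical dimension $\nu(-K_X)\le 2$ and $-K_X\sim_{\Q}f^{*}A$ for some ample $\Q$-divisor $A$ on $Y$. A general fibre $F$ of $f$ — or $F=X$ itself when $\nu(-K_X)=0$ — is a positive-dimensional smooth projective variety with $-K_F=-K_X|_F\equiv 0$: the normal bundle $N_{F/X}$ is trivial, so $K_F=K_X|_F$ by adjunction, and $(f^{*}A)|_F\equiv 0$. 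Picking $\mu$ in the dense set above, we then apply a Bertini-type theorem for global $F$-regularity to the base-point-free linear system $|f_\mu^{*}H|$ on $X_\mu$ ($H$ very ample on $Y_\mu$), iterated $\dim Y_\mu$ times, to conclude that a general closed fibre $F_\mu$ of $f_\mu$ is again globally $F$-regular. By the previous paragraph $-K_{F_\mu}$ is then big; but $-K_{F_\mu}=-K_{X_\mu}|_{F_\mu}\equiv 0$, since $-K_{X_\mu}\sim_{\Q}f_\mu^{*}A_\mu$, and a numerically trivial divisor on a positive-dimensional variety is never big — a contradiction. Hence $\nu(-K_X)=3$, that is $-K_X$ is big, and $X$ is a smooth (hence klt) weak Fano threefold, which is of Fano type; this establishes Conjecture~\ref{Introconj} for such $X$.

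The step I expect to be the main obstacle is the Bertini-type statement for global $F$-regularity used above: one must control how global $F$-regularity (or global $F$-regularity of a pair) passes to a general member of a base-point-free linear system and, by iteration, to a general closed fibre of the morphism $f_\mu\colon X_\mu\to Y_\mu$. A subsidiary technical point is the semiampleness of $-K_X$ for a smooth threefold with $-K_X$ nef in the cases $\nu(-K_X)\in\{1,2\}$, which I would take from the literature on threefolds with nef anti-canonical divisor, or reprove only in the form actually needed — the existence of a fibration $f\colon X\to Y$ with $-K_X\sim_{\Q}f^{*}A$, $A$ ample, and $\dim Y=\nu(-K_X)$. Granting these, the final contradiction is immediate, since global $F$-regularity forces $-K_{F_\mu}$ to be big while $-K_{F_\mu}$ is numerically trivial on a positive-dimensional variety.
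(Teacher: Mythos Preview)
Your high-level strategy --- produce a fibration $f\colon X\to Y$ with $\dim Y<3$ whose general fibre $F$ has $-K_F\equiv 0$, then observe that the reduction $F_\mu$ is globally $F$-regular and hence has big anticanonical divisor, a contradiction --- is sound once the fibration exists, and the paper in fact uses exactly this endgame in one sub-case (Step~2 in the proof of Proposition~\ref{prop:nu=2}). But you have inverted the two difficulties.

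The Bertini-type input you flag as the ``main obstacle'' is not the problem: the statement that a general closed fibre of a morphism from a globally $F$-regular variety is again globally $F$-regular is available in the literature, and the paper cites and uses it verbatim.

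The genuine gap is what you call a ``subsidiary technical point'': semiampleness of $-K_X$ for a smooth threefold with $-K_X$ nef is \emph{not} known, even assuming rational connectedness. Your parenthetical ``this is known for threefolds'' is unjustified; the paper explicitly describes the Bauer--Peternell/Xie work on such threefolds as a \emph{partial} classification, and anticanonical abundance does not follow from ordinary abundance for $K_X$. Most of the paper's effort goes into circumventing exactly this. For $\nu(-K_X)=2$ the paper never proves $-K_X$ semiample: it uses Riemann--Roch (with $\chi(\sO_X)=1$, itself a consequence of global $F$-regularity) to get $h^0(-K_X)\ge 3$, passes to the movable part $M$ of $|-K_X|$, runs flops to make $M$ nef, and then applies a new semiampleness criterion (Corollary~\ref{cor:abundance}, valid only because $\dim\mathrm{SB}(M)\le 1$) to obtain a fibration from $M$ rather than from $-K_X$; the fibres of this auxiliary fibration need not have trivial anticanonical class, so further case analysis is required. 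For $\nu(-K_X)=1$ the paper analyses a $K_X$-negative extremal contraction, and only after substantial work with Riemann--Roch and the structure of divisorial contractions (Lemmas~\ref{lem:nu=1,non-birat}--\ref{lem:nu=1,birat}) does it force $h^0(-K_X)\ge 2$, whence $\kappa=\nu=1$ and Kawamata's theorem yields semiampleness --- but that deduction already uses the globally $F$-regular hypothesis essentially, so it cannot be imported from the literature independently of the main assumption.

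In short: if you could establish semiampleness of $-K_X$ a priori, your argument would finish the proof in a paragraph; but establishing or avoiding semiampleness \emph{is} the content of the theorem, not a side remark.
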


Here, a projective variety $X$ over a field of characteristic zero is said to be of \textit{globally $F$-regular type} if its reduction $X_p$ modulo $p$ is globally $F$-regular for almost all $p$.
Note that $\Q$-Gorenstein varieties of globally $F$-regular type have only klt singularities due to the correspondence between strongly $F$-regular and klt singularities. 

The ``if'' part of Conjecture \ref{Introconj} was proved by Schwede and Smith in loc.~cit., relying on \cite{Takagi}, while the “only if” part remains a wide open problem. 
The case of curves is straightforward, as globally $F$-regular curves are nothing but projective lines. 
Conjecture \ref{Introconj} has been proved in the case where $X$ is a Mori dream space \cite{GOST} or when $\dim X=2$ (\cite{GT}, \cite{HP15}, and \cite{Okawa17}).
In higher dimensions, unless the variety is a Mori dream space, almost nothing is known about this conjecture.  
When $X$ is a Mori dream space, the authors of \cite{GOST} run the anti-canonical minimal model program to reduce to the case where the anti-canonical divisor $-K_X$ is nef. 
In dimension two, the authors of \cite{GT} consider the Zariski decomposition $-K_X=P+N$ of the anti-canonical divisor $-K_X$ and focus on its positive part $P$, which is nef by definition. 
Therefore, in this paper, as a special case of Conjecture \ref{Introconj}, we discuss the case where $-K_X$ is nef $\Q$-Cartier. 

A klt projective variety is of Fano type if it is weak Fano, meaning that the anti-canonical divisor is nef and big. 
Since we assume that $-K_X$ is nef, it suffices to show that it is big. 
We know that its reduction $-K_{X_p}$ modulo $p$ is big for almost all $p$, because $X_p$ is globally $F$-regular by assumption. 
However, we do not know whether this condition implies the bigness of $-K_X$. 
There exists an example of non-big Cartier divisors whose reductions modulo $p$ are big for infinitely many $p$ (see Section \ref{big example}). 
Thus, we employ the strategy of utilizing the nefness of $-K_X$ more critically. 
The main difficulty then lies in the fact that nefness is not preserved under reduction modulo $p$ in general, while ampleness and semi-ampleness are preserved. 
In fact, there exists an example of nef Cartier divisors whose reductions modulo $p$ are not nef for almost all $p$ (see, for example, \cite{Langer15}).
Therefore, it remains unclear whether the higher cohomology of nef Cartier divisors on varieties of globally $F$-regular type vanishes, although that on globally $F$-regular varieties in positive characteristic does vanish (see Section \ref{vanishing section}).
Nefness and bigness are not well-behaved under reduction modulo $p$, which makes Conjecture \ref{Introconj} difficult to solve by standard birational geometric arguments. 

Nevertheless, we can show that if $D$ is a nef $\Q$-Cartier $\Q$-Weil divisor on a projective variety over a field of characteristic zero and if the stable base locus of $D$ has dimension at most one, then its reduction $D_p$ modulo $p$ is nef for almost all $p$ (Lemma \ref{lem:nef reduction}). 
This result has several important corollaries for varieties of globally $F$-regular type, including a cohomology vanishing result and a base point free result. 

\begin{PROP}[Corollaries \ref{cor:big}, \ref{cor:nef vanishing}, \ref{cor:abundance}]\label{Introprop}
Let $X$ be a projective variety of globally $F$-regular type over a field of characteristic zero, and let $D$ be a nef Cartier divisor on $X$ whose stable base locus $\mathrm{SB}(D)$ has dimension at most one. 
\begin{enumerate}[label=$(\arabic*)$]
\item
If $-K_X$ is nef $\Q$-Cartier and the stable base locus $\mathrm{SB}(-K_X)$ has dimension at most one, then $X$ is weak Fano. 
Thus, Conjecture \ref{Introconj} holds in this case. 
\item 
$H^i(X, \sO_X(D))=0$ for all integers $i>0$. 
\item 
If $\dim X \le 3$,  then $D$ is semi-ample. 
\end{enumerate}
\end{PROP}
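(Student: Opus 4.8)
The plan is to deduce all three parts from reduction modulo $p$. Spread $X$, the divisor $D$ (and $-K_X$ in part $(1)$), a fixed very ample divisor, and finitely many sections of the line bundles in sight out over a finitely generated $\Z$-subalgebra $R$ of the ground field; after shrinking $\Spec R$ we may assume that for every closed point $p$ the fibre $X_p$ is globally $F$-regular, that $\dim\mathrm{SB}(D_p)\le\dim\mathrm{SB}(D)$ (and similarly for $-K_X$), that intersection numbers are unchanged, and that the relevant cohomology groups and spaces of sections are free over $R$ and compatible with base change. Then Lemma \ref{lem:nef reduction} makes $D_p$ nef for almost all $p$, and likewise $-K_{X_p}$ nef for almost all $p$ under the hypothesis of $(1)$. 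I shall use the two basic features of a globally $F$-regular variety $X_p$ in characteristic $p$: its anti-canonical divisor is big, and $H^i(X_p,\sO_{X_p}(L))=0$ for every $i>0$ and every nef Cartier divisor $L$.

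Given this, part $(1)$ is immediate: $-K_{X_p}$ is nef and big, so $(-K_{X_p})^{\dim X}>0$; since this number equals $(-K_X)^{\dim X}$ and a nef $\Q$-Cartier divisor whose top self-intersection is positive is big, $-K_X$ is nef and big. As $-K_X$ is $\Q$-Cartier, $X$ is $\Q$-Gorenstein of globally $F$-regular type, hence klt, so being weak Fano it is of Fano type, which gives Conjecture \ref{Introconj} in this case. Part $(2)$ is equally short: $D_p$ is nef, so Kawamata--Viehweg-type vanishing on $X_p$ gives $H^i(X_p,\sO_{X_p}(D_p))=0$ for $i>0$ and almost all $p$; since $H^i(X_R,\sO_{X_R}(D_R))$ is free over $R$ and commutes with base change, its generic fibre $H^i(X,\sO_X(D))$ is also $0$.

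For part $(3)$ I would first reduce to a statement over $X_p$. A divisor is semi-ample precisely when its stable base locus is empty, since the closed sets $\Bs|m!\,D|$ form a nonincreasing sequence and such a sequence of closed subsets of a Noetherian space has empty intersection only if some term is empty; hence it is enough to prove $\mathrm{SB}(D_p)=\emptyset$ for almost all $p$, after which semi-ampleness of $D$ follows from the base-change compatibility above. So let $D_p$ be nef on the globally $F$-regular threefold $X_p$ with $\dim\mathrm{SB}(D_p)\le 1$ (and, after base change, over $\overline{\mathbb F}_p$, which does not affect semi-ampleness). If $D_p$ is not big and $\kappa(D_p)=0$, then for sufficiently divisible $m$ the unique member of $|mD_p|$ has support equal to $\mathrm{SB}(D_p)$, which has dimension $\le 1$ and so contains no prime divisor; thus $mD_p\sim 0$ and $D_p$ is semi-ample. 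The cases $\kappa(D_p)\in\{1,2\}$ are handled by a similar but more delicate argument using the Iitaka fibration together with the bound on $\dim\mathrm{SB}(D_p)$, so the essential case is that $D_p$ is nef and big. Here I would invoke Keel's theorem: $D_p$ is semi-ample if and only if its restriction to the null locus $\mathbb E(D_p)=\bigcup\{Z:D_p|_Z\text{ not big}\}\subsetneq X_p$ is semi-ample. Components of $\mathbb E(D_p)$ of dimension $\le 1$ give no trouble, since over $\overline{\mathbb F}_p$ every nef line bundle on a proper scheme of dimension $\le 1$ is semi-ample; for a two-dimensional component $S$ one has $(D_p^2\cdot S)=0$, and $S\not\subseteq\mathrm{SB}(D_p)$ because $\dim\mathrm{SB}(D_p)\le 1$, so $D_p|_S$ is nef of numerical dimension $\le 1$ with $0\le\kappa(D_p|_S)\le 1$ and stable base locus of dimension $\le 1$, and one concludes that $D_p|_S$ is semi-ample from the structure of such divisors on the normalization of $S$ over $\overline{\mathbb F}_p$.

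The main obstacle is exactly this last step. A two-dimensional component $S$ of $\mathbb E(D_p)$ need not itself be globally $F$-regular, so the hypothesis on $X$ cannot be applied there directly; excluding Zariski-type non-semi-ample behaviour of $D_p|_S$ requires genuinely combining $\dim\mathrm{SB}(D_p)\le 1$ with $\dim X\le 3$ and a careful surface-theoretic analysis (Zariski decomposition and the Iitaka fibration of $D_p|_S$). Secondary, but still requiring care, are the uniformity of the reduction modulo $p$ — arranging $\dim\mathrm{SB}(D_p)\le\dim\mathrm{SB}(D)$ for almost all $p$ with a single bounding multiple — and the descent of semi-ampleness (equivalently, emptiness of the stable base locus) from almost all $X_p$ back to $X$.
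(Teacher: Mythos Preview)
Your treatments of parts $(1)$ and $(2)$ are correct and coincide with the paper's: reduce nefness of $D$ (resp.\ $-K_X$) modulo $p$ via Lemma~\ref{lem:nef reduction}, then use bigness of $-K_{X_\mu}$ for $(1)$ and nef vanishing on globally $F$-regular varieties for $(2)$, together with invariance of intersection numbers and cohomology under reduction.

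For part $(3)$ the paper takes a much shorter route that sidesteps both of your flagged obstacles. Once $D_{\overline{\mu}}$ is known to be nef, the paper does \emph{not} analyse $\mathbb E(D_\mu)$ via Keel. Instead it uses that a globally $F$-regular variety is of Fano type (Proposition~\ref{rem:GFR}): there is an effective $\Q$-divisor $\Delta$ on $X_{\overline{\mu}}$ with $(X_{\overline{\mu}},\Delta)$ klt and $-(K_{X_{\overline{\mu}}}+\Delta)$ ample, so $D_{\overline{\mu}}-(K_{X_{\overline{\mu}}}+\Delta)$ is ample as well, and the base point free theorem for klt threefolds in characteristic $p>5$ (Birkar--Waldron) gives semi-ampleness of $D_{\overline{\mu}}$ directly. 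This is exactly where $\dim X\le 3$ enters, and it eliminates the surface analysis on components of the null locus that you identify as the main obstacle.

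The descent is also handled differently, and the difference matters. You try to conclude from $\mathrm{SB}(D_p)=\emptyset$ for almost all $p$, which, as you note, faces a uniformity problem: the freeing multiple $m$ might depend on $p$. The paper avoids this by working at a \emph{single} closed point $\mu$. Choosing $m$ with $mD_\mu$ free and $H^i(X_\mu,\sO_{X_\mu}(mD_\mu))=0$, it passes to the $\mu$-adic completion $R=\widehat{A_\mu}$; upper semicontinuity plus Grauert's theorem force $H^0(X_R,\sO_{X_R}(mD_R))\otimes_R\kappa(\mu)\cong H^0(X_\mu,\sO_{X_\mu}(mD_\mu))$, so $\mathrm{Bs}(mD_R)$ cannot meet the closed fibre and is therefore empty, and flat base change to $k$ gives freeness of $mD$. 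So the gap in your argument is real, but the fix is not more Keel-style geometry on surfaces; it is to invoke the Fano-type structure and the base point free theorem in positive characteristic, and then lift from one fibre rather than descend from almost all.
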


In this paper, we give an affirmative answer to Conjecture \ref{Introconj} when $X$ is a smooth threefold with nef anti-canonical divisor by applying Proposition \ref{Introprop} to movable divisors on $X$ and nef effective divisors on some surfaces contained in $X$:

\begin{theo}[Theorem \ref{mainthm}]\label{Introthm}
Let $X$ be a smooth projective threefold over an algebraically closed field of characteristic zero.
If $X$ is of globally $F$-regular type and $-K_X$ is nef, then $X$ is weak Fano.
\end{theo}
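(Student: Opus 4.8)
The plan is to prove that $-K_X$ is big; since $-K_X$ is assumed nef, this is exactly the assertion that $X$ is weak Fano. I argue by contradiction, assuming $-K_X$ is nef but not big. Two reductions come first. Since $X_p$ is globally $F$-regular for almost all $p$, the divisor $-K_{X_p}$ is big, so $h^0(X_p,\sO_{X_p}(-mK_{X_p}))\neq 0$ for a suitable $m$; as $h^0(X,\sO_X(-mK_X))=h^0(X_p,\sO_{X_p}(-mK_{X_p}))$ for each fixed $m$ and almost all $p$, we get $\kappa(X,-K_X)\ge 0$, and likewise $-K_X\not\equiv 0$, so $\kappa(X,-K_X)\in\{0,1,2\}$. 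Moreover, if $\dim\mathrm{SB}(-K_X)\le 1$, then Proposition \ref{Introprop}(1) already forces $X$ to be weak Fano, contradicting that $-K_X$ is not big; hence $\mathrm{SB}(-K_X)$ contains a prime divisor $S$, and the whole task is to rule this out.

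For $m\gg 0$ sufficiently divisible, write $-mK_X=M+F$ with $M$ the movable part and $F\ge 0$ the fixed part, so that $S\le F$ and $\dim\mathrm{SB}(M)\le1$. Using the nefness of $-K_X$ to control the curves through which $M$ might fail to be nef, I would arrange --- possibly after a birational modification keeping the variety of globally $F$-regular type, or by a direct estimate based on Lemma \ref{lem:nef reduction} --- that $M$ may be taken nef; Proposition \ref{Introprop}(3) then makes $M$ semi-ample and yields the anti-canonical fibration $\phi\colon X\to Z$ with $0<\dim Z\le 2$. Fibres of $\phi$ move while $S$ is a fixed component, so $S$ is not a fibre; and the base $Z$ and the general fibre $G$ of $\phi$ are again of globally $F$-regular type, so by the known curve and surface cases (respectively $\PP^1$ and \cite{GT,HP15,Okawa17}) the base is rational and $-K_G$ is nef and big. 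On the other side, $-K_X|_S$ is nef on $S$, and after normalizing $S$ and passing to the minimal resolution $\nu\colon T\to S$ one obtains a nef $\Q$-divisor $\nu^*(-K_X|_S)$ on a smooth projective surface together with the adjunction boundary $\Delta_T$ determined by $K_T+\Delta_T=\nu^*((K_X+S)|_S)$. Playing these off against each other --- the fibration structure restricted to $S$, which yields a nef effective divisor on $T$, together with Proposition \ref{Introprop} applied on $T$ and the solved surface case of Conjecture \ref{Introconj} --- forces a contradiction with $S$ being a fixed component of every $|-mK_X|$; the degenerate case $\kappa(X,-K_X)=0$, in which $M$ contributes nothing, is handled purely on $S$.

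The main obstacle is the transfer of global $F$-regularity from $X$ to a surface contained in it: a prime divisor, all the more one lying in the stable base locus, need not inherit global $F$-regularity, so one must choose the boundary $\Delta_T$ carefully and use compatible Frobenius splittings together with inversion of adjunction to place the relevant surface in the globally $F$-regular, equivalently log Fano, type regime where the two-dimensional results and Proposition \ref{Introprop} become available. A secondary technical point, needed before Proposition \ref{Introprop}(3) can be applied to $M$, is repairing the non-nefness of the movable part without leaving the globally $F$-regular-type world; once $M$ is semi-ample, the remaining steps amount to standard geometry of threefold fibrations combined with the vanishing and base-point-freeness provided by Proposition \ref{Introprop}.
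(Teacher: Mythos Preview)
Your outline identifies some of the right objects --- the movable/fixed decomposition of $|-mK_X|$, the semi-ample fibration coming from Proposition \ref{Introprop}(3) --- but the heart of the argument is missing rather than merely sketched. The step you yourself flag as ``the main obstacle'' is left unresolved: there is no mechanism by which a prime divisor $S$ lying in $\mathrm{SB}(-K_X)$, or its resolution $T$, inherits globally $F$-regular type from $X$. Compatible Frobenius splittings and inversion of adjunction do not deliver this, because $S$ is not a log canonical centre of any klt pair you have constructed (indeed you have no boundary through $S$ at all), and without that hypothesis Proposition \ref{Introprop} and the surface case of Conjecture \ref{Introconj} simply do not apply on $T$. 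The endgame ``playing these off against each other \dots\ forces a contradiction'' never names which numerical or cohomological invariant is violated, and the $\kappa(-K_X)=0$ case is not treated at all (``handled purely on $S$'' is not an argument). Making $M$ nef ``possibly after a birational modification'' is also substantive: this is exactly where the paper needs a sequence of flops and their termination.

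The paper's proof bypasses $S$ entirely. It splits on the numerical dimension $\nu(-K_X)\in\{0,1,2\}$ rather than on $\kappa$, and in each case manufactures a contradiction by restricting $-K_X$ to a \emph{general} surface arising from the MMP --- a general fibre of the fibration attached to the movable part after flops when $\nu=2$, or, when $\nu=1$, a general fibre (or preimage of a general line) of a $K_X$-negative extremal contraction, treated separately in the Mori fibre space and divisorial cases. Because these surfaces are general rather than fixed components, the restriction of $-K_X$ is effective and Lemma \ref{lem:nef reduction} applies to reduce nefness modulo $p$, after which bigness on the reduction (Lemma \ref{lem:restriction}) produces the numerical contradiction via Riemann--Roch. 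No inheritance of global $F$-regularity to a base-locus component is ever required.
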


We note that smooth rationally connected threefolds with nef anti-canonical divisor have been partially classified by Bauer-Peternell \cite{Bauer-Peternell} and Z.~Xie \cite{Xie20}, \cite{Xie23}. 
However, our proof does not directly rely on their classification. 
In fact, almost all the arguments in this paper remain valid without assuming the smoothness of $X$ (see Remark \ref{smoothness rem}).

\section*{Acknowledgments}
The authors are grateful to Jungkai Chen, Yoshinori Gongyo,  Kenta Sato and Calum Spicer for fruitful discussions. 
The third author also would like to express his gratitude to Imperial College London, where a part of this work was done, for their hospitality during the summer of 2018. 
The first author was partially supported by an EPSRC grant. 
The second author was supported by JSPS KAKENHI Grant numbers JP22KJ1771 and JP24K16897.
The third author was supported by JSPS KAKENHI Grant Numbers JP15KK0152, JP22H01112, JP23K22383 and JP20H00111.

%%%%%%%%%%%%%%%%%%%%%%%%%%%%%%%%%%%%%%%%%%%%%%%%%%%%%%%%%%%%%%%%%%%%%%%%%%%%%%%%%%%%%%%%%%%%%%%%%%%%%%%%%%%%%%%%%%%%%%%%%%%%%%%%%%%%%%%%%%%%%%%%%%%%%%%%%%%%
\section{Preliminaries}

\subsection{Notation and terminology}

In this subsection, we summarise the notation and terminology that will be used in this paper. 
\begin{enumerate}
\item 
All schemes are assumed to be Noetherian and separated. 
A \textit{variety} is an integral scheme of finite type over a field, and a \textit{curve} (resp.~\textit{surface}, \textit{threefold}) is a variety of dimension one (resp.~two, three). 
\item 
An $\mathbb{F}_p$-scheme $X$ (resp.~a Noetherian ring $R$ of characteristic $p>0$) is said to be \textit{$F$-finite} if the absolute Frobenius morphism $F\colon  X \to X$ (resp. $F\colon \Spec R \to \Spec R$) is a finite morphism. 
A field $k$ of characteristic $p>0$ is $F$-finite if and only if the degree of the field extension $k/k^p$ is finite. 
If $X$ is a variety over an $F$-finite field, then $X$ is $F$-finite. 
\item Given a coherent sheaf $\mathcal{F}$ on a projective variety $X$ over a field $k$, $\dim_{k}\,H^i(X,\mathcal{F})$ is denoted by $h^i(X,\mathcal{F})$. 
\item A $\Q$-Weil divisor $D$ on a normal variety $X$ is called $\textit{big}$ if there exist an ample $\Q$-Cartier $\Q$-Weil divisor $H$ and an effective $\Q$-Weil divisor $E$ on $X$ such that $D \sim_{\Q} H+E$. 
\end{enumerate}

\subsection{Reduction modulo $p$}
In this subsection, we briefly explain how to reduce varieties, closed subschemes, divisors from characteristic zero to positive characteristic. 
Our main references are \cite{Hochster-Huneke20} and \cite{Mustata-Srinivas11}. 

Let $X$ be a variety over a field $k$ of characteristic zero and $Z$ be a closed subscheme of $X$.
By choosing an appropriate finitely generated $\Z$-subalgebra $A$ of $k$, we can construct a scheme $X_A$ of finite type over $A$ and a closed subscheme $Z_A$ such that the following commutative diagram with horizontal isomorphisms holds: 
\[\xymatrix{
X \ar[r]^{\cong \hspace*{3.5em}} &  X_A \times_{\Spec  A} \Spec  k\\
Z \ar[r]^{\cong \hspace*{3.5em}} \ar@{^{(}->}[u] & Z_{A} \times_{\Spec  A} \Spec  k. \ar@{^{(}->}[u]\\
}\]
We can enlarge $A$ by localizing at a single nonzero element and replacing $X_A$ and $Z_{A}$ with the corresponding open subschemes. 
Thus, by generic freeness, we may assume that $X_A$ and $Z_{A}$ are flat over $\Spec A$.
We refer to $(X_A, Z_A)$ as a \textit{model} of $(X, Z)$ over $A$.  
Enlarging $A$ further if necessary, we may also assume that $X_A$ is integral and that $Z_A$ is irreducible (resp.~reduced, integral) if $Z$ is so. 
Given a closed point $\mu \in \Spec A$, the fiber of $X_A$ (resp.~$Z_{A}$) over $\mu$ is denoted by $X_{\mu}$ (resp.~$Z_{\mu}$). 
Then $X_{\mu}$ is a scheme of finite type over the residue field $k(\mu)$ of $\mu$, which is a finite field since $A$ is a finitely generated algebra over $\mathbb{Z}$, and $Z_{\mu}$ is a closed subscheme of $X_{\mu}$.  
For general closed points $\mu \in \Spec A$, one can observe that $X_{\mu}$ is a variety and $Z_{\mu}$ is irreducible (resp.~reduced, integral) if $Z$ is so. 

Suppose that $X$ is normal and $D=\sum_i d_i D_i$ is a $\Q$-Weil divisor on $X$. 
We take a model $(X_A, D_{i,A})$ of $(X, D_i)$ over $A$. 
Enlarging $A$ if necessary, we may assume that $X_A$ is normal and $D_{i, A}$ is a prime divisor on $X_A$. 
We refer to $(X_A, D_A:=\sum_i d_i D_{i,A})$ as a \textit{model} of $(X, D)$ over $A$.   
For general closed points $\mu \in \Spec A$, it follows that $X_{\mu}$ is a normal variety over $\kappa(\mu)$ and $D_{i, \mu}$ is a prime divisor on $X_{\mu}$, so that $D_{\mu}:=\sum_i d_i D_{i, \mu}$ is a $\mathbb{Q}$-Weil divisor on $X_{\mu}$.  
If $D$ is Cartier (resp. $\Q$-Cartier), then so is $D_{\mu}$ for general closed points $\mu \in \Spec A$. 

The following lemma is useful to compare the cohomology of $X$ with that of $X_{\mu}$. 
\begin{lem}[\textup{\cite[Lemma 4.1]{Hara98}}]\label{cohomology comparison}
Let $D$ be a Weil divisor on a normal variety $X$ over a field $k$ of characteristic zero. 
Given a model $(X_A, D_A)$ of $(X, D)$ over a finitely generated $\Z$-subalgebra $A$ of $k$, 
one has isomorphisms 
\begin{align*}
H^i(X_A, \sO_{X_A}(D_A)) \otimes_A k & \cong H^i(X, \sO_X(D)), \\
H^i(X_A, \sO_{X_A}(D_A)) \otimes_A \kappa(\mu) & \cong H^i(X_{\mu}, \sO_{X_{\mu}}(D_{\mu})) 
\end{align*}
for general closed points $\mu \in \Spec A$ and for all $i \ge 0$. 
\end{lem}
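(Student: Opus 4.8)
The plan is to reduce everything to flat base change for cohomology of a proper morphism. First I would observe that after enlarging $A$ by inverting a single nonzero element, I may assume that $X_A \to \Spec A$ is proper (take a model of a projective compactification, or use that $X$ proper implies $X_A$ proper after shrinking $\Spec A$ — in the arithmetic setting the relevant varieties are projective), that $\sO_{X_A}(D_A)$ is a coherent sheaf on $X_A$ which is flat over $A$, and that its formation commutes with base change to the generic fiber, i.e.\ $\sO_{X_A}(D_A) \otimes_A k \cong \sO_X(D)$. The latter holds because $D_A$ is a Weil divisor restricting to $D$ on the generic fiber, $X_A$ is normal, and reflexive hulls are compatible with the flat (in fact smooth, after shrinking) base change $X \to X_A$; after shrinking, $\sO_{X_A}(D_A)$ is even a line bundle on the locus where it matters, but in any case one uses that $\mathcal{H}\!om$ into $\sO$ commutes with flat base change.

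The first displayed isomorphism $H^i(X_A, \sO_{X_A}(D_A)) \otimes_A k \cong H^i(X, \sO_X(D))$ then follows from flat base change along the flat morphism $\Spec k \to \Spec A$: since $X_A \to \Spec A$ is proper (hence separated and of finite type) and $\sO_{X_A}(D_A)$ is coherent, cohomology commutes with the flat base change $\otimes_A k$, combined with the identification of the pulled-back sheaf with $\sO_X(D)$ from the previous paragraph. No generic-point restriction is needed here.

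For the second isomorphism, the point is generic freeness and cohomology-and-base-change. By generic freeness applied to the finitely generated $A$-modules $H^i(X_A, \sO_{X_A}(D_A))$ (these are finitely generated because $X_A \to \Spec A$ is proper and the sheaf is coherent), after inverting one more element of $A$ I may assume all of them are free $A$-modules, for every $i$ simultaneously (only finitely many are nonzero). Since each $H^i$ is free, the Grothendieck cohomology-and-base-change criterion shows that for \emph{every} point $\mu \in \Spec A$ the natural map $H^i(X_A, \sO_{X_A}(D_A)) \otimes_A \kappa(\mu) \to H^i(X_\mu, \sO_{X_\mu}(D_\mu))$ is an isomorphism — using freeness of $H^{i}$ and $H^{i+1}$ to get surjectivity at step $i$, and iterating. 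Here one also needs, for general closed $\mu$, that $\sO_{X_A}(D_A) \otimes_A \kappa(\mu) \cong \sO_{X_\mu}(D_\mu)$ as sheaves on $X_\mu$; this again follows after shrinking $\Spec A$ from compatibility of the reflexive sheaf $\sO_{X_A}(D_A)$ with restriction to the (normal, after shrinking) fiber $X_\mu$, exactly as in the construction of the model of $(X,D)$ recalled above.

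The main obstacle, and the only genuinely non-formal point, is the compatibility of $\sO_{X_A}(D_A)$ with base change along $\Spec k \to \Spec A$ and along the fibers $\Spec \kappa(\mu) \to \Spec A$: a priori $\sO_{X_A}(D_A)$ is defined as a reflexive sheaf (the $\mathcal{O}_{X_A}$-module of rational functions with poles bounded by $D_A$), and reflexive hulls do not commute with arbitrary base change. The resolution is that one only needs the statement \emph{generically} over $\Spec A$: after inverting finitely many elements one can arrange that $D_A$ is $\Q$-Cartier (indeed, in the relevant applications, that $\sO_{X_A}(D_A)$ is a line bundle near the generic fiber, or that the singular locus behaves well in families), so that the sheaf is a line bundle in the relevant codimensions and base change is automatic; alternatively, one invokes that the formation of $\sO_X(D) = j_* \sO_{X\setminus Z}(D)$ for $Z$ of codimension $\ge 2$ commutes with the smooth (after shrinking) morphisms $X \to X_A$ and $X_\mu \to X_A$ by [Mustata-Srinivas11]-type arguments. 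Once this compatibility is in hand for general $\mu$, the rest is the standard flat-base-change / generic-freeness / cohomology-and-base-change package.
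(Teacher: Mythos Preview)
The paper does not supply its own proof of this lemma; it is stated with a bare citation to \cite[Lemma 4.1]{Hara98}. Your argument---flat base change along $\Spec k \to \Spec A$ for the first isomorphism, and generic freeness of the finitely many nonzero $H^i(X_A,\sO_{X_A}(D_A))$ together with Grothendieck's cohomology-and-base-change theorem for the second, after shrinking $\Spec A$ so that $\sO_{X_A}(D_A)$ is $A$-flat and its formation commutes with restriction to fibers---is the standard proof of such statements and is correct. You also correctly isolate the one non-formal point, namely that the reflexive sheaf $\sO_{X_A}(D_A)$ need not a priori commute with base change, and your resolution (shrink $\Spec A$ so that the relevant morphisms are smooth, or so that the sheaf is locally free in codimension one and the singular locus is flat over $A$) is the right one.
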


Intersection numbers are preserved under reduction modulo $p$. 
We frequently use this well-known fact in this paper. 
\begin{lem}[\textup{cf.~\cite[Example 20.3.3]{FultonBook}}]\label{intersection}
Let $X$ be a projective variety over a field $k$ of characteristic zero, $Z \subset X$ be a closed subscheme of dimension $d$ and $D_1, \dots, D_d$ be $\Q$-Cartier $\Q$-Weil divisors on $X$.  
Given a model $(X_A, D_A)$ of $(X, D)$ over a finitely generated $\Z$-subalgebra $A$ of $k$, one has 
\[
D_1 \cdots D_d \cdot Z=D_{1,\mu} \cdots D_{d, \mu} \cdot Z_{\mu}
\]
for general closed points $\mu \in \Spec A$. 
\end{lem}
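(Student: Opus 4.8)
The plan is to express both intersection numbers purely in terms of Euler characteristics of line bundles and then invoke the flatness of the model, so that the statement collapses to the constancy of Euler characteristics in flat proper families. A preliminary reduction lets me assume that all the $D_i$ are Cartier: choosing a positive integer $n$ with $nD_i$ Cartier for every $i$ and enlarging $A$ so that $nD_{i,A}$ is Cartier on $X_A$ and $nD_{i,\mu}$ is Cartier on $X_\mu$ for general $\mu$, the definition of the $\Q$-Cartier intersection number gives $D_1\cdots D_d\cdot Z=n^{-d}(nD_1)\cdots(nD_d)\cdot Z$, and the analogous identity holds on $X_\mu$; so it suffices to treat Cartier divisors.

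Assume now that $D_1,\dots,D_d$ are Cartier. By Snapper's lemma the function $(m_1,\dots,m_d)\mapsto\chi\bigl(Z,\sO_Z(\sum_i m_iD_i)\bigr)$ agrees with a polynomial of total degree at most $\dim Z=d$, and the intersection number is recovered as its $d$-fold mixed finite difference,
\[
D_1\cdots D_d\cdot Z=\sum_{S\subseteq\{1,\dots,d\}}(-1)^{d-|S|}\,\chi\Bigl(Z,\sO_Z\bigl({\textstyle\sum_{i\in S}}D_i\bigr)\Bigr),
\]
and the same formula holds verbatim for $D_{1,\mu},\dots,D_{d,\mu}$ on $Z_\mu$ for general $\mu$, since then they are Cartier on $X_\mu$ and $\dim Z_\mu=d$. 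Hence it suffices to prove, for each subset $S\subseteq\{1,\dots,d\}$ and general closed $\mu\in\Spec A$, that
\[
\chi\bigl(Z,\sO_Z({\textstyle\sum_{i\in S}}D_i)\bigr)=\chi\bigl(Z_\mu,\sO_{Z_\mu}({\textstyle\sum_{i\in S}}D_{i,\mu})\bigr).
\]

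To this end I would enlarge $A$ once more so that $X_A$ and $Z_A$ are flat and projective over $\Spec A$, each $D_{i,A}$ is Cartier on $X_A$, and $\sO_{X_A}(D_{i,A})$ is a line bundle whose base changes to $X$ and to $X_\mu$ (for general $\mu$) are $\sO_X(D_i)$ and $\sO_{X_\mu}(D_{i,\mu})$. Then, writing $\mathcal{L}_{S,A}:=\bigl(\bigotimes_{i\in S}\sO_{X_A}(D_{i,A})\bigr)\big|_{Z_A}$, its base changes to $Z$ and to $Z_\mu$ are $\sO_Z(\sum_{i\in S}D_i)$ and $\sO_{Z_\mu}(\sum_{i\in S}D_{i,\mu})$, and $\mathcal{L}_{S,A}$ is invertible on $Z_A$, hence flat over $A$ because $Z_A$ is. By the constancy of Euler characteristics in the flat proper family $Z_A\to\Spec A$, the function $\mu\mapsto\chi\bigl(Z_\mu,\mathcal{L}_{S,A}|_{Z_\mu}\bigr)$ is locally constant, hence constant since $\Spec A$ is connected, with common value the Euler characteristic of the fiber over the generic point of $\Spec A$; and flat base change along $\operatorname{Frac}(A)\hookrightarrow k$ identifies the cohomology of that generic fiber with that of $\sO_Z(\sum_{i\in S}D_i)$. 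Combining these identifications with the two previous paragraphs proves the lemma.

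I do not expect a genuine obstacle here: the argument is essentially bookkeeping over finitely many harmless enlargements of $A$, together with the standard facts that Euler characteristics are constant in flat proper families and compatible with flat base change. The only point requiring a little care is that $Z$ is allowed to be reducible and non-reduced, which is exactly why the intersection numbers must be accessed through Euler characteristics of line bundles (Snapper's lemma) rather than through degrees on a curve obtained by slicing; once that is granted, nothing in the argument is delicate. (Alternatively one could appeal directly to the compatibility of refined intersection products with flat specialization, as in \cite[Example~20.3.3]{FultonBook}; the route sketched above is simply more self-contained.)
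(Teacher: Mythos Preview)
Your argument is correct and follows a genuinely different route from the paper. The paper first reduces to Cartier divisors (as you do), then reduces to the case where $A$ is a Dedekind domain by an argument taken from \cite[Proposition~5.3]{Mustata-Srinivas11}, and finally invokes the compatibility of intersection numbers with specialization over a regular one-dimensional base from \cite[Example~20.3.3]{FultonBook}. You instead bypass both external steps by rewriting the intersection number via Snapper's formula as an alternating sum of Euler characteristics and then appealing to the constancy of Euler characteristics in the flat proper family $Z_A\to\Spec A$ (using that $A$, being a subring of the field $k$, is a domain, so $\Spec A$ is connected). Your approach is more self-contained and avoids Fulton's refined intersection theory altogether; the paper's approach is terser on the page but leans on two nontrivial references. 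Either is perfectly adequate here, and indeed your last paragraph already notes the Fulton route as an alternative.
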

\begin{proof}
We may assume that all $D_i$ are Cartier. 
By an argument similar to that in the proof of \cite[Proposition 5.3]{Mustata-Srinivas11}, the problem can be reduced to the case where $A$ is a Dedekind domain. 
The assertion then follows from \cite[Example 20.3.3]{FultonBook}. 
\end{proof}

\subsection{Varieties of globally $F$-regular type}
In this subsection, we recall the definition and basic properties of varieties of globally $F$-regular type. 

\begin{defn}[\cite{Smith00}]
Let $X$ be a projective variety over an $F$-finite field of characteristic $p>0$. We say that $X$ is \textit{globally $F$-regular} if for every effective Cartier divisor $D$ on $X$, there exists an integer $e \ge 1$ such that the composition map 
\[
\sO_X \to F^e_*\sO_X \hookrightarrow F^e_*\sO_X(D)
\]
splits as an $\sO_X$-module homomorphism, where $\sO_X \to F^e_*\sO_X$ is the $e$-times iterated Frobenius morphism and $F^e_*\sO_X \hookrightarrow F^e_*\sO_X(D)$ is the $e$-times iterated Frobenius pushforward of the natural inclusion $\sO_X \hookrightarrow \sO_X(D)$. 
\end{defn}

\begin{prop}[\textup{\cite[Theorem 4.3]{SS10}}]\label{rem:GFR}
Let $X$ be a projective variety over an $F$-finite field of characteristic $p>0$. 
If $X$ is globally $F$-regular, then $X$ is of Fano type, that is, there exists an effective $\mathbb{Q}$-Weil divisor $\Delta$ on $X$ such that $-(K_X+\Delta)$ is ample $\Q$-Cartier and $(X,\Delta)$ is klt. 
In particular, $-K_X$ is big. 
\end{prop}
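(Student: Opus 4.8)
The plan is to follow the original argument of Schwede and Smith: use a Frobenius splitting twisted by an ample divisor to manufacture the boundary $\Delta$, check that the resulting pair is again globally (hence strongly) $F$-regular, and then deduce that it is klt from the correspondence between strong $F$-regularity and klt singularities recalled in the introduction. First I would set up a duality dictionary. Since $X$ is globally $F$-regular, each of its local rings is strongly $F$-regular, so $X$ is normal and $K_X$ is a well-defined Weil divisor; and since $X$ lies over an $F$-finite field, Grothendieck duality for the finite morphism $F^e\colon X\to X$ gives, for every Weil divisor $G$, a natural isomorphism
\[
\Hom_{\sO_X}\!\big(F^e_*\sO_X(G),\sO_X\big)\;\cong\;H^0\!\big(X,\sO_X((1-p^e)K_X-G)\big).
\]
Under it, a splitting of $\sO_X\hookrightarrow F^e_*\sO_X(G)$ yields an effective divisor $D\sim(1-p^e)K_X-G$, i.e.\ it exhibits $-(p^e-1)K_X$ as linearly equivalent to $G+D$ with $D$ effective; conversely, once $G$ is sufficiently ample an effective such $D$ produces an honest splitting, using global $F$-regularity of $X$.

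Next I would construct $\Delta$. Fix an effective ample Cartier divisor $A$ on $X$ (replace a very ample divisor by a linearly equivalent member of its linear system if necessary). Applying the definition of global $F$-regularity to $A$ produces an integer $e\ge 1$ and a splitting $\sigma\colon F^e_*\sO_X(A)\to\sO_X$, hence, by the dictionary above, an effective divisor $D_1\sim(1-p^e)K_X-A$. Put $\Delta:=\tfrac{1}{p^e-1}D_1\ge 0$. Then $-(p^e-1)(K_X+\Delta)=-(p^e-1)K_X-D_1\sim A$, so $K_X+\Delta$ is $\Q$-Cartier and $-(K_X+\Delta)\sim_{\Q}\tfrac{1}{p^e-1}A$ is ample $\Q$-Cartier.

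It then remains to show that the pair $(X,\Delta)$ is globally $F$-regular. Iterating $\sigma$ with its Frobenius pushforwards shows that $\sO_X\hookrightarrow F^{ne}_*\sO_X\big(\tfrac{p^{ne}-1}{p^e-1}A\big)$ splits for every $n\ge 1$. Given an arbitrary effective divisor $C$, note that $(p^{ne}-1)\Delta=\tfrac{p^{ne}-1}{p^e-1}D_1$ is an integral divisor and that, for $n\gg 0$,
\[
(1-p^{ne})K_X-(p^{ne}-1)\Delta-C=-(p^{ne}-1)(K_X+\Delta)-C\sim\tfrac{p^{ne}-1}{p^e-1}A-C
\]
is linearly equivalent to an effective divisor, because $A$ is ample. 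Feeding this back through the duality dictionary (and invoking global $F$-regularity of $X$ for the non-degeneracy needed to get an actual splitting) yields a splitting of $\sO_X\hookrightarrow F^{ne}_*\sO_X\big(\lceil(p^{ne}-1)\Delta\rceil+C\big)$, which is exactly the defining condition of global $F$-regularity of $(X,\Delta)$. A globally $F$-regular pair is strongly $F$-regular, and since $K_X+\Delta$ is $\Q$-Cartier, the correspondence between strong $F$-regularity and klt singularities recalled in the introduction shows that $(X,\Delta)$ is klt; thus $X$ is of Fano type. The last assertion is then immediate, since $-K_X\sim_{\Q}-(K_X+\Delta)+\Delta$ is the sum of an ample $\Q$-Cartier divisor and an effective divisor, hence big.

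I expect the third step to be the main obstacle: turning a single twisted Frobenius splitting into global $F$-regularity of the pair $(X,\Delta)$ requires keeping track of the round-ups $\lceil(p^{ne}-1)\Delta\rceil$ along the iteration and verifying that the ample twist $\tfrac{p^{ne}-1}{p^e-1}A$ leaves enough room to split off an arbitrary effective $C$ for $n\gg 0$. This is precisely where the full strength of global $F$-regularity of $X$, rather than mere Frobenius splitness, is used.
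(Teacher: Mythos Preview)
The paper does not give its own proof of this proposition: it is stated as a citation of \cite[Theorem 4.3]{SS10} and used as a black box throughout. So there is no in-paper argument to compare against.

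That said, your proposal is a faithful outline of the Schwede--Smith proof: produce $\Delta$ from a Frobenius splitting twisted by an ample divisor via Grothendieck duality, then iterate that splitting to verify global $F$-regularity of the pair $(X,\Delta)$. Two small points. First, in the last paragraph you appeal to ``the correspondence between strong $F$-regularity and klt singularities recalled in the introduction''; note that the introduction phrases this correspondence for varieties in characteristic zero via reduction modulo $p$, whereas here you need the direct positive-characteristic implication ``strongly $F$-regular $\Rightarrow$ klt'' for $\Q$-Gorenstein pairs, which is \cite{Hara-Watanabe}. Second, as you yourself anticipate, the passage ``feeding this back through the duality dictionary \dots\ yields a splitting'' is the delicate step: an effective divisor in the right linear system only gives a nonzero $\sO_X$-linear map $F^{ne}_*\sO_X(\lceil(p^{ne}-1)\Delta\rceil+C)\to\sO_X$, not automatically a splitting. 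The clean way around this is to observe that the iterated map $\sigma^n$ itself corresponds, under duality, precisely to the divisor $(p^{ne}-1)\Delta$, so that $\sigma^n$ already factors through $F^{ne}_*\sO_X\big((p^{ne}-1)\Delta\big)$ and remains a splitting; one then precomposes with the natural inclusion coming from the effective divisor $\tfrac{p^{ne}-1}{p^e-1}A-C$ (for $n\gg 0$) rather than trying to build a new map from scratch.
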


\begin{rem}
The construction of $\Delta$ in Proposition \ref{rem:GFR} heavily depends on the characteristic $p$. 
Therefore, Proposition \ref{rem:GFR} does not settle Conjecture \ref{Introconj}. 
\end{rem}

\begin{defn}
    Let $X$ be a projective variety over a field $k$ of characteristic zero.
    We say that $X$ is \textit{of globally $F$-regular type} if for a model $X_A$ of $X$ over a finitely generated $\mathbb{Z}$-subalgebra $A$ of $k$, there exists a dense open subset $U \subset \Spec A$ such that $X_{\mu}$ is globally $F$-regular for all closed points $\mu\in U$.
\end{defn}

We list basic properties of varieties of globally $F$-regular type that we  need later. 

\begin{prop}\label{thm:GFRtype to RC}\label{rem:GFRtype}
Suppose that $X$ is a projective variety over a field of characteristic zero. 
\begin{enumerate}[label=$(\arabic*)$]
\item $($\cite[Theorem 5.1]{SS10}$)$ If $X$ is of Fano type, then $X$ is of globally $F$-regular type.  
\item $($\cite[Theorem 3.9]{Hara-Watanabe}$)$ If $X$ is $\Q$-Gorenstein of globally $F$-regular type, then $X$ has only klt singularities. 
\item $($\cite[Theorem 5.8]{GLP$^+$15}$)$ If $k$ is algebraically closed and $X$ is $\Q$-Gorenstein of globally $F$-regular type, then $X$ is rationally connected. 
\end{enumerate}
\end{prop}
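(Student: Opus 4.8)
The three assertions are all established in the cited references, and I would prove them in the order stated, since each rests on the reduction-modulo-$p$ dictionary between $F$-singularities and the singularities of the minimal model program, and the output of part $(2)$ feeds directly into part $(3)$.

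\textbf{Part (1).} Since $X$ is of Fano type, fix an effective $\Q$-Weil divisor $\Delta$ with $(X,\Delta)$ klt and $-(K_X+\Delta)$ ample $\Q$-Cartier. I would pass to a model $(X_A,\Delta_A)$ over a finitely generated $\Z$-subalgebra $A$ of $k$, enlarging $A$ so that $X_A$ is normal, $K_{X_A}+\Delta_A$ is $\Q$-Cartier with $-(K_{X_A}+\Delta_A)$ ample, and the klt condition reduces compatibly. The heart of the argument, following Schwede--Smith and relying on Takagi \cite{Takagi}, is the comparison between the multiplier ideal $\mathcal{J}(X,\Delta)$ and the (big) test ideal: for general $\mu$ one has $\tau(X_\mu,\Delta_\mu)=\sO_{X_\mu}$ precisely because $(X,\Delta)$ is klt, so the pair $(X_\mu,\Delta_\mu)$ is strongly $F$-regular. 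Using the ampleness of $-(K_{X_\mu}+\Delta_\mu)$ to absorb an arbitrary effective Cartier divisor into a multiple of the anticanonical class after sufficiently many Frobenius iterations, one upgrades this local statement to global $F$-regularity of the pair $(X_\mu,\Delta_\mu)$. Finally, since $\Delta_\mu\ge 0$, restricting a splitting of $\sO_{X_\mu}\to F^e_*\sO_{X_\mu}(\lceil (p^e-1)\Delta_\mu\rceil+D)$ to the subsheaf $F^e_*\sO_{X_\mu}(D)$ yields a splitting of $\sO_{X_\mu}\to F^e_*\sO_{X_\mu}(D)$; hence $X_\mu$ itself is globally $F$-regular for general $\mu$, i.e.\ $X$ is of globally $F$-regular type.

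\textbf{Part (2).} Global $F$-regularity is in particular a local condition: if $X_\mu$ is globally $F$-regular then each local ring $\sO_{X_\mu,x}$ is strongly $F$-regular. Thus the $\Q$-Gorenstein hypothesis together with global $F$-regular type shows that every local ring of $X$ is of strongly $F$-regular type. I would then invoke the local correspondence of Hara and Watanabe \cite[Theorem 3.9]{Hara-Watanabe}: for a normal $\Q$-Gorenstein variety over a field of characteristic zero, the equality $\tau(X_\mu)=\sO_{X_\mu}$ for general $\mu$ forces $\mathcal{J}(X)=\sO_X$, which is exactly the klt condition. The only point requiring care is fixing a common $\Q$-Cartier index for $K_X$ in the model, so that the test-ideal/multiplier-ideal comparison applies uniformly after reduction.

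\textbf{Part (3).} This is the most delicate part. By $(2)$ the variety $X$ is klt, hence has rational singularities, and by Proposition \ref{rem:GFR} each reduction $X_\mu$ is of Fano type with $-K_{X_\mu}$ big. Rational connectedness is not directly visible in characteristic zero, so I would argue by reduction: first show that the globally $F$-regular fibers $X_\mu$ are rationally chain connected---using that a globally $F$-regular (log Fano type) variety carries enough rational curves through general points, produced by bend-and-break together with the positivity of the anticanonical class and controlled by the Frobenius-split structure---and then spread these chains out over the family $X_A\to\Spec A$ to conclude that $X$ itself is rationally chain connected. Since $X$ is klt, rational chain connectedness upgrades to rational connectedness by the theorem of Hacon--McKernan. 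The main obstacle, and the technical core handled in \cite[Theorem 5.8]{GLP$^+$15}, is exactly the transition from positive to zero characteristic: rational chain connectedness behaves badly under specialization in the presence of inseparability, so one must control the degrees and separability of the rational curves in the fibers carefully and ensure that the limiting chains survive in the generic fiber.
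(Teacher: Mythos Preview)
The paper itself supplies no proof of this proposition: the three statements are simply recorded with citations to the original references and nothing further. Your sketches therefore go well beyond what the paper provides. As outlines of the cited arguments they are broadly accurate; parts (1) and (2) match the standard proofs closely, while your description of (3) captures the overall shape of the argument in \cite{GLP$^+$15}---reduce modulo $p$, use the Fano-type structure on the fibers to produce rational curves, pass back to characteristic zero, and upgrade chain connectedness to rational connectedness via Hacon--McKernan on the klt variety---but is, as you acknowledge, vague on the lifting step, which is indeed the technical heart of that reference.
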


Being of globally $F$-regular type is preserved under crepant birational morphisms and small birational maps.  

\begin{lem}[\textup{\cite[Corollary 6.4]{SS10}, \cite[Lemma 2.7]{GT}, \cite[Lemma 2.14]{GOST}}]\label{lem:crepant and GFR}\label{lem:small and GFR}
Let $\phi\colon X\dasharrow X'$ be a birational map between normal projective varieties over a field $k$ of characteristic zero. 
\begin{enumerate}[label=$(\arabic*)$]
\item 
If $\phi$ is a morphism and $X$ is of globally $F$-regular type, then $X'$ is also of globally $F$-regular type. 
\item 
Suppose that one of the following conditions holds: 
\begin{enumerate}[label=\textup{(\roman*)}]
\item 
$\phi$ is a morphism, $X'$ is $\Q$-Gorenstein, and $K_X=\phi^{*}K_{X'}$. 
\item 
$\phi$ is small. 
\end{enumerate}
Then $X$ is of globally $F$-regular type if and only if so is $X'$.
\end{enumerate}
\end{lem}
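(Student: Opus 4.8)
First, I would transfer the whole statement to positive characteristic by reduction modulo $p$, and there invoke the birational invariance of global $F$-regularity — in the setting of pairs — under crepant proper birational morphisms and under small birational maps.

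\textbf{Reduction modulo $p$.}
I would fix a common model over a finitely generated $\Z$-subalgebra $A$ of $k$ for $X$, $X'$, the map $\phi$, the divisors involved, and the linear equivalence witnessing $K_X=\phi^{*}K_{X'}$. After shrinking $\Spec A$ we may assume $X_A$ and $X'_A$ are normal and flat over $A$, that $\phi_A$ is a morphism (resp.\ a small birational map) when $\phi$ is, that $X'_A$ is $\Q$-Gorenstein when $X'$ is, and that all the data spread out; hence, for general closed points $\mu\in\Spec A$, the map $\phi_{\mu}\colon X_{\mu}\dasharrow X'_{\mu}$ is a birational map of normal projective varieties over the finite field $\kappa(\mu)$ with the same extra properties (being a morphism, being small, crepancy, $\Q$-Gorensteinness of the target). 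By definition of ``of globally $F$-regular type'', after shrinking $\Spec A$ once more, one of $X_{\mu}$, $X'_{\mu}$ is globally $F$-regular for every closed $\mu$ in a dense open set, and one must deduce the same for the other. Thus it suffices to prove, over an $F$-finite field of characteristic $p>0$: (a) if $\phi\colon Y\to Z$ is a proper birational morphism of normal projective varieties and $Y$ is globally $F$-regular, then so is $Z$; (b) if in addition $Z$ is $\Q$-Gorenstein and $K_Y=\phi^{*}K_Z$, then $Y$ is globally $F$-regular whenever $Z$ is; and (c) if $\psi\colon Y\dasharrow Z$ is a small birational map of normal projective varieties, then $Y$ is globally $F$-regular if and only if $Z$ is. These yield, respectively, part (1); the two directions of part (2)(i), the forward one being (a) again; and part (2)(ii).

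\textbf{The push-forward direction (a).}
Let $D$ be an effective Cartier divisor on $Z$, so that $\phi^{*}D$ is an effective Cartier divisor on $Y$; there exist $e\ge 1$ and an $\sO_Y$-splitting of the composite $\sO_Y\to F^e_*\sO_Y\hookrightarrow F^e_*\sO_Y(\phi^{*}D)$. Applying $\phi_*$ and using that $\phi_*\sO_Y=\sO_Z$ ($Z$ normal, $\phi$ proper birational), that $\phi_*$ commutes with $F^e_*$ (since $\phi$ commutes with the absolute Frobenius), and the projection formula $\phi_*F^e_*\sO_Y(\phi^{*}D)=F^e_*\sO_Z(D)$, one sees that the push-forward of this composite is exactly $\sO_Z\to F^e_*\sO_Z\hookrightarrow F^e_*\sO_Z(D)$; since $\phi_*$ carries split injections to split injections, it splits. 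Hence $Z$ is globally $F$-regular, which in particular settles the forward implication in (2)(i).

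\textbf{The crepant case (b), the small case (c), and the main obstacle.}
For (b) I would use the positive-characteristic ``$F$-Fano'' characterization refining Proposition \ref{rem:GFR} (from \cite{SS10}): $Z$ is globally $F$-regular if and only if there is an effective $\Q$-divisor $\Delta_Z\ge 0$ with $-(K_Z+\Delta_Z)$ ample — equivalently nef and big, via Kodaira's lemma and the stability of the globally $F$-regular pair condition under small effective perturbations — such that $(Z,\Delta_Z)$ is a globally $F$-regular pair. Setting $\Delta_Y:=\phi^{*}\Delta_Z\ge 0$, crepancy gives $K_Y+\Delta_Y=\phi^{*}(K_Z+\Delta_Z)$, so $-(K_Y+\Delta_Y)=\phi^{*}(-(K_Z+\Delta_Z))$ is nef and big; then the birational invariance of the globally $F$-regular pair condition under crepant proper birational morphisms — the pair version of (a)--(b), whose proof rests on Grothendieck duality and the compatibility of Frobenius splittings with the trace $\phi_*\omega_Y\to\omega_Z$, an isomorphism by crepancy — shows $(Y,\Delta_Y)$ is a globally $F$-regular pair, hence $Y$ is globally $F$-regular. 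For (c), a small birational map induces an isomorphism $\Cl(Y)\xrightarrow{\ \sim\ }\Cl(Z)$ identifying $\sO_Y(D)$ with $\sO_Z(\psi_*D)$ together with their spaces of sections and matching $K_Y$ with $K_Z$; since global $F$-regularity is detected by splittings of maps between such reflexive sheaves and is therefore unaffected by modifications in codimension $\ge 2$, the relevant data transports across $\psi$ in both directions. The real work lies in the positive-characteristic invariance of globally $F$-regular pairs under crepant (resp.\ small) birational maps — \cite[Corollary 6.4]{SS10}, \cite[Lemma 2.7]{GT}, \cite[Lemma 2.14]{GOST} — and I expect that, rather than the routine reduction-modulo-$p$ bookkeeping of the first step, to be the main obstacle.
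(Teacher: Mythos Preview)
The paper gives no proof of this lemma at all: it is stated with the three citations \cite[Corollary 6.4]{SS10}, \cite[Lemma 2.7]{GT}, \cite[Lemma 2.14]{GOST} and nothing more. Your proposal is a correct unpacking of what lies behind those citations: you spread out the data to a model over a finitely generated $\Z$-algebra, reduce the question to the corresponding positive-characteristic statements, give the standard push-forward argument for (1), and then defer the substantive content of (2)(i) and (2)(ii) to precisely the same references the paper cites. Your closing paragraph is honest about this: the reduction-modulo-$p$ bookkeeping is routine, and the real input is the positive-characteristic invariance of global $F$-regularity (for pairs) under crepant and small birational maps, which is exactly what \cite{SS10}, \cite{GT}, \cite{GOST} supply. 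So your approach and the paper's are the same; you have simply written out the connective tissue that the paper leaves implicit.

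One small comment on your sketch of (c): the transport of splittings under a small map is cleanest if you phrase global $F$-regularity via the $H^0$ of the reflexive sheaf $F^e_*\sO_Y((1-p^e)K_Y-D)$ (Grothendieck duality for $F^e$), since these global sections are computed on any open set with complement of codimension $\ge 2$ and hence are literally identified across $\psi$. Your formulation in terms of $\Cl(Y)\cong\Cl(Z)$ and matching sections of reflexive sheaves is pointing at this, but as written it elides the step that the \emph{splitting condition} (not just the sheaves) transports; making the duality explicit closes that gap.
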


\subsection{Riemann--Roch theorem}
We will use the following version of Riemann-Roch theorem for singular varieties in the proof of Theorem \ref{Introthm}. 
\begin{thm}
    Let $X$ be a Gorenstein terminal projective threefold over an algebraically closed field of characteristic zero and let $D$ be a Cartier divisor on $X$.
    Then the following hold:
    \begin{align*}
        \chi(X,\sO_X(D))&=\frac{1}{12}D\cdot (D-K_X)\cdot (2D-K_X)+\frac{1}{12}D \cdot c_2(X)+\chi(X,\sO_X)\,\,\text{and}\,\, \\
        \chi(X,\sO_X)&=\frac{1}{24}(-K_X) \cdot c_2(X).
    \end{align*}
    In particular, we have
    \begin{equation}\label{RR for -K}
        \chi(X,\sO_X(-K_X))=\frac{1}{2}(-K_X)^3+3\chi(X,\sO_X).
    \end{equation}
\end{thm}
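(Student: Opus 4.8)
The plan is to reduce everything to the classical Hirzebruch--Riemann--Roch theorem on a resolution. When $X$ is smooth the two displayed identities are just HRR reorganized: expanding the degree-three part of $\mathrm{ch}(\sO_X(D))\cdot\mathrm{td}(X)$ and substituting $c_1(T_X)=-K_X$ gives $\chi(\sO_X(D))=\tfrac16 D^3-\tfrac14 D^2\cdot K_X+\tfrac1{12}D\cdot(K_X^2+c_2(X))-\tfrac1{24}K_X\cdot c_2(X)$, which rearranges into the first formula once one records the case $L=\sO_X$ of HRR, namely $\chi(\sO_X)=\tfrac1{24}(-K_X)\cdot c_2(X)$. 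The ``in particular'' is then the one-line substitution $D=-K_X$: there $(D-K_X)(2D-K_X)=(-2K_X)(-3K_X)=6K_X^2$, so $\tfrac1{12}D\cdot(D-K_X)(2D-K_X)=\tfrac12(-K_X)^3$, while $\tfrac1{12}D\cdot c_2(X)=2\chi(\sO_X)$ by the second identity.

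For a general Gorenstein terminal $X$ I would first note that $\mathrm{Sing}(X)$ is finite (terminality forces $\codim_X\mathrm{Sing}(X)\ge 3$) and that $X$ has rational (Gorenstein) singularities, so that $f_*\sO_Y=\sO_X$ and $R^if_*\sO_Y=0$ for $i>0$ for a resolution $f\colon Y\to X$ chosen to be an isomorphism over the smooth locus $X_{\mathrm{sm}}$. By the projection formula $\chi(X,\sO_X(D))=\chi(Y,\sO_Y(f^*D))$, so it suffices to apply the smooth formula to $f^*D$ on $Y$ and push everything down. One defines $c_2(X)\in A_1(X)$ to be the unique class restricting to $c_2(T_{X_{\mathrm{sm}}})$; this is legitimate because $A_1(X)\xrightarrow{\ \sim\ }A_1(X_{\mathrm{sm}})$ (the complement being $0$-dimensional), and then $f_*c_2(Y)=c_2(X)$. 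The terms of the smooth formula on $Y$ built only from $f^*D$, $f^*K_X$ and $c_2(Y)$ now descend verbatim by the projection formula: $(f^*D)^3=D^3$, $f^*D\cdot c_2(Y)=D\cdot f_*c_2(Y)=D\cdot c_2(X)$, and, since $f^*D$ restricts trivially to every $f$-exceptional prime divisor (each being contracted to a point), the $f$-exceptional part $E$ of $K_Y=f^*K_X+E$ contributes nothing to $(f^*D)^2\cdot K_Y$ or to $f^*D\cdot K_Y^2$, giving $(f^*D)^2\cdot K_Y=D^2\cdot K_X$ and $f^*D\cdot K_Y^2=D\cdot K_X^2$.

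The single remaining comparison --- and the step I expect to be the real obstacle --- is between $K_Y\cdot c_2(Y)$ and $K_X\cdot c_2(X)$: writing $K_Y=f^*K_X+E$ with $E$ an effective $f$-exceptional $\Q$-divisor (terminality), one has $K_Y\cdot c_2(Y)=K_X\cdot c_2(X)+E\cdot c_2(Y)$, so everything comes down to $E\cdot c_2(Y)=0$. This is a local statement at the singular points of $X$, each of which is a compound Du Val point, and it is precisely the vanishing of the contribution of a Gorenstein (index-one) terminal singularity in Reid's Riemann--Roch formula for threefolds with terminal singularities. I would establish it either by a direct computation of $\deg c_2(T_Y|_{E_i})$ on the exceptional divisors $E_i$ over each singular point (as one checks, for instance, for the blow-up of an ordinary double point), or by invoking Reid's Riemann--Roch on $X$, whose ``basket'' correction terms all vanish here because every local index equals $1$, and comparing it with the identity already obtained on $Y$. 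Granting $E\cdot c_2(Y)=0$, the smooth-case identities on $Y$ transfer verbatim to $X$, and the ``in particular'' follows exactly as before.
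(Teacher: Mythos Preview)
Your argument is correct, but it takes a considerably longer route than the paper. The paper simply cites Reid's singular Riemann--Roch for terminal threefolds \cite[Corollary 10.3]{Reid(Young)}: setting the index $r=1$ (Gorenstein) and $m=0$ there kills all basket contributions, and the displayed formulas drop out directly. You instead pass to a resolution $f\colon Y\to X$, apply smooth HRR upstairs, and push everything down; this works cleanly for all the terms involving $f^*D$ (your use of rational singularities, the projection formula, and the identification $f_*c_2(Y)=c_2(X)$ via $A_1(X)\cong A_1(X_{\mathrm{sm}})$ are all fine), and you correctly isolate $E\cdot c_2(Y)=0$ as the one nontrivial local input. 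Your option (b) for that step---invoking Reid's formula with vanishing basket---is exactly the citation the paper makes, so the resolution detour becomes redundant once you invoke it; your option (a), a case-by-case computation over cDV points, is only illustrated for the ordinary double point and would need more work to cover all cases. In short: the paper treats the statement as a direct specialization of a known theorem, whereas your write-up is closer to a sketch of why the Gorenstein case of that theorem holds.
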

\begin{proof}
    Taking $r=1$ and $m=0$ in \cite[Corollary 10.3]{Reid(Young)} yields the assertion. 
\end{proof}

%%%%%%%%%%%%%%%%%%%%%%%%%%%%%%%%%%%%%%%%%%%%%%%%%%%%%%%%%%%%%%%%%%%%%%%%%%%%%%%%%%%%%%%%%%%%%%%%%%%%%%%%%%%%%%%%%%%
\section{A vanishing theorem for varieties of globally ${F}$-regular type}\label{vanishing section}

In this section, we study a vanishing theorem for varieties of globally $F$-regular type. 

Smith \cite{Smith00} proved that the higher cohomology of a nef divisor on a globally $F$-regular variety vanishes. 

\begin{prop}[\textup{cf.~\cite[Theorem 4.2 (2), Corollary 4.3]{Smith00}}]\label{thm:nef vanishing for GFR}
    Let $X$ be a globally $F$-regular projective variety over an $F$-finite field of characteristic $p>0$ and $D$ be a $\Q$-Cartier Weil divisor on $X$. 
\begin{enumerate}[label=$(\arabic*)$]
    \item If there exists an effective Weil divisor $B$ on $X$ and an integer $n_0 \ge 1$ such that $H^i(X, \sO_X(nD+B))=0$ for all $n \ge n_0$, then $H^i(X, \sO_X(D))=0$. 
    \item If $D$ is nef, then $H^i(X,\sO_X(D))=0$ for all $i>0$.
\end{enumerate}
\end{prop}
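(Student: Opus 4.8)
The plan is to prove (1) directly from global $F$-regularity, and then deduce (2) from (1) via Fujita's vanishing theorem.

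For (1), I would use the ``divisorial'' form of global $F$-regularity (\cite[Theorem 4.2]{Smith00}; see also \cite{SS10}): for the given effective Weil divisor $B$ there is an integer $e\ge 1$, which may be taken arbitrarily large, such that $\sO_X\hookrightarrow F^e_*\sO_X(B)$ is a split injection of $\sO_X$-modules. Twisting such a splitting by the reflexive rank-one sheaf $\sO_X(D)$ and passing to reflexive hulls --- which is harmless, since $D$ is $\Q$-Cartier, so all sheaves in sight are reflexive of rank one and agree on the big open locus where $D$ is Cartier --- shows that the natural composite
\[
\sO_X(D)\ \longrightarrow\ F^e_*\sO_X(p^eD)\ \hookrightarrow\ F^e_*\sO_X(p^eD+B),
\]
in which the first arrow is the $p^e$-th power map and the second is $F^e_*$ of the inclusion coming from $B\ge 0$, is split; equivalently, $\sO_X(D)$ is an $\sO_X$-module direct summand of $F^e_*\sO_X(p^eD+B)$. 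Since $F^e$ is finite, $H^j(X,F^e_*\mathcal{G})\cong H^j(X,\mathcal{G})$ for all $j$ and all coherent $\mathcal{G}$, so taking cohomology of a direct-summand relation gives an injection $H^i(X,\sO_X(D))\hookrightarrow H^i(X,\sO_X(p^eD+B))$. Choosing $e$ with $p^e\ge n_0$, the target is zero by hypothesis, hence $H^i(X,\sO_X(D))=0$.

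For (2), I would construct a $B$ to which (1) applies, using Fujita's vanishing theorem. Fix $r\ge 1$ with $rD$ Cartier; since $D$ is nef, $L:=\sO_X(rD)$ and every power $L^{\otimes q}$ ($q\ge 0$) is a nef line bundle. Fix an effective ample Cartier divisor $H$ on $X$. For each $s\in\{0,1,\dots,r-1\}$, Fujita's vanishing theorem applied to the coherent sheaf $\sO_X(sD)$ and to $H$ yields an integer $m_s$ with $H^i(X,\sO_X(sD)\otimes\sO_X(mH)\otimes P)=0$ for all $i>0$, all $m\ge m_s$, and all nef line bundles $P$. Put $m_0:=\max_s m_s$ and $B:=m_0H$. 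For any $n\ge 0$, write $n=qr+s$ with $0\le s<r$; then $qrD+m_0H$ is Cartier and
\[
\sO_X(nD+B)\ \cong\ \sO_X(sD)\otimes\sO_X(m_0H)\otimes L^{\otimes q},
\]
so Fujita vanishing gives $H^i(X,\sO_X(nD+B))=0$ for all $i>0$ and all $n\ge 0$. Applying (1) with this $B$ (and $n_0=1$) yields $H^i(X,\sO_X(D))=0$ for all $i>0$.

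The main obstacle is confined to (1): correctly formulating and invoking the divisorial form of global $F$-regularity, and keeping careful track of reflexive hulls --- since $D$ is only $\Q$-Cartier, $\sO_X(nD)$ need not be invertible, so the projection formula must be applied on the open locus where $D$ is Cartier (whose complement has codimension at least two) and then extended by reflexivity. Once that is in place, the cohomological bookkeeping in (1) and the reduction in (2) --- essentially a repackaging of Fujita's theorem that also absorbs the fractional twist $\sO_X(sD)$ --- are routine.
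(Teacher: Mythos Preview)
Your proof is correct and follows exactly the standard argument from \cite{Smith00}; the paper itself does not supply a proof but simply records the statement with a citation to \cite[Theorem 4.2 (2), Corollary 4.3]{Smith00}. The splitting-plus-Frobenius-pushforward trick in (1) and the reduction of (2) to (1) via Fujita vanishing (absorbing the residues $sD$ into finitely many coherent sheaves) is precisely Smith's method, so there is nothing to contrast.
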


As an immediate corollary of Proposition \ref{thm:nef vanishing for GFR}, we have the following vanishing theorem for varieties of globally $F$-regular type. 
\begin{cor}\label{kodaira vanishing}
Let $X$ be a projective variety over a field $k$ of characteristic zero, and suppose that $X$ is of globally $F$-regular type.   
If $D$ is a semi-ample $\Q$-Cartier Weil divisor on $X$, then $H^i(X, \sO_X(D))=0$ for all $i>0$. 
\end{cor}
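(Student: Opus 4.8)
The statement to prove is Corollary \ref{kodaira vanishing}: if $X$ is a projective variety of globally $F$-regular type over a field $k$ of characteristic zero and $D$ is a semi-ample $\Q$-Cartier Weil divisor on $X$, then $H^i(X,\sO_X(D))=0$ for all $i>0$. The natural route is a standard spread-out argument combined with Proposition \ref{thm:nef vanishing for GFR}. First I would choose a model $(X_A,D_A)$ of $(X,D)$ over a finitely generated $\Z$-subalgebra $A$ of $k$. By the definition of globally $F$-regular type, after shrinking $\Spec A$ we may assume $X_\mu$ is globally $F$-regular for all closed points $\mu$ in a dense open $U\subseteq\Spec A$. By the discussion of reduction modulo $p$ in the Preliminaries, we may further assume $X_A$ is normal, $D_A$ is $\Q$-Cartier, and $D_\mu$ is a $\Q$-Cartier Weil divisor on the normal variety $X_\mu$ for all $\mu\in U$.

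**Semi-ampleness descends.** The key point is that semi-ampleness \emph{is} preserved under reduction modulo $p$ (unlike mere nefness) — this is exactly the contrast emphasized in the Introduction. Concretely, semi-ampleness of $D$ means there is an integer $m\ge 1$ with $mD$ Cartier and globally generated, i.e. the natural map $H^0(X,\sO_X(mD))\otimes\sO_X\to\sO_X(mD)$ is surjective. Spreading this out over $A$ and using Lemma \ref{cohomology comparison} to identify $H^0(X_A,\sO_{X_A}(mD_A))\otimes_A\kappa(\mu)$ with $H^0(X_\mu,\sO_{X_\mu}(mD_\mu))$, the cokernel of the evaluation map is a coherent sheaf on $X_A$ whose fiber over $\mu$ vanishes for general $\mu$; hence $mD_\mu$ is globally generated, so $D_\mu$ is semi-ample — in particular nef — for all $\mu$ in a dense open subset of $U$.

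**Applying the positive-characteristic vanishing.** For such $\mu$, $X_\mu$ is globally $F$-regular and $D_\mu$ is a nef $\Q$-Cartier Weil divisor, so Proposition \ref{thm:nef vanishing for GFR}(2) gives $H^i(X_\mu,\sO_{X_\mu}(D_\mu))=0$ for all $i>0$. By Lemma \ref{cohomology comparison}, $H^i(X_\mu,\sO_{X_\mu}(D_\mu))\cong H^i(X_A,\sO_{X_A}(D_A))\otimes_A\kappa(\mu)$ for general $\mu$, so $H^i(X_A,\sO_{X_A}(D_A))\otimes_A\kappa(\mu)=0$ for all $\mu$ in a dense set of closed points; since $H^i(X_A,\sO_{X_A}(D_A))$ is a finitely generated $A$-module, a standard generic-freeness/Nakayama argument (or simply the fact that a finitely generated module over a domain with all fibers at a dense set of maximal ideals vanishing must be torsion, and then shrinking $\Spec A$) forces $H^i(X_A,\sO_{X_A}(D_A))=0$ after shrinking $A$. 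Finally, $H^i(X,\sO_X(D))\cong H^i(X_A,\sO_{X_A}(D_A))\otimes_A k=0$, again by Lemma \ref{cohomology comparison}.

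**Main obstacle.** There is essentially no hard step here: everything reduces to bookkeeping about models and to quoting Proposition \ref{thm:nef vanishing for GFR}. The only point requiring a little care is making sure the chosen model simultaneously witnesses all the needed properties (normality, $\Q$-Cartierness of $D_A$, global $F$-regularity of the fibers, and semi-ampleness of $D_\mu$) on a common dense open subset of $\Spec A$; this is handled by repeatedly shrinking $\Spec A$ by localizing at a single nonzero element, which preserves density of the closed points and the isomorphisms in Lemma \ref{cohomology comparison}.
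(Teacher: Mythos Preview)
Your proposal is correct and follows essentially the same route as the paper's proof: spread out, note that semi-ampleness passes to general closed fibers, apply Proposition \ref{thm:nef vanishing for GFR}(2) on those fibers, and conclude via Lemma \ref{cohomology comparison}. You spell out the descent of semi-ampleness and the Nakayama/generic-freeness step in more detail than the paper does, but the argument is the same.
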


\begin{proof}
Suppose that $(X_A, D_A)$ is a model of $(X,D)$ over a finitely generated $\Z$-subalgebra $A$ of $k$, and consider the flat deformation $X_A \to \Spec A$. 
Since $D \cong D_A \otimes_A k$ is semi-ample, the restriction of $D_A$ to the generic fiber $X_A \otimes_A \mathrm{Frac}\, A$ is semi-ample. 
Then the restriction of $D_A$ to a general closed fiber $X_{\mu}$ is also semi-ample. 
It follows from Proposition \ref{thm:nef vanishing for GFR} (2) that $H^i(X_{\mu}, \sO_{X_{\mu}}(D_{\mu}))=0$ for general closed points $\mu \in \Spec A$, which implies that $H^i(X, \sO_X(D))=0$ by Lemma \ref{cohomology comparison}. 
\end{proof}

As a generalization of Corollary \ref{kodaira vanishing}, we expect that the following conjecture holds. 

\begin{conj}\label{vanishing conj}
Let $X$ be as in Corollary \ref{kodaira vanishing}. 
If $D$ is a nef $\Q$-Cartier Weil divisor on $X$, then $H^i(X, \sO_X(D))=0$ for all $i>0$. 
\end{conj}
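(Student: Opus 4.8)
My plan is to imitate the proof of Corollary~\ref{kodaira vanishing} and reduce modulo $p$. First I would fix a model $(X_A,D_A)$ of $(X,D)$ over a finitely generated $\Z$-subalgebra $A$ of $k$, with $X_A$ normal and $D_A$ a $\Q$-Cartier Weil divisor. By Lemma~\ref{cohomology comparison} together with generic freeness and semicontinuity, for a fixed $i>0$ one has $H^i(X,\sO_X(D))=0$ if and only if $H^i(X_\mu,\sO_{X_\mu}(D_\mu))=0$ for general closed points $\mu\in\Spec A$; after shrinking $\Spec A$ I may also assume that $X_\mu$ is globally $F$-regular. So it suffices to prove the vanishing on a general fibre $X_\mu$, and there I would invoke Proposition~\ref{thm:nef vanishing for GFR}: part $(2)$ gives the conclusion as soon as $D_\mu$ is nef, while part $(1)$ reduces the problem to producing an effective Weil divisor $B_\mu$ and an integer $n_0$ with $H^i(X_\mu,\sO_{X_\mu}(nD_\mu+B_\mu))=0$ for all $n\ge n_0$.

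Everything therefore comes down to transporting the nefness of $D$ to the fibres $X_\mu$. In two situations this is already available: if $D$ is semi-ample, then semi-ampleness descends and the statement is exactly Corollary~\ref{kodaira vanishing}; and if $\dim\mathrm{SB}(D)\le 1$, then Lemma~\ref{lem:nef reduction} guarantees that $D_\mu$ is nef for almost all $\mu$, so Proposition~\ref{thm:nef vanishing for GFR}$(2)$ closes the argument, which is the vanishing part of Proposition~\ref{Introprop}. For the general conjecture I would try to remove the hypothesis on $\mathrm{SB}(D)$, either by bounding, uniformly in $p$, the curves on $X_\mu$ along which $D_\mu$ can be negative, or by proving directly that a nef $\Q$-Cartier Weil divisor on a variety of globally $F$-regular type is semi-ample and then appealing to Corollary~\ref{kodaira vanishing}. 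The second route is natural, since an affirmative answer to Conjecture~\ref{Introconj} would make $X$ a Mori dream space, on which every nef $\Q$-Cartier divisor is semi-ample; one could hope to realise this consequence unconditionally by exploiting the structural properties of globally $F$-regular type --- rational connectedness, and klt singularities in the $\Q$-Gorenstein case --- together with an anti-canonical minimal model program.

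The step I expect to be the main obstacle is exactly this passage from nef to semi-ample, equivalently the failure of nefness to be preserved under reduction modulo $p$: a single curve $C\subset X_\mu$ with no counterpart in characteristic zero and with $D_\mu\cdot C<0$ simultaneously obstructs the nefness of $D_\mu$ and of every twist $nD_\mu+B_\mu$, so neither branch of Proposition~\ref{thm:nef vanishing for GFR} applies naively. Ruling out such curves, or building the auxiliary divisor $B_\mu$ with control uniform in $p$, seems to require genuinely new positivity input and is, in effect, comparable in difficulty to Conjecture~\ref{Introconj} itself.
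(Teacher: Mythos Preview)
The statement you are attempting is labelled \emph{Conjecture} in the paper, and the paper does not prove it. Immediately after stating it, the authors explain that it would follow from Conjecture~\ref{Introconj}, that the Cartier case was asserted in \cite{Smith00} without proof and remains open, and that a purported proof via \cite{Schoutens05} has a gap (with Example~\ref{counterexample} giving a counterexample to the relevant auxiliary statement). So there is no ``paper's own proof'' to compare against.

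Your write-up is not a proof either, and you say so yourself: the reduction-mod-$p$ scheme works exactly in the two cases the paper already handles (semi-ample, or $\dim\mathrm{SB}(D)\le 1$), and for the general statement you correctly isolate the obstruction, namely that nefness need not specialise to positive characteristic. Your diagnosis matches the paper's own discussion almost verbatim, including the observation that the missing step is essentially of the same order of difficulty as Conjecture~\ref{Introconj}. One small caution: your proposed fallback of proving directly that every nef $\Q$-Cartier Weil divisor on a variety of globally $F$-regular type is semi-ample is strictly stronger than the vanishing conjecture and would in particular require an affirmative answer to abundance-type questions well beyond what is known; the paper only establishes this in dimension $\le 3$ under the extra hypothesis $\dim\mathrm{SB}(D)\le 1$ (Corollary~\ref{cor:abundance}).
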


\begin{rem}
\begin{enumerate}
\item Conjecture \ref{vanishing conj} follows from Conjecture \ref{Introconj}. 

\item If $D$ is Cartier, then Conjecture \ref{vanishing conj} is presented as a proposition in \cite{Smith00} without proof, but to the best of the authors' knowledge, it remains open. 
Since nefness is not preserved under reduction modulo $p$ (see, for example, \cite[Section 8]{Langer15}, which gives an example of a nef Cartier divisor $D$ whose reduction modulo $p$ is not nef for almost all $p$), it does not follow from Proposition \ref{thm:nef vanishing for GFR}. 

\item  One might consider Conjecture \ref{vanishing conj} as a corollary of a result of Schoutens \cite[Corollary 6.6]{Schoutens05}, but there is a serious gap in the proof of \cite[Theorem 2.15]{Schoutens05}, which is crucial for \cite[Corollary 6.6]{Schoutens05}. 
In Example \ref{counterexample}, we will give a counterexample to \cite[Corollary 2.16]{Schoutens05}, which is an immediate corollary of \cite[Theorem 2.15]{Schoutens05}. 
\end{enumerate}
\end{rem}

If the following question has an affirmative answer, then Conjecture \ref{vanishing conj} would follow from Proposition \ref{thm:nef vanishing for GFR} (1).  
\begin{ques}\label{vanishing ques}
Let $X$ be a projective variety over a field $k$ of characteristic zero, and let $\mathcal{E}$ and $\mathcal{L}$ be invertible sheaves on $X$. 
We fix a positive integer $i$, and suppose that there exists an integer $n_0 \ge 1$ such that $H^i(X, \mathcal{E}\otimes \mathcal{L}^n)=0$ for all $n \ge n_0$. 
Then for a model $(X_A, \mathcal{E}_A, \mathcal{L}_A)$ of $(X, \mathcal{E}, \mathcal{L})$ over a finitely generated $\Z$-subalgebra $A$ over $k$, does there exist a dense subset $S \subseteq \Spec A$ of closed points such that 
$H^i(X_{\mu}, \mathcal{E}_{\mu} \otimes \mathcal{L}^n_{\mu})=0$ for all $n \ge n_0$ and for all $\mu \in S$? 
\end{ques}

However, the above question has a negative answer in general. 

\begin{eg}\label{counterexample}
Let $E$ be an elliptic curve over $\Q$, $\mathcal{E}=\sO_X$ and $\mathcal{L}$ be a numerically trivial non-torsion line bundle on $E$. Then 
\[
H^1(E, \mathcal{E}\otimes \mathcal{L}^n)=H^1(E, \mathcal{L}^n)=0    
\]
for all $n \ge 1$. 
On the other hand, since every numerically trivial line bundle on a projective scheme defined over a finite field is torsion, $\mathcal{L}_p$ is a torsion line bundle on $E_p$ for all but finitely many primes $p$ (here we consider a model of $E$ over $\Z[1/N]$ for some integer $N \ge 1$). 
If $n$ is a multiple of the order of $\mathcal{L}_p$, then 
\[H^1(E_p, \mathcal{E}_p\otimes \mathcal{L}_p^n)=H^1(E_p, \sO_{E_p}) \ne 0.\]
This gives a counterexample to Question \ref{vanishing ques} and \cite[Corollary 2.16]{Schoutens05}. 
\end{eg}

%%%%%%%%%%%%%%%%%%%%%%%%%%%%%%%%%%%%%%%%%%%%%%%%%%%%%%%%%%%%%%%%%%%%%%%%%%%%%%%%%%%%%%%%%%%%%%%%%%%%%%%%%%%%%%%%%%%

\section{On bigness under reduction modulo $p$}\label{big example}
In this section, we construct an example of a nef but not big Cartier divisor over a complex projective surface, whose reduction modulo $p$ is big but not nef for infinitely many primes $p$. 

Our example is a variation of \cite[Remark 3, pag. 3]{yang}. We follow a similar notation to that in \cite{DP} and \cite{AG}. Let $k=\mathbb C$, let $L\subset k$ be a real quadratic field, and let $R$ be the ring of integers of $L$. We denote by $\Delta$ the discriminant of $R$ over $\mathbb Z$. Fix a positive integer $N\ge 4$, let $\mu_N\subset k$ be the group generated by the $N$-th roots of  unity and let $\mathcal M$ be the moduli space of abelian surfaces with $\mu_N$-level structure. By \cite[Theorem 2.2]{DP},  $\mathcal M$ is smooth over $\Spec A$, where $A\coloneqq \mathbb Z[1/N\Delta]$. 
The Hodge bundle $\mathbb E$ on $\mathcal M$ is a locally free sheaf of rank $2$ over $\mathcal M$ \cite[\S 4.1]{AG} and there exist invertible sheaves $\mathbb L_1$ and $\mathbb L_2$ on $\mathcal M$ such that $\mathbb  E=\mathbb L_1\oplus \mathbb L_2$ and, for any $a,b\in \mathbb Z$, the sections of $\mathbb L_1^{\otimes a}\otimes \mathbb L_2^{\otimes b}$ are modular forms of weight $(a,b)$.

We denote by $V$ the Satake compactification of $\mathcal M$.
Then there exist $\ell_1$ and
$\ell_2$ Cartier divisors on $V$ such that $\sO_{\mathcal M} (\ell_i)\cong \mathbb L_i$ for $i=1,2$
\cite[Proposition 7.13]{LanSuh13}.
Let $X\coloneqq V\times_{\Spec A}\Spec k$ and let $\ell_{i,0}$ be the restriction of $\ell_i$ on $X$ for $i=1,2$.

We fix a rational prime number  $p$. Let $V_p$ be the fiber of $V\to \Spec A$ over $p$ and let $\ell_{i,p}$ be  the restriction of $\ell_i$ over $V_p$, for $i=1,2$. 
Then, as in \cite[\S 8]{AG} we have that 
$\ell_{1,p}^2=\ell_{2,p}^2=0$.

If $p$ is split in $L$ then \cite[Threom 8.2.4]{AG} implies that $a\ell_{1,p}+b\ell_{2,p}$ is ample if and only if $a,b>0$. In particular $\ell_{1,p}$ and $\ell_{2,p}$ are nef divisors. 
On the other hand, if $p$ is inert in $L$, then \cite[Threom 8.1.1]{AG} implies that $a\ell_{1,p}+b\ell_{2,p}$ is ample if and only if $a,b>0$ and $a/p <b <pa$. Moreover, as in the proof of \cite[Threom 8.1.1]{AG}, it follows that $p\ell_{1,p}-\ell_{2,p}$ and $p\ell_{2,p}-\ell_{1,p}$ are linearly equivalent to  effective divisors. Thus, 
$$\ell_{1,p} = \frac 1 {p+1} ( (\ell_{1,p}+\ell_{2,p})+ (p\ell_{1,p}-\ell_{2,p}))$$ 
is a big divisor, as it is linearly equivalent to a sum of an ample divisor and an effective divisor. On the other hand, $\ell_{1,p}$ is not nef, since $\ell_{1,p}+t\ell_{2,p}$ is not ample for any sufficiently small number $t>0$.

In conclusion, we have that $\ell_{1,0}$ and $\ell_{2,0}$ are nef but not big divisors on $X$, whose restrictions modulo $p$ is big but not nef for infinitely many $p$. 

%%%%%%%%%%%%%%%%%%%%%%%%%%%%%%%%%%%%%%%%%%%%%%%%%%%%%%%%%%%%%%%%%%%%%%%%%%%%%%%%%%%%%%%%%%%%%%%%%%%%%%%%%%%%%%%%%%%

\section{The case of small stable base loci}

In this section, we prove Proposition \ref{Introprop} and, in particular, establish Conjecture \ref{Introconj} when $-K_X$ is nef and $\dim \mathrm{SB}(-K_X)\leq 1$.

First, we recall the definition of the stable base locus and the numerical dimension. 
\begin{defn}
Let $X$ be a $d$-dimensional normal projective variety over a field and $D$ be a $\Q$-Cartier $\Q$-Weil divisor on $X$.
\begin{enumerate}
\item 
We fix a positive integer $r>0$ such that $rD$ is Cartier.
The \textit{stable base locus} $\mathrm{SB}(D)$ of $D$ is defined as
\[
\mathrm{SB}(D)= \bigcap_{m\geq 1}\mathrm{Bs}(mrD)_{\mathrm{red}},
\]
where $\mathrm{Bs}(D')_{\mathrm{red}}$ denotes the base locus of a Cartier divisor $D'$ with reduced structure.
Note that the definition of $\mathrm{SB}(D)$ does not depend on the choice of $r$.
\item 
Suppose that $D$ is nef. Then the \textit{numerical dimension} $\nu(D)$ of $D$ is defined as 
\[
\nu(D)=\max\{i \in \Z_{\ge 0} \; | \; D^{i} \cdot H^{d-i} \ne 0\},
\]
where $H$ is an ample Cartier divisor on $X$. 
\end{enumerate}
\end{defn}

The following lemma is a key to this section. 

\begin{lem}\label{lem:nef reduction}
Let $X$ be a projective variety over a field $k$ of characteristic zero and $D$ be a nef $\Q$-Cartier $\Q$-Weil divisor on $X$.
Suppose that the stable base locus $\mathrm{SB}(D)$ of $D$ has dimension at most one. 
Given a model $(X_A, D_A)$ of $(X, D)$ over a finitely generated $\Z$-subalgebra $A$ of $k$, 
then $D_{\mu}$ is nef and $\nu(D_{\mu})=\nu(D)$ for a general closed point $\mu \in \Spec A$. 
\end{lem}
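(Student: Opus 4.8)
The plan is to compare intersection numbers on the generic fiber and on general closed fibers, and to exploit the hypothesis $\dim \mathrm{SB}(D)\le 1$ to control where positivity of $D_\mu$ could fail. First I would observe that by Lemma \ref{intersection}, for every $i$ the self-intersection data is preserved: $D^i\cdot H^{d-i}=D_\mu^i\cdot H_\mu^{d-i}$ for general $\mu$, where $H$ is a fixed ample Cartier divisor with model $H_A$. In particular $D_\mu$ is pseudo-effective (even has nonzero top-degree numerical behaviour in degree $\nu(D)$), and once we know $D_\mu$ is nef, the equality $\nu(D_\mu)=\nu(D)$ is immediate from these same identities and the definition of $\nu$. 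So the entire content is the nefness of $D_\mu$.

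To prove nefness of $D_\mu$, I would argue by contradiction along the following lines. Write $N=\mathrm{SB}(D)$; after replacing $D$ by a multiple we may assume $D$ is Cartier. Decompose $D\sim M+F$ where $M$ is the movable part of a large multiple of $D$ (so that $\mathrm{Bs}(M)$ has no divisorial components, and $M$ is, say, basepoint free away from $N$) and $F\ge 0$ is supported on (a finite set of) divisors; more precisely, choose $m\gg 0$ so that the base locus of $mD$ has its divisorial part stabilised and set $mD=M'+F'$ with $F'\ge0$ the fixed divisorial part and $\mathrm{Bs}(M')\subseteq N$. Passing to a model over a suitably enlarged $A$, these linear equivalences and the inclusion $\mathrm{Bs}(M'_A)\subseteq N_A$ are preserved on general fibers, so that $\mathrm{Bs}(M'_\mu)\subseteq N_\mu$, which has dimension $\le 1$. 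Now suppose $D_\mu$ is not nef: then there is an irreducible curve $C\subset X_\mu$ with $D_\mu\cdot C<0$. Since $D_\mu\cdot C<0$ forces $C\subseteq \mathrm{SB}(D_\mu)\subseteq \mathrm{Bs}(M'_\mu)\cup\mathrm{Supp}(F'_\mu)$, the curve $C$ lies in a controlled locus: either in the one-dimensional set $N_\mu$, or inside one of the finitely many divisorial components of $F'_\mu$. In either case I want to bound the negativity of $D_\mu\cdot C$ uniformly in $\mu$ and derive a contradiction with $D\cdot(\text{lift of }C)\ge 0$.

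The main obstacle — and the step that needs the hypothesis genuinely — is handling the case where $C$ lies on a divisorial component $S$ of the fixed part $F'_\mu$. Here the idea is to restrict to $S$: on the characteristic-zero side, $D|_S$ is nef on the surface (or lower-dimensional variety) $S_0$, and its stable base locus is contained in $\mathrm{SB}(D)\cap S_0$, which is at most a curve; so by induction on dimension (the base case $\dim X=1$, or $\dim S\le 1$, being trivial since nefness on a curve is just non-negativity of a degree, visibly preserved) one reduces to knowing the result for $S$. For the surface case one can argue directly: a nef divisor on a surface whose stable base locus is at most a curve has, by the Zariski decomposition $D|_S=P+E$, negative part $E$ contracted to points by the semiample $P$; then $D|_S\cdot C\ge 0$ for every curve $C$ because the only curves with negative intersection would be components of $E$, and these are accounted for, and nefness of the reduction follows since the combinatorial data of the Zariski decomposition and the intersection matrix of $E$ are preserved under reduction modulo $p$ for general $\mu$. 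The curves $C\subseteq N_\mu$ not contained in any such $S$ are then dealt with by a Noetherian/constructibility argument: there are only finitely many families of such curves and their $D_A$-degrees on the generic fiber are $\ge 0$, hence $D_\mu\cdot C\ge 0$ for general $\mu$. Assembling these cases contradicts the existence of $C$ with $D_\mu\cdot C<0$, completing the proof.
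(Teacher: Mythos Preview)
Your approach is on the right track but significantly overcomplicated, because you have missed the main consequence of the hypothesis $\dim \mathrm{SB}(D)\le 1$: for $m$ sufficiently divisible, $\mathrm{Bs}(mD)$ already has dimension at most one, so there is \emph{no} divisorial fixed part at all. In your notation, $F'=0$. The entire machinery you build for the case $C\subset S$ with $S$ a divisorial component of $F'_\mu$---induction on dimension, Zariski decomposition on surfaces, and so on---is therefore vacuous. (Incidentally, the inclusion $\mathrm{SB}(D|_S)\subset \mathrm{SB}(D)\cap S$ you invoke there is not valid in general, since sections on $S$ need not lift to $X$; but this is moot as the case does not arise.)

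Once you drop $F'$, the argument collapses to the paper's short proof. By Lemma~\ref{cohomology comparison} one has $\mathrm{Bs}(mD_\mu)=\mathrm{Bs}(mD)_\mu$ for general $\mu$, and the one-dimensional part of this locus is the union of the finitely many curves $C_{1,\mu},\dots,C_{r,\mu}$ obtained by reducing the curve components $C_1,\dots,C_r$ of $\mathrm{Bs}(mD)$. Any curve $F^\mu\subset X_\mu$ with $D_\mu\cdot F^\mu<0$ must lie in $\mathrm{Bs}(mD_\mu)$, hence equal some $C_{i,\mu}$; then $D_\mu\cdot C_{i,\mu}=D\cdot C_i\ge 0$ by Lemma~\ref{intersection} and the nefness of $D$. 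Your handling of the case $C\subset N_\mu$ via ``finitely many families'' and a Noetherian argument is an unnecessarily vague version of exactly this: there are no families of curves in the at-most-one-dimensional set $N_\mu$, just finitely many irreducible curves, each the reduction of a specific $C_i$. The equality $\nu(D_\mu)=\nu(D)$ then follows, as you correctly say, immediately from Lemma~\ref{intersection}.
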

\begin{proof}
Take a sufficiently divisible $m$ such that $\mathrm{SB}(D)=\mathrm{Bs}(mD)_{\mathrm{red}}$. 
Let $\sum_{i}a_iC_i$ denote the dimension one part of the base locus $\Bs(mD)$.
By Lemma \ref{cohomology comparison}, 
\[H^0(X, \sO_X(mD))_{\mu} = H^0(X_{\mu}, \sO_{X_{\mu}}(mD_{\mu})),\]
so that $\Bs(mD_{\mu})= \Bs(mD)_{\mu}$ for a general closed point $\mu \in \Spec A$. 

First, we  show that $D_{\mu}$ is nef, that is, $D_{\mu}\cdot F^{\mu}\geq 0$ for every curve $F^{\mu}$ on $X_{\mu}$. 
We may assume that $F^{\mu}$ is contained in $\Bs(mD_{\mu})$, and thus $F^{\mu}=C_{i,\mu}$ for some $i$.
Then, by Lemma \ref{intersection} we have 
\[
D_{\mu}\cdot F^{\mu}=D_{\mu}\cdot C_{i,\mu}= D\cdot C_{i}\geq 0.
\] 

Next, we  show that $\nu(D)=\nu(D_{\mu})$ for a general closed point $\mu \in \Spec A$. 
For an ample Cartier divisor $H$ on $X$, we have  that $H_{\mu}$ is an ample Cartier divisor on $X_{\mu}$, and by Lemma \ref{intersection} we have 
\[
D^{j}\cdot H^{\dim X-j}=D_{\mu}^{j}\cdot H_{\mu}^{\dim X-j} 
\] 
for a general closed point $\mu \in \Spec A$, which implies the assertion. 
\end{proof}

\begin{cor}\label{cor:big}
Let $X$ be a projective variety over a field $k$ of characteristic zero such that $-K_X$ is nef $\Q$-Cartier and $\dim \mathrm{SB}(-K_X)\leq 1$. 
If $X$ is of globally $F$-regular type, 
then $-K_X$ is big. In particular, Conjecture \ref{Introconj} holds in this case.
\end{cor}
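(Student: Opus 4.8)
The plan is to combine Lemma \ref{lem:nef reduction} with the positive-characteristic input from Proposition \ref{rem:GFR}. Since $-K_X$ is nef $\Q$-Cartier and $\dim \mathrm{SB}(-K_X)\le 1$, we may fix a model $(X_A,(-K_X)_A)$ over a finitely generated $\Z$-subalgebra $A$ of $k$; note that for general closed points $\mu\in\Spec A$ the divisor $(-K_X)_\mu$ is $\Q$-linearly equivalent to $-K_{X_\mu}$, because canonical divisors are compatible with reduction modulo $p$. Enlarging $A$, we may also assume that $X_\mu$ is globally $F$-regular for all closed points $\mu$ in some dense open subset of $\Spec A$, using that $X$ is of globally $F$-regular type.

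Next I would apply Lemma \ref{lem:nef reduction} to $D=-K_X$: for a general closed point $\mu$, the divisor $-K_{X_\mu}$ is nef and $\nu(-K_{X_\mu})=\nu(-K_X)$. On the other hand, since $X_\mu$ is globally $F$-regular, Proposition \ref{rem:GFR} gives that $-K_{X_\mu}$ is big, hence $\nu(-K_{X_\mu})=\dim X_\mu=\dim X$. Therefore $\nu(-K_X)=\dim X$ as well. For a nef $\Q$-Cartier divisor on a projective variety, having numerical dimension equal to the dimension of the variety is equivalent to bigness (this is the standard characterization: a nef divisor $D$ on a $d$-dimensional projective variety is big if and only if $D^d>0$, equivalently $\nu(D)=d$). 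Hence $-K_X$ is big.

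Finally, $X$ is $\Q$-Gorenstein of globally $F$-regular type, so by Proposition \ref{rem:GFRtype}~(2) it has only klt singularities; since $-K_X$ is nef and big, $X$ is weak Fano and in particular of Fano type (take $\Delta=0$ if $-K_X$ is already ample, or perturb using a general member of a multiple of $-K_X$ in the usual way, or simply invoke that klt weak Fano varieties are of Fano type). This establishes Conjecture \ref{Introconj} in this case.

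I do not expect any serious obstacle here: the content has been isolated into Lemma \ref{lem:nef reduction}, and the remaining ingredients — bigness of $-K_{X_\mu}$ for globally $F$-regular $X_\mu$, the numerical criterion for bigness of nef divisors, compatibility of $K_X$ with reduction modulo $p$, and the klt property — are all standard or already recorded in the excerpt. The only point requiring a little care is checking that one may arrange a single dense open (equivalently, infinitely many, or even just one) closed point $\mu$ that is simultaneously ``general'' in the sense of Lemma \ref{lem:nef reduction}, Lemma \ref{intersection}, and globally $F$-regular; but this is immediate since each condition holds on a dense open subset (or for general closed points) of $\Spec A$ and such subsets intersect.
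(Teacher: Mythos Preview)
Your proof is correct and follows essentially the same route as the paper's: reduce modulo $p$, use Lemma~\ref{lem:nef reduction} to get nefness of $-K_{X_\mu}$, use Proposition~\ref{rem:GFR} to get bigness of $-K_{X_\mu}$, and transfer bigness back to $-K_X$. The only cosmetic difference is that you phrase the transfer via the equality $\nu(-K_{X_\mu})=\nu(-K_X)$ from Lemma~\ref{lem:nef reduction}, whereas the paper invokes Lemma~\ref{intersection} directly to get $(-K_X)^{\dim X}=(-K_{X_\mu})^{\dim X}>0$; these are the same computation.
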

\begin{proof}
Given a model of $X$ over a finitely generated $\Z$-subalgebra $A$ of $k$, by Lemma \ref{lem:nef reduction}, $-K_{X_{\mu}}$ is nef for a general closed point $\mu \in \Spec A$. 
On the other hand, since $X_{\mu}$ is globally $F$-regular, it follows from Proposition \ref{rem:GFR} that $-K_{X_{\mu}}$ is big for a general closed point $\mu \in \Spec A$. 
Therefore, by Lemma \ref{intersection} we have 
\[(-K_{X})^3=(-K_{X_{\mu}})^3>0,\] 
which implies that $-K_X$ is big.
\end{proof}

\begin{cor}\label{cor:nef vanishing}
Let $X$ be a projective variety over a field $k$ of characteristic zero, and suppose that $X$ is of globally $F$-regular type.
If $D$ is a nef $\Q$-Cartier Weil divisor on $X$ such that $\dim \mathrm{SB}(D)\leq 1$, then Conjecture \ref{vanishing conj} holds for $D$, that is, 
\[
H^i(X,\sO_X(D))=0
\]
for all $i>0$. 
\end{cor}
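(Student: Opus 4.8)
The statement to prove is Corollary \ref{cor:nef vanishing}: if $X$ is of globally $F$-regular type and $D$ is a nef $\Q$-Cartier Weil divisor with $\dim\mathrm{SB}(D)\le 1$, then $H^i(X,\sO_X(D))=0$ for all $i>0$. The plan is to combine Lemma \ref{lem:nef reduction} with Proposition \ref{thm:nef vanishing for GFR}(2) via reduction modulo $p$. First I would fix a model $(X_A,D_A)$ of $(X,D)$ over a finitely generated $\Z$-subalgebra $A$ of $k$, enlarging $A$ as needed so that $X_A\to\Spec A$ is flat, $X_A$ is normal and integral, $D_A$ is a $\Q$-Cartier Weil divisor on $X_A$, and the conclusions of Lemmas \ref{cohomology comparison} and \ref{lem:nef reduction} hold for general closed points. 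Also enlarge $A$ so that $X_\mu$ is globally $F$-regular for all closed points $\mu$ in some dense open subset $U\subseteq\Spec A$ (possible by definition of globally $F$-regular type).

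Next, for a general closed point $\mu\in U$, Lemma \ref{lem:nef reduction} gives that $D_\mu$ is nef on $X_\mu$. Since $X_\mu$ is globally $F$-regular over the finite (hence $F$-finite) field $\kappa(\mu)$, Proposition \ref{thm:nef vanishing for GFR}(2) applied to the nef $\Q$-Cartier Weil divisor $D_\mu$ yields $H^i(X_\mu,\sO_{X_\mu}(D_\mu))=0$ for all $i>0$. Finally, by Lemma \ref{cohomology comparison} we have $H^i(X_A,\sO_{X_A}(D_A))\otimes_A\kappa(\mu)\cong H^i(X_\mu,\sO_{X_\mu}(D_\mu))=0$ for general closed points $\mu$, and $H^i(X_A,\sO_{X_A}(D_A))\otimes_A k\cong H^i(X,\sO_X(D))$. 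Since $H^i(X_A,\sO_{X_A}(D_A))$ is a finitely generated $A$-module that vanishes after tensoring with $\kappa(\mu)$ for a dense set of closed points $\mu$, generic freeness (shrinking $\Spec A$) forces $H^i(X_A,\sO_{X_A}(D_A))=0$ after a further localization of $A$, hence $H^i(X,\sO_X(D))=0$.

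There is essentially no serious obstacle here: the real content has already been isolated into Lemma \ref{lem:nef reduction} (nefness of $D$ with small stable base locus descends to the reductions $D_\mu$) and Proposition \ref{thm:nef vanishing for GFR}(2) (vanishing for nef divisors on globally $F$-regular varieties). The only point requiring minor care is the bookkeeping of how many times one enlarges $A$: each of the finitely many $i$ with $0<i\le\dim X$ imposes a condition from Lemma \ref{cohomology comparison}, and the open sets from "general closed point" and from global $F$-regularity must be intersected; since all of these are dense open (or cofinite among closed points), their common refinement is still dense, and the argument goes through. The structure mirrors the proof of Corollary \ref{kodaira vanishing} almost verbatim, with Lemma \ref{lem:nef reduction} playing the role that semi-ampleness played there.
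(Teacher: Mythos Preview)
Your proposal is correct and follows essentially the same approach as the paper: reduce modulo $p$, use Lemma \ref{lem:nef reduction} to ensure $D_\mu$ is nef, apply Proposition \ref{thm:nef vanishing for GFR}(2) on the globally $F$-regular fiber $X_\mu$, and pull the vanishing back via Lemma \ref{cohomology comparison}. The extra bookkeeping you add about generic freeness and intersecting dense opens is fine but not strictly needed, since Lemma \ref{cohomology comparison} already packages the descent of vanishing.
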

\begin{proof}
Given a model of $(X, D)$ over a finitely generated $\Z$-subalgebra $A$ of $k$, by Lemma \ref{lem:nef reduction}, $D_{\mu}$ is nef for a general closed point $\mu \in \Spec A$. 
It then follows from Proposition \ref{thm:nef vanishing for GFR} (2) that $H^i(X_{\mu},\sO_{X_{\mu}}(D_{\mu}))=0$ for all $i>0$ and for a general closed point $\mu \in \Spec A$. 
Finally, Lemma \ref{cohomology comparison} establishes the assertion. 
\end{proof}

As another consequence of Lemma \ref{lem:nef reduction}, we prove a special case of the base point free theorem for threefolds of globally $F$-regular type. 

\begin{cor}\label{cor:abundance}
    Let $X$ be a projective threefold over a field $k$ of characteristic zero, and suppose that $X$ is of globally $F$-regular type.
    If $D$ is a nef $\Q$-Cartier $\Q$-Weil divisor such that $\dim\,\mathrm{SB}(D)\leq 1$, then $D$ is semi-ample.
\end{cor}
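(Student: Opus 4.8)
\emph{Strategy.} The plan is to reduce modulo $p$, use that the reductions $X_\mu$ are Fano type threefolds — so that $D_\mu$ becomes semi-ample by the three-dimensional minimal model program in positive characteristic — then upgrade this to a single uniform multiple by a quasi-compactness argument over $\Spec A$, and finally descend base-point-freeness back to characteristic zero. To set up, fix an integer $r>0$ with $rD$ Cartier and a model $(X_A,D_A)$ of $(X,D)$ over a finitely generated $\Z$-subalgebra $A$ of $k$. Shrinking $\Spec A$, we may assume that $X_A\to\Spec A$ is flat and projective, that $rD_A$ is Cartier, that $X_\mu$ is a globally $F$-regular variety for every closed point $\mu\in\Spec A$, and — this is the only place the hypothesis $\dim\mathrm{SB}(D)\le 1$ is used, via Lemma \ref{lem:nef reduction} — that $D_\mu$ is nef for every such $\mu$. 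By Proposition \ref{rem:GFR}, each $X_\mu$ is then a Fano type threefold.

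For each closed $\mu$ I would next choose $\Delta_\mu\ge 0$ with $(X_\mu,\Delta_\mu)$ klt and $-(K_{X_\mu}+\Delta_\mu)$ ample $\Q$-Cartier; then $rD_\mu-(K_{X_\mu}+\Delta_\mu)$ is ample, so by the base point free theorem for klt threefolds in positive characteristic — available once $\mathrm{char}\,\kappa(\mu)$ is large enough, after one further shrinking of $\Spec A$ — the nef Cartier divisor $rD_\mu$ is semi-ample. (Equivalently: $X_\mu$ is a Mori dream space and $D_\mu$ is a nef $\Q$-Cartier divisor on it.) Thus for every closed $\mu$ there is an integer $N(\mu)\ge 1$ with $N(\mu)\,rD_\mu$ base point free.

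To make $N(\mu)$ uniform, fix $N\ge 1$ and consider the cokernel of the evaluation map $H^0(X_A,\sO_{X_A}(NrD_A))\otimes_A\sO_{X_A}\to\sO_{X_A}(NrD_A)$: it is coherent, its support is closed in $X_A$, and its image in $\Spec A$ is closed since $X_A$ is proper over $\Spec A$. After the usual shrinking this support meets a general fibre $X_\mu$ exactly in $\Bs(NrD_\mu)$, so $S_N:=\{\mu\ \text{closed}:NrD_\mu\ \text{is base point free}\}$ is the set of closed points of an open subset of $\Spec A$. Since each closed point lies in some $S_{N(\mu)}$ and $\Spec A$ is Jacobson and quasi-compact, finitely many of the $S_N$, say $S_{N_1},\dots,S_{N_t}$, already cover a dense open $U\subseteq\Spec A$; putting $N=\mathrm{lcm}(N_1,\dots,N_t)$ and using that base-point-freeness passes to multiples, $NrD_\mu$ is base point free for all closed $\mu\in U$. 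Finally $\Bs(NrD_A)\subseteq X_A$ is proper over $\Spec A$ with image avoiding $U$, hence is a proper closed subset of $\Spec A$ and in particular misses the generic fibre; since $H^0(X,\sO_X(NrD))$ is the base change of $H^0(X_A,\sO_{X_A}(NrD_A))$ by Lemma \ref{cohomology comparison}, the base locus of $NrD$ on $X$ is the base change of $\Bs(NrD_A)$ from the generic fibre, which is empty. Hence $NrD$ is base point free and $D$ is semi-ample.

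\emph{Main obstacle.} The crucial step is the second one, producing semi-ampleness of $D_\mu$ in characteristic $p$: this is precisely where $\dim X=3$ is needed, since it rests on the minimal model program for threefolds in positive characteristic — equivalently, on Fano type threefolds being Mori dream spaces in large characteristic — which has no known analogue in higher dimension, whereas every other step of the argument is dimension-free. A secondary, routine point is the cohomology-and-base-change bookkeeping needed to realise the sets $S_N$ as open subsets over a common dense open of $\Spec A$, so that the quasi-compactness argument applies; this is handled by generic flatness.
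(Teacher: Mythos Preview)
Your overall strategy is sound, but there is a genuine gap in the uniformity step. When you write ``after the usual shrinking this support meets a general fibre $X_\mu$ exactly in $\Bs(NrD_\mu)$,'' the shrinking needed to identify $\Bs(NrD_A)_\mu$ with $\Bs(NrD_\mu)$ is $N$-dependent: generic flatness only gives, for each $N$ separately, a dense open $U_N\subseteq\Spec A$ on which $H^0$ commutes with base change. There is no reason a given closed point $\mu$ should lie in $U_{N(\mu)}$, so the sets $S_N$ are only known to be open inside $N$-dependent opens, and the quasi-compactness argument over a ``common dense open'' does not go through. Your claim that this is ``handled by generic flatness'' is exactly where the argument breaks.

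The missing ingredient is Proposition~\ref{thm:nef vanishing for GFR}~(2): since $D_\mu$ is nef and $X_\mu$ is globally $F$-regular, one has $H^i(X_\mu,\sO_{X_\mu}(NrD_\mu))=0$ for every $i>0$ and every $N\ge 1$. By cohomology and base change this forces $H^0(X_A,\sO_{X_A}(NrD_A))\otimes_A\kappa(\mu)\to H^0(X_\mu,\sO_{X_\mu}(NrD_\mu))$ to be surjective \emph{at that specific} $\mu$, with no shrinking at all; hence $S_N=T_N$ is genuinely open and your covering argument works. But once you have this vanishing, the quasi-compactness detour is unnecessary: for a single closed point $\mu$ and $N=N(\mu)$, the fibre of $\Bs(NrD_A)$ over $\mu$ is empty, so its closed image in $\Spec A$ is proper and misses the generic point, and you are done. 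This is precisely the paper's argument: it fixes one $\mu$, invokes the same vanishing, and uses Grauert's theorem over the completion $\widehat{A_\mu}$---an equivalent packaging of the same base-change statement---to conclude that $mD$ is free.
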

\begin{proof}
Suppose that $(X_A, D_A)$ is a model of $(X, D)$ over a finitely generated $\Z$-subalgebra $A$ of $k$. 
We fix a general closed point $\mu \in \Spec A$ such that $k(\mu)$ is of characteristic greater than five.
Let ($X_{\overline{\mu}}, D_{\overline{\mu}})$ denote the base change of $(X_{\mu}, D_{\mu})$ to an algebraic closure $\overline{\kappa(\mu)}$ of $\kappa(\mu)$. 
Note that by Lemma \ref{lem:nef reduction}, $D_{\mu}$, and therefore $D_{\overline{\mu}}$, is nef. 
Since $X_{\overline{\mu}}$ is globally $F$-regular by \cite[Proposition 2.6]{GLP$^+$15} and, in particular, of Fano type by Proposition \ref{rem:GFR}, 
it follows from the base point free theorem \cite[Theorem 1.2]{Birker-Waldron} that  $D_{\overline{\mu}}$, and therefore $D_{\mu}$, is semi-ample. 
Take a positive integer $m \ge 1$ such that $mD_{\mu}$ is a free Cartier divisor.
Then $H^i(X_{\mu},\sO_{X_{\mu}}(mD_{\mu}))=0$ for all $i>0$ by Proposition \ref{thm:nef vanishing for GFR} (2).

Let $R\coloneqq \widehat{A_{\mu}}$ denote the $\mu$-adic completion of the localization of $A$ at $\mu$. 
By taking the base change by $\Spec\,R\to \Spec\,A$, the following hold:
\begin{enumerate}
\item $X_{\mu}$ and $\sO_{X_{\mu}}(mD_{\mu})$ have liftings $X_R$ and $\sO_{X_R}(mD_R)$ over $R$; 
\item the base change of the generic fiber of $X_R\to \Spec R$ to $k$ is isomorphic to $X$; and  
\item $\sO_{X_R}(mD_R)\otimes_R k \cong \sO_X(mD)$.
\end{enumerate}
By the upper semicontinuity of the dimension of the cohomology group of fibers \cite[Chapter III, Theorem 12.8]{Har}, we obtain $H^i(X_R, \sO_{X_R}(mD_R))=0$ for all $i>0$.
Then Grauert's theorem implies that the natural map
\[
    H^0(X_R, \sO_{X_R}(mD_R))\otimes_{R} k(\mu)\to H^0(X_{\mu}, \sO_{X_{\mu}}(mD_{\mu}))
\]
is an isomorphism.

Now we  show that $mD$ is free.  
Since $\Bs(mD_R)$ is closed in $X_R$ and $X_R$ is projective over $R$, the subscheme $\Bs(mD_R)$ would dominate $\Spec\,R$ if it were not empty. 
However, by the above isomorphism, together with the fact that $mD_{\mu}$ is free, the base locus $\Bs(mD_R)$ must be empty. 
Then, by flat base change, 
\[H^0(X_R, \sO_{X_R}(mD_R))\otimes_{R} k \cong H^0(X,\sO_X(mD)),\]
leading to the conclusion that $mD$ is free.
\end{proof}

\section{Proof of the Main Theorem}
Throughout this section, let $k$ denotes an algebraically closed field of characteristic zero. 
We  prove the main theorem of this paper: 

\begin{thm}\label{mainthm}
Let $X$ be a smooth projective threefold over $k$ with $-K_X$ nef. 
If $X$ is of globally $F$-regular type, then $-K_X$ is big. 
In particular, Conjecture \ref{Introconj} holds in this case.
\end{thm}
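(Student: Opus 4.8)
The plan is to reduce the bigness of $-K_X$ to the base-point-free/vanishing machinery of Section 4 (Proposition \ref{Introprop}), applied not to $-K_X$ directly — whose stable base locus could be two-dimensional — but to carefully chosen auxiliary divisors. Since $X$ is smooth of globally $F$-regular type, it is rationally connected (Proposition \ref{thm:GFRtype to RC}(3)), so $H^i(X,\sO_X)=0$ for $i>0$ and the Riemann--Roch formula \eqref{RR for -K} gives $\chi(X,\sO_X(-K_X))=\tfrac12(-K_X)^3+3$. Thus it suffices to show $(-K_X)^3>0$, equivalently $\nu(-K_X)=3$. I would argue by contradiction, assuming $\nu(-K_X)\le 2$, and split into cases according to $\nu:=\nu(-K_X)\in\{0,1,2\}$.

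First I would dispose of the easy cases. If $\nu(-K_X)=0$ then $-K_X\equiv 0$, so $X$ is a (terminal, hence smooth) minimal model with numerically trivial canonical class; but a rationally connected variety cannot have $K_X\equiv 0$ (e.g.\ by nonvanishing of $H^0(X,\Omega^n_X)$-type obstructions, or since $\kappa(X)=-\infty$ forces $-K_X$ effective and nonzero), contradiction. The genuinely hard cases are $\nu=1$ and $\nu=2$. Here the strategy indicated in the introduction is to run an MMP or use the structure of nef non-big anti-canonical threefolds: since $-K_X$ is nef, by the base-point-free theorem (using terminality/klt) $-K_X$ is semiample away from the issue that we don't a priori know $\dim\mathrm{SB}(-K_X)\le 1$ — but in fact on a smooth rationally connected threefold one can often reduce to that situation. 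The key move I expect: find a movable (or nef effective) divisor $D$ on $X$, or a nef effective divisor on a surface $S\subset X$ in $|-K_X|$ or in some extremal contraction, whose stable base locus has dimension $\le 1$, apply Corollary \ref{cor:abundance} to get semiampleness, then use Corollary \ref{cor:nef vanishing} and Riemann--Roch on the resulting fibration to derive a numerical contradiction with $(-K_X)^3=0$.

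More concretely, in the case $\nu(-K_X)=2$ I would expect $-K_X$ to be semiample (this is where Corollary \ref{cor:abundance} enters, after checking $\dim\mathrm{SB}(-K_X)\le 1$, which on a threefold with $\nu=2$ should follow since the non-semiample locus has to be small), inducing a contraction $f:X\to S$ onto a surface with $-K_X\sim_\Q f^*A$ for $A$ ample on $S$; then general fibers are conics and one analyzes $S$, which is of globally $F$-regular type by Lemma \ref{lem:crepant and GFR}, to force $-K_X$ big, a contradiction. In the case $\nu(-K_X)=1$, $-K_X$ semiample would give a fibration onto a curve with fibers del Pezzo or K3/abelian/etc.; rational connectedness rules out K3/abelian fibers, and a del Pezzo fibration over $\PP^1$ forces $-K_X$ big. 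The main obstacle, as the authors themselves flag, is that nefness is not preserved under reduction mod $p$, so one cannot simply reduce and quote positive-characteristic results for $-K_X$; the whole point is to replace $-K_X$ by divisors $D$ with $\dim\mathrm{SB}(D)\le1$ so that Lemma \ref{lem:nef reduction} applies and nefness \emph{is} preserved. Thus the hard part is the birational-geometric case analysis that produces, in each case $\nu\in\{1,2\}$, either an explicit fibration structure or an auxiliary small-stable-base-locus divisor to which Proposition \ref{Introprop} can be applied; the reduction-mod-$p$ input is then essentially a black box via Corollaries \ref{cor:big}, \ref{cor:nef vanishing}, \ref{cor:abundance}.
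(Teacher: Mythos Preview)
Your overall plan --- reduce to $\nu(-K_X)=3$ by eliminating $\nu\in\{0,1,2\}$ --- matches the paper exactly, and your $\nu=0$ argument is fine. But your sketches for $\nu=2$ and $\nu=1$ both hinge on an unjustified step: you assume (or hope) that $-K_X$ itself is semiample, via ``$\dim\mathrm{SB}(-K_X)\le1$ should follow since the non-semiample locus has to be small.'' There is no such principle, and this is exactly the obstacle the paper is built around. Also, your endgame for $\nu=2$ (``general fibers are conics \dots\ force $-K_X$ big'') does not give a contradiction: if $-K_X$ were pulled back from a surface then $(-K_X)^3=0$ is perfectly consistent with $\nu=2$.

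For $\nu=2$ the paper never shows $\mathrm{SB}(-K_X)$ is small. Instead, Riemann--Roch (with an $H^2$-vanishing) gives $h^0(-K_X)\ge 3$; one takes the movable part $M$ of $|-K_X|$, so that $\dim\mathrm{SB}(M)\le1$ holds \emph{automatically}, runs a sequence of flops (Lemma \ref{lem:flip}, preserving globally $F$-regular type by Lemma \ref{lem:small and GFR}) to make $M$ nef, and only then applies Corollary \ref{cor:abundance} to $M$. The induced map $X\to Z$ has $\dim Z\in\{1,2\}$, and one restricts $-K_X$ to a general fiber and applies Lemma \ref{lem:nef reduction} to that restriction to contradict bigness of $-K_{X_\mu}$ (or the fiber genus). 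For $\nu=1$ the paper likewise does not start from semiampleness of $-K_X$: it takes a $K_X$-negative extremal contraction $f\colon X\to Y$ and splits by $\dim Y$. When $\dim Y\le2$ one restricts $-K_X$ to a suitable surface $T\subset X$ and combines Riemann--Roch on $T$ with Lemma \ref{lem:nef reduction} for $-K_X|_T$. When $\dim Y=3$ (this is the only place smoothness is used) one shows $-K_Y$ is nef and big, deduces $h^0(-K_X)\ge2$, and \emph{then} gets $\kappa(-K_X)=\nu(-K_X)=1$, hence semiampleness via Kawamata's theorem, and finishes with Corollary \ref{cor:big}. So in both hard cases, the auxiliary small-stable-base-locus divisor is produced by nontrivial birational work (flops, extremal contractions), not by a direct bound on $\mathrm{SB}(-K_X)$.
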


The proof is divided into three cases based on the numerical dimension of $X$. 

\subsection{The case $\nu(-K_X)=0$}

\begin{prop}\label{prop:nu=0}
    Let $X$ be a normal projective variety over $k$ of globally $F$-regular type with $-K_X$ nef.
    Then $\nu(-K_X)\neq 0$.
\end{prop}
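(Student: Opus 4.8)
The plan is to argue by contradiction, assuming $\nu(-K_X)=0$. Since $-K_X$ is nef, the equality $(-K_X)\cdot H^{\dim X-1}=0$ (for $H$ ample) forced by $\nu(-K_X)=0$ makes $-K_X$ numerically trivial, $-K_X\equiv 0$: a nef divisor $D$ with $D\cdot H^{\dim X-1}=0$ is numerically trivial, as one sees by restricting to a general complete-intersection surface and applying the Hodge index theorem. Since $-K_X$ is $\Q$-Cartier, $X$ is $\Q$-Gorenstein, so Proposition \ref{rem:GFRtype}\,(2) shows $X$ is klt and Proposition \ref{rem:GFRtype}\,(3) shows $X$ is rationally connected.

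The crux is to upgrade $K_X\equiv 0$ to $K_X\sim_{\Q}0$, which can be done in two ways. One may invoke abundance in the numerically trivial case (Nakayama, Gongyo): for a klt pair, numerical triviality of the log canonical divisor implies torsion. Alternatively, in a more self-contained way: a klt $X$ has rational singularities, so $H^1(X,\sO_X)\cong H^1(Y,\sO_Y)$ for a resolution $\pi\colon Y\to X$; and $Y$, being birational to the rationally connected $X$, is rationally connected, hence $H^1(Y,\sO_Y)=0$. Therefore $\Pic^0(X)=0$ and every numerically trivial line bundle on $X$ is torsion, so applying this to a Cartier multiple of $-K_X$ gives $-K_X\sim_{\Q}0$.

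To conclude, fix a model $X_A$ of $X$ over a finitely generated $\Z$-subalgebra $A$ of $k$. Because $\Q$-linear equivalence is preserved under reduction modulo $p$, we get $-K_{X_\mu}\sim_{\Q}0$ for a general closed point $\mu\in\Spec A$; but $X_\mu$ is globally $F$-regular, so $-K_{X_\mu}$ is big by Proposition \ref{rem:GFR}, and a $\Q$-linearly trivial divisor on a positive-dimensional variety is never big --- a contradiction. (One could instead contradict rational connectedness directly, since $-K_X\sim_{\Q}0$ gives $\kappa(X)=0$ while $\kappa(X)=-\infty$ for rationally connected $X$.) Hence $\nu(-K_X)\neq 0$.

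The main obstacle is precisely the step $K_X\equiv 0\Rightarrow K_X\sim_{\Q}0$: numerical triviality cannot be fed directly into reduction modulo $p$ since, unlike ampleness or $\Q$-linear equivalence, it need not be preserved --- the reduction $X_\mu$ may carry curves not defined over characteristic zero. Establishing that $K_X$ is actually torsion, via either the numerically trivial case of abundance or the vanishing $H^1(X,\sO_X)=0$ forced by rational connectedness, is the heart of the argument.
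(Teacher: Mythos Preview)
Your argument is correct and follows the same outline as the paper: assume $\nu(-K_X)=0$, deduce $-K_X\equiv 0$, upgrade to $-K_X\sim_{\Q}0$ via $H^1(X,\sO_X)=0$, and contradict bigness of the anti-canonical divisor after reduction modulo $p$. The paper's route is slightly shorter: it obtains $H^1(X,\sO_X)=0$ directly from Corollary~\ref{kodaira vanishing} (with $D=0$), avoiding the detour through rational connectedness and rational singularities, and then concludes by applying Corollary~\ref{cor:big} (since $-K_X\sim_{\Q}0$ implies $\mathrm{SB}(-K_X)=\emptyset$).
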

\begin{proof}
    Suppose by contradiction that $\nu(-K_X)= 0$.
    Since $H^1(X, \sO_X)=0$ by Corollary \ref{kodaira vanishing}, the numerical equivalence for $\Q$-Cartier $\Q$-Weil divisors on $X$ is the same as their $\Q$-linear equivalence, and therefore, $-K_X\sim_{\Q}0$.
    It then follows from Corollary \ref{cor:big} that $-K_X$ is big, which contradicts the fact that $\nu(-K_X)= 0$.
\end{proof}

\subsection{The case $\nu(-K_X)=2$}

In this subsection, we aim to prove the following proposition:

\begin{prop}\label{prop:nu=2}
    Let $X$ be a normal projective threefold over $k$ of globally $F$-regular type such that $-K_X$ is nef Cartier.
    Then $\nu(-K_X)\neq 2$.
\end{prop}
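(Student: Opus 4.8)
The plan is to argue by contradiction, assuming $\nu(-K_X)=2$, so that $(-K_X)^3=0$ while $(-K_X)^2\cdot H>0$ for an ample Cartier divisor $H$. Since $-K_X$ is Cartier, $X$ is Gorenstein, hence klt and rationally connected by Proposition~\ref{rem:GFRtype}; thus $h^i(\sO_X)=0$ for $i>0$, and by Corollary~\ref{kodaira vanishing} numerical and $\Q$-linear equivalence of $\Q$-Cartier divisors agree on $X$. Also $-K_X$ is effective: for general $\mu$, $X_\mu$ is globally $F$-regular, so $-K_{X_\mu}$ is big by Proposition~\ref{rem:GFR}, hence $h^0(X_\mu,\sO_{X_\mu}(-mK_{X_\mu}))>0$ for $m\gg0$, and $h^0(X,\sO_X(-mK_X))>0$ by Lemma~\ref{cohomology comparison}. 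Finally, by the contrapositive of Corollary~\ref{cor:big}, $\dim\mathrm{SB}(-K_X)\le1$ is impossible (it would make $-K_X$ big, against $\nu(-K_X)=2$), so $\mathrm{SB}(-K_X)$ has a divisorial component. The goal is to contradict this by proving that $-K_X$ is in fact semi-ample: once $-K_X$ is semi-ample, its reduction $-K_{X_\mu}$ is semi-ample for general $\mu$ (semi-ampleness \emph{is} preserved under reduction modulo $p$), hence semi-ample and big, hence $(-K_{X_\mu})^3>0$, contradicting $(-K_{X_\mu})^3=(-K_X)^3=0$ by Lemma~\ref{intersection}.

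To produce the divisors needed for this, fix $m$ sufficiently divisible and write $|{-}mK_X|=|M_m|+F_m$, with movable part $M_m$ and fixed part $F_m\ge0$ whose divisorial part is nonzero. The point of working on a threefold is that $M_m$, being movable, automatically has $\dim\mathrm{SB}(M_m)\le1$, which is exactly the hypothesis of Proposition~\ref{Introprop}; so, provided $M_m$ is nef, Corollaries~\ref{cor:nef vanishing} and~\ref{cor:abundance} apply to it and in particular $M_m$ is semi-ample. On the other hand, every member of $|{-}mK_X|$ contains each prime component $S$ of $F_m$, so from $(-K_X)^2\cdot(-mK_X)=m(-K_X)^3=0$ and the nonnegativity of $(-K_X)^2$ on prime divisors we obtain $(-K_X)^2\cdot S=0$, i.e.\ $(-K_X)|_S$ is nef but \emph{not} big on the surface $S$. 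The plan is then to play $M_m$ on $X$ against $(-K_X)|_S$ on the surfaces $S\subseteq\mathrm{SB}(-K_X)$: after reduction modulo $p$, where $X_\mu$ is of Fano type and hence a Mori dream space, the reductions of $M_m$ and of $(-K_X)|_S$ become semi-ample; transferring this back — keeping the reduction of $M_m$ nef via Lemma~\ref{lem:nef reduction} since $\dim\mathrm{SB}(M_m)\le1$, and applying the already-established two-dimensional case of Conjecture~\ref{Introconj} (together with Proposition~\ref{Introprop} on $S$, for which one needs $S$, or its normalization, to be of globally $F$-regular type) to the non-big nef divisor $(-K_X)|_S$ — should force $F_m$ to have no divisorial component, contradicting the previous paragraph.

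The crux, and the place I expect the real work, is precisely this transfer between the two characteristics, which is the difficulty this paper is built to overcome: nefness of $-K_X$, and of $M_m$, is available only over the characteristic-zero ground field, whereas the Mori-dream-space structure that makes movable divisors semi-ample lives only after reduction modulo $p$, and neither nefness nor bigness is stable under reduction (Section~\ref{big example}). Lemma~\ref{lem:nef reduction} is designed exactly to carry nefness of $M_m$ to characteristic $p$ (its stable base locus being at most a curve), after which Corollaries~\ref{cor:nef vanishing} and~\ref{cor:abundance} operate on $X$ itself, and the surfaces $S$ occurring in $\mathrm{SB}(-K_X)$ are disposed of by the surface case of the conjecture. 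I anticipate the two genuinely delicate points to be: first, verifying that $M_m$ is nef — or else replacing it by a nef divisor with the same (at most one-dimensional) stable base locus; and second, showing that the divisorial components $S$ of $\mathrm{SB}(-K_X)$, or their normalizations, are of globally $F$-regular type, by an adjunction-type argument from the global $F$-regularity of $X$.
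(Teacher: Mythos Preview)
Your setup matches the paper's up through the movable/fixed decomposition of $|{-}K_X|$, but the two ``delicate points'' you flag are exactly where the argument diverges from the paper, and neither of your proposed fixes goes through.

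For the nefness of the movable part, the paper does not stay on $X$: after first passing to a $\Q$-factorial terminalization (so Lemma~\ref{lem:flip} applies), it runs a sequence of \emph{flops} until the pushforward of $M$ becomes nef; flops are small, so globally $F$-regular type is preserved by Lemma~\ref{lem:small and GFR}, and then Corollary~\ref{cor:abundance} makes $M$ semi-ample. This model change is the missing idea for your first delicate point. For the second, your plan to work on the fixed components $S\subset\mathrm{SB}(-K_X)$ requires an ``adjunction-type'' statement that $S$ (or its normalization) is of globally $F$-regular type, and no such statement is available: global $F$-regularity does not restrict to arbitrary divisors (think of a smooth plane cubic in $\PP^2$). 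Even granting it, you have not explained how semi-ampleness of $(-K_X)|_S$ would force $F_m$ to have no divisorial component. The paper sidesteps all of this by using the semi-ample $M$ to produce a morphism $\phi\colon X\to Z$ with $1\le\dim Z\le 2$ (one checks $M^3=M^2\cdot K_X=M\cdot K_X^2=0$, and $h^0(-K_X)\ge 3$ via Riemann--Roch and \cite[Lemma~2.1]{Lazic-Peternell17}) and arguing on its \emph{fibers} rather than on fixed divisors. If $\dim Z=1$, a general fiber $F$ is a smooth surface with $-K_F=-K_X|_F$ nef and effective; Lemma~\ref{lem:nef reduction} on $F$ keeps $(-K_F)_\mu$ nef, while bigness of $-K_{X_\mu}$ restricts to the irreducible fiber $F_\mu$ by Lemma~\ref{lem:restriction}, forcing $(-K_{F_\mu})^2>0$ against $(-K_F)^2=K_X^2\cdot mM=0$. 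If $\dim Z=2$, the generic fiber is a genus-one curve, but general closed fibers of $\phi_\mu$ are globally $F$-regular by \cite[Theorem~2.1]{GLP$^+$15} and hence have $h^1=0$, contradicting Lemma~\ref{cohomology comparison}. So the route through fibers of the map defined by $M$, not through the fixed divisors, is what makes the argument close.
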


We start with two auxiliary lemmas.

\begin{lem}[\textup{cf.~\cite[Proposition 4.1]{Xie20}}]\label{lem:flip}
    Let $X$ be a Gorenstein terminal $\Q$-factorial projective threefold over $k$ such that $-K_X$ is nef and $h^0(X,\sO_X(-K_X))\geq 2$.
    Let $|-K_X|=|M|+B$ be the decomposition into the movable part $|M|$ and the fixed part $B$.
    Then there exists a sequence 
    \[
    \phi\colon X\dasharrow X_1 \dasharrow\cdots \dasharrow X_s \eqqcolon X' 
    \]
    of flops such that
    \begin{enumerate}[label=\textup{(\arabic*)}]
        \item $X'$ is a Gorenstein terminal $\Q$-factorial threefold,
        \item $h^0(X',\sO_{X'}(-K_{X'}))\geq 2$,
        \item $-K_{X'}$ is nef with $\nu(K_X)=\nu(K_{X'})$, 
        \item $M'\coloneqq \phi_{*}M$ is movable, and
        \item $M'\coloneqq \phi_{*}M$ is nef.
    \end{enumerate}
\end{lem}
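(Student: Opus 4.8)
The plan is to run a $K_X$-trivial minimal model program (i.e. a sequence of flops) on $X$ to make the movable part of the anticanonical system nef, while keeping track of the Gorenstein terminal $\Q$-factorial property throughout. First I would fix a resolution-free starting point: since $h^0(X,\sO_X(-K_X))\ge 2$, the movable part $|M|$ is a nonzero movable linear system, and $K_X + M + B \sim 0$ with $B\ge 0$ the fixed part. The idea is that $M \equiv -K_X - B$, so running the $(-K_X)$-MMP amounts, numerically, to making $M$ nef modulo the negative contributions coming from $B$; more precisely one runs the $K_X$-trivial MMP with scaling, at each step contracting or flopping a curve $C$ with $K_X\cdot C = 0$ along which $M$ (equivalently $-B$) is negative.

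The key steps, in order, are as follows. (i) Observe that because $-K_X$ is nef and $X$ is terminal $\Q$-factorial Gorenstein, any $K_X$-trivial extremal ray is either a divisorial contraction or a flopping contraction; I would want to argue that in our situation (starting from a smooth/terminal threefold with $-K_X$ nef) one can arrange that only flops occur — here one uses that a divisorial contraction of a $K_X$-trivial ray would force $-K_X$ to be non-nef on the image, or more carefully one appeals to the structure of the anticanonical model as in \cite[Proposition 4.1]{Xie20}. (ii) Flops preserve being Gorenstein terminal $\Q$-factorial (this is classical for threefold flops, Kollár–Mori), giving (1). (iii) Flops are crepant, so $-K_{X'}$ remains nef (one must check nefness is preserved — for a flop this follows because the flopped curve has $-K$ trivial on it, and the flop only changes intersection numbers with the flopped/flopping curves, all of which are $K$-trivial), and $\nu(-K_X) = \nu(-K_{X'})$ since the numerical dimension of a nef divisor is a crepant-birational invariant (intersection numbers $(-K)^i\cdot H^{3-i}$ are computed on a common resolution); this is (3). (iv) Since $-K_{X'}$ is still effective and $h^0$ is preserved under flops (because $H^0(X',\sO_{X'}(-K_{X'})) \cong H^0(X,\sO_X(-K_X))$ — flops are isomorphisms in codimension one and both varieties are normal), we get (2), and the movable part $M' = \phi_* M$ is still movable, giving (4). (v) Finally, termination: the sequence of $(-K_X)$-flops with scaling (or the $M$-MMP) terminates by special termination / terminality in dimension three, and at the end the pushforward $M'$ of $M$ is nef — this is (5), the whole point of running the program.

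The main obstacle I expect is \textbf{step (v) combined with (iv)}: one must set up the MMP so that it genuinely terminates \emph{and} so that the output divisor is exactly $\phi_* M$ with $M$ nef, rather than terminating with some residual negativity. The clean way is to run the MMP on the pair $(X, M)$ (or on $M$ itself as a movable divisor) with scaling of an ample divisor, noting $K_X \sim_\Q -M - B \sim_\Q$ (something $K$-trivial-ish only up to $B$) — so actually the correct device is to run the $K_X$-MMP (which is a sequence of $K_X$-flops since $-K_X$ is nef and no $K_X$-negative rays exist, while $K_X$-trivial rays that are $M$-negative must be flopped) and use that $-K_X \equiv M + B$ together with the negativity lemma to show $B$ is contracted into a $\phi$-nef position; one then invokes termination of threefold flops. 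A secondary subtlety is verifying that nefness of $-K$ is preserved at each flop — this needs the observation that a $K$-trivial flop changes the nef cone only across walls defined by $K$-trivial classes, so $-K$, being nef before, stays nef after. I would cite \cite{Xie20} and standard threefold MMP (Kollár–Mori, Kawamata) for these points rather than reprove them.
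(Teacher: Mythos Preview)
Your overall shape is right and close to the paper's --- run a sequence of flops, check each property is preserved, invoke termination of terminal threefold flops --- but step (i), where you must show that every step \emph{is} a flop, has two genuine gaps.

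First, your parenthetical in (v) that ``$-K_X$ is nef and no $K_X$-negative rays exist'' is false: nefness of $-K_X$ says $K_X\cdot C\le 0$ for every curve $C$, so $K_X$-negative extremal rays may well exist (and are in fact used elsewhere in the paper). You therefore cannot assume from the outset that the extremal ray you contract is $K_X$-trivial; this must be \emph{proved}.

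Second, your stated reason for excluding divisorial contractions (``a divisorial contraction of a $K_X$-trivial ray would force $-K_X$ to be non-nef on the image'') is not a valid argument. The paper's mechanism is different and is exactly what you are groping for when you mention the $(X,M)$-MMP: fix $H\in|M|$ and small $\epsilon>0$ with $(X,\epsilon H)$ terminal, and use \cite[Lemma~2.5]{Xie20} to produce a $(K_X+\epsilon H)$-negative extremal contraction with $H\cdot\ell<0$ for every contracted curve. Since $|M|$ is \emph{movable}, curves with $M\cdot\ell<0$ cannot cover a divisor, so the contraction is small. Now comes the input you omit entirely: by Benveniste \cite{Benveniste}, Gorenstein terminal $\Q$-factorial threefolds admit no flipping contractions. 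Together with $K_X\cdot\ell\le 0$ this forces $K_X\cdot\ell=0$, i.e.\ the contraction is a flop.

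Once this is established, the remainder of your outline --- preservation of the Gorenstein terminal $\Q$-factorial property by \cite[Theorem~6.15]{KM98}, preservation of $h^0$ and movability because flops are isomorphisms in codimension one, nefness of $-K_{X^+}$ via $K_{X^+}=(\phi^+)^*K_Y$ with $K_Y$ Cartier by the cone theorem, and termination by \cite[Corollary~6.19]{KM98} --- matches the paper.
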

\begin{proof}
The proof is quite similar to that of \cite[Proposition 4.1]{Xie20}, but we include it here for the sake of completeness. 

    We may assume that $M$ is not nef.
    Fix a member $H\in |M|$ and a positive rational number $0< \epsilon \ll 1$ such that $(X, \epsilon H)$ is terminal.
    Since $-K_X$ is nef and $H$ is not nef, by \cite[Lemma 2.5]{Xie20} there exists a $(K_X+\epsilon H)$-negative extremal contraction $\phi\colon X\to Y$ such that $\ell \cdot H<0$ for every contracted curve $\ell$.
    As $|M|$ is movable, we observe that $\phi$ is small birational.
    Flipping contractions do not occur on Gorenstein terminal $\Q$-factorial threefolds (see \cite[Th\'eor\`em 0]{Benveniste}), so we have $K_X\cdot \ell=0$.
    In particular, $\phi$ is a flopping contraction.
    
    Let $\phi^{+}\colon X^{+}\to Y$ be the flop of $\phi$ (which exists by \cite[Theorem 6.14]{KM98}), and set $\psi \coloneqq (\phi^{+})^{-1}\circ \phi \colon X\dasharrow X^{+}$.
    Then $X^{+}$ is $\Q$-factorial Gorenstein terminal as shown in \cite[Theorem 6.15]{KM98}.
    By taking into account that $K_X\cdot \ell=0$ and $\phi_{*}K_X=K_Y$, it follows from the cone theorem \cite[Theorem 3.7]{KM98} that $K_Y$ is Cartier and $K_X=\phi^{*}K_Y$. 
    By pushing forward via $\psi$, this implies $K_{X^{+}}=(\phi^{+})^{*}K_{Y}$. 
    Thus, $-K_{X^{+}}$ is nef with $\nu(-K_X)=\nu( -K_Y)=\nu( -K_{X^{+}})$,  and 
    \[
    h^0(X^{+},\sO_{X^{+}}(-K_{X^{+}}))=h^0(X,\sO_X(-K_X))\geq 2.
    \]
    Since $\psi$ is small, there is a one-to-one correspondence between Weil divisors on $X$ and those on $X'$, given by the pushforward via $\psi$.
    In particular, 
   $M^{+}\coloneqq \psi_{*}M$ is movable.
   Therefore, the conditions (1)--(4) are satisfied.

   Finally, by  termination of three-dimensional terminal flops \cite[Corollary 6.19]{KM98}, the condition (5) is satisfied after repeating this procedure finitely many times.
\end{proof}

\begin{lem}\label{lem:restriction}
    Let $f\colon X\to Y$ be a surjective morphism between normal projective varieties, and suppose that $Y$ is a smooth curve.
    Let $D$ be a big $\Q$-Cartier $\Q$-Weil divisor on $X$ and $F$ be an irreducible fiber of $f$. 
   Then $D|_F$ is big.
\end{lem}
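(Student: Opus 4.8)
The plan is to write $D$ as an ample $\Q$-Cartier divisor plus an effective divisor (which is possible by the definition of bigness recalled in Section~2), and then to trade away the one fiber component of the effective part that could cause trouble, using that fibers of $f$ over distinct points are disjoint on $X$ but numerically equivalent. Since $Y$ is a curve and $F$ is an \emph{irreducible} fiber, writing $y_{0}:=f(F)$ we have $f^{*}y_{0}=aF$ for some integer $a\ge 1$; in particular $F$ is $\Q$-Cartier, being $\tfrac1a f^{*}y_{0}$. I would then fix any closed point $y_{1}\in Y$ with $y_{1}\ne y_{0}$ (such a point exists since $Y$ is a curve).

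Next, suppose $D\sim_{\Q}H+E$ with $H$ ample $\Q$-Cartier and $E\ge 0$. Let $c:=\operatorname{mult}_{F}(E)\ge 0$ and $E':=E-cF\ge 0$, so that $F\not\subseteq\Supp(E')$, and note $E'$ is $\Q$-Cartier because $F$ is. Since $y_{0}$ and $y_{1}$ are numerically equivalent on the curve $Y$, we get $cF=\tfrac{c}{a}f^{*}y_{0}\equiv\tfrac{c}{a}f^{*}y_{1}=:G$ on $X$, where $G\ge 0$ is a $\Q$-Cartier divisor supported on $f^{-1}(y_{1})$, which is disjoint from $F=f^{-1}(y_{0})_{\mathrm{red}}$. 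Hence
\[
D\sim_{\Q}H+cF+E'\equiv H+G+E'\quad\text{on }X.
\]
Restricting to $F$ and using $G|_{F}=0$ gives $D|_{F}\equiv H|_{F}+E'|_{F}$, the sum of an ample $\Q$-Cartier divisor on $F$ and an effective divisor on $F$ (the restriction of $E'$, whose support does not contain $F$); in particular $H|_{F}+E'|_{F}$ is big. Since ampleness, and therefore bigness, of a $\Q$-Cartier divisor depends only on its numerical class (Kleiman's criterion), $D|_{F}$ is big.

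The one point requiring care will be that $\Q$-linear and numerical equivalence of $\Q$-Cartier divisors on $X$ restrict to $F$: the former because $m(A-B)$ is Cartier for suitable $m$, so $\sO_{X}(m(A-B))\cong\sO_{X}$ pulls back to $\sO_{F}$; the latter because $(A-B)|_{F}\cdot C=(A-B)\cdot C=0$ for every curve $C\subseteq F$. This is exactly why it matters that $F$, $cF$ and $E'$ are each $\Q$-Cartier, which rests on $aF=f^{*}y_{0}$ being Cartier, i.e.\ on $F$ being an irreducible fiber. If $F$ happens not to be normal one reads ``big'' via the normalization $\nu\colon\bar{F}\to F$; the argument is unchanged since $\nu^{*}$ preserves ampleness, effectivity and numerical equivalence, and $D|_{F}$ is big if and only if $\nu^{*}(D|_{F})$ is. So the genuine content is the numerical trade $cF\equiv G$; everything else is routine bookkeeping.
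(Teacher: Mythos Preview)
Your argument is correct and follows the same strategy as the paper's proof: write $D\sim_{\Q}H+E$ with $H$ ample and $E\ge 0$, peel off the $F$-component $cF$ of $E$, and trade it for a divisor supported on a different fiber, so that the restriction to $F$ becomes ample plus effective. The one genuine difference is in how the trade is justified. The paper asserts that ``since $Y$ is a smooth curve, all fibers of $f$ are linearly equivalent'' and then works with $\Q$-linear equivalence throughout; you instead use only numerical equivalence of fibers and then invoke that bigness of a $\Q$-Cartier divisor is a numerical property. Your version is the more careful one: fibers over distinct closed points of a smooth curve $Y$ need not be $\Q$-linearly equivalent unless $Y\cong\PP^1$ (which is what occurs in every application of the lemma in the paper), but they are always numerically equivalent, so your argument goes through for arbitrary $Y$ as stated in the lemma.
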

\begin{proof}
    Take an ample $\Q$-Cartier $\Q$-Weil divisor $A$ and an effective $\Q$-Weil divisor $E$ on $X$ such that $D\sim_{\Q} A+E$.
    Since $Y$ is a smooth curve, all fibers of $f$ are linearly equivalent.  
    If $E$ contains $F$ in its support, then by replacing $F$ with another fiber $F'$ of $f$, we can take an effective $\Q$-Weil divisor $E'$ on $X$ that is $\Q$-linearly equivalent to $E$ and does not contain $F$ in its support. 
    Therefore, we may assume that $E$ does not contain $F$ in its support. 
    Then $D|_F\sim_{\Q} A|_F+E'|_F$ is big because $A|_F$ is ample and $E'|_F$ is effective.
\end{proof}

\begin{proof}[Proof of Proposition \ref{prop:nu=2}]
    Since $X$ is klt by Proposition \ref{rem:GFRtype} (2) and $K_X$ is Cartier, it follows that $X$ is canonical.
    We take a $\Q$-factorial terminalization $f\colon Y\to X$, that is, $f$ is a projective birational morphism such that $Y$ is $\Q$-factorial terminal and $K_Y=f^{*}K_X$ (such an $f$ exists by \cite[P.~360, Theorem (b)]{Reid(Young)}).
    Note that  $\nu(-K_Y)=\nu(f^{*}(-K_X))=\nu(-K_X)$, %by \cite[Section 2.2]{Lazic-Peternell1}
    and $Y$ is of globally $F$-regular type by Lemma \ref{lem:crepant and GFR}. Thus, replacing $X$ with $Y$, we may assume that $X$ is $\Q$-factorial Gorenstein terminal.

    Suppose by contradiction that $\nu(-K_X)=2$.    
    By a variant of Kawamata-Viehweg vanishing \cite[Lemma 2.1]{Lazic-Peternell17}, we have $h^2(X, \sO_Y(-K_X))=0$.
    Since $(-K_X)^3=0$ by assumption and $\chi(X,\sO_X)=1$ by Proposition \ref{kodaira vanishing}, 
    the Riemann-Roch theorem \eqref{RR for -K} implies that
    \[
    h^0(X, \sO_X(-K_X))\geq \chi(X, \sO_X(-K_X))=3.
    \]
    
    Let $|-K_X|=|M|+B$ be the decomposition into the movable part $|M|$ and the fixed part $B$.
    By Lemma \ref{lem:flip} and Lemma \ref{lem:small and GFR}, after repeatedly replacing $X$ with its flop, we can assume that $M$ is nef.
    Then, by Corollary \ref{cor:abundance}, $M$ is semi-ample. 
    We fix an integer $m \ge 1$ such that $|mM|$ is base point free.
    Let $\phi\colon X\to Z$ be the morphism associated to the complete linear system $|mM|$. 
    As in the proof of \cite[Lemma 4.2]{Xie20}, we can observe that 
    \[M^3=M^2\cdot K_X=M\cdot K_X^2=0,\] and in particular, $M$ is not big.
    Since $h^0(X,\sO_X(M))=h^0(X, \sO_X(-K_X))\geq 3$, it follows that $1\leq \dim Z\leq 2$.
    Let $F$ be a general fiber of $\phi$. 

    \textbf{Step 1($\dim Z=1$ case).}
    First, we consider the case $\dim Z=1$.
    By Bertini theorem and since $h^0(X, \sO_X(-K_X))\geq 3$, we have that $F$ is a smooth irreducible member of $|mM|$ and $-K_{F}=-K_{X}|_{F}$ is nef and linearly equivalent to an effective divisor.
    Given a model $(X_A, F_A)$ of $(X, F)$ over a finitely generated $\Z$-subalgebra $A$ of $k$, we fix a general closed point $\mu \in \Spec A$.  
    By Lemma \ref{lem:nef reduction}, 
    \[-K_{F_{\mu}}=-(K_{X_{\mu}}|_{F_{\mu}})= -(K_{X}|_{F})_{\mu}=(-K_{F})_{\mu}\] 
    is nef.
    Since  
    $-K_{X_{\mu}}$ is big by Proposition \ref{rem:GFR} and 
    $F_{\mu}$ is irreducible, 
    $-K_{F_{\mu}}=-K_{X_{\mu}}|_{F_{\mu}}$ is big by Lemma \ref{lem:restriction}.
    This leads to  
    \[
    0<(-K_{F_{\mu}})^2=(-K_F)^2=(-K_X)^2\cdot F=K_X^2 \cdot mM=0,
    \]
    which is a contradiction.

    \textbf{Step 2($\dim\,Z=2$ case).}
    Next, we discuss the case $\dim Z=2$. 
    Since $F$ is a smooth irreducible curve and $\deg K_F=K_X\cdot F=K_X\cdot (mM)^2=0$, $F$ is an elliptic curve. 
    Let $\eta$ be the generic point of $Z$ and $X_{\eta}$ denote the generic fiber of $\phi$. 
    Then $h^1(X_{\eta}, \sO_{X_{\eta}})=1$. 
    
    On the other hand, suppose that $(X_A, Z_A)$ is a model of $(X,Z)$ over a finitely generated $\Z$-subalgebra $A$ of $k$, and fix a general closed point $\mu \in \Spec A$. 
    Since $X_{\mu}$ is globally $F$-regular, 
    it follows from \cite[Theorem 2.1]{GLP$^+$15} that a general closed fiber $G$ of $\phi_{\mu}:X_{\mu} \to Z_{\mu}$ is globally $F$-regular and, in particular, $H^1(G, \sO_G)=0$ by Proposition \ref{thm:nef vanishing for GFR}. 
    Then 
    \[H^1((X_{\eta})_{\mu}, \sO_{(X_{\eta})_{\mu}})=H^1((X_{\mu})_{\eta_{\mu}},\sO_{(X_{\mu})_{\eta_{\mu}}})=0,\] 
    where $\eta_{\mu}$ is the generic point of $Z_{\mu}$. 
    This contradicts Lemma \ref{cohomology comparison}. 
\end{proof}

\subsection{The case $\nu(-K_X)=1$}

In this subsection, we aim to show the following proposition:

\begin{prop}\label{prop:nu=1}
    Let $X$ be a smooth projective threefold over $k$ of globally $F$-regular type with $-K_X$ nef.
    Then $\nu(-K_X)\neq 1$.
\end{prop}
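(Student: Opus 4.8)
The plan is to derive a contradiction from the assumption $\nu(-K_X)=1$ by combining the Riemann--Roch estimate with a fibration analysis, in the same spirit as the $\nu=2$ case. First I would reduce to a $\Q$-factorial Gorenstein terminal model. By Proposition \ref{rem:GFRtype}~(2) the threefold $X$ is klt, hence canonical since $K_X$ is Cartier; taking a $\Q$-factorial terminalization $f\colon Y\to X$ (which is crepant) preserves $\nu(-K_X)$ and, by Lemma \ref{lem:crepant and GFR}, global $F$-regular type, so we may assume $X$ is $\Q$-factorial Gorenstein terminal. Next, since $\nu(-K_X)=1$ we have $(-K_X)^2=0$ as a numerical class paired with any divisor? — more precisely $(-K_X)^3=0$ and $(-K_X)^2\cdot H=0$ for ample $H$, but $(-K_X)^2$ need not vanish numerically; still, $(-K_X)^3=0$, and together with $\chi(X,\sO_X)=1$ from Corollary \ref{kodaira vanishing} and the vanishing $h^2(X,\sO_X(-K_X))=0$ (via \cite[Lemma 2.1]{Lazic-Peternell17}, using $\nu(-K_X)\ge 1$ so $-K_X$ is not numerically trivial), the Riemann--Roch formula \eqref{RR for -K} gives $h^0(X,\sO_X(-K_X))\ge \chi(X,\sO_X(-K_X))=3$.

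Then I would run the argument of Lemma \ref{lem:flip}: write $|-K_X|=|M|+B$ and, after finitely many flops (permissible by Lemma \ref{lem:small and GFR}), arrange that the movable part $M$ is nef, hence semi-ample by Corollary \ref{cor:abundance}. Let $\phi\colon X\to Z$ be the associated morphism for $|mM|$. Since $M\le -K_X$ numerically in an appropriate sense and $\nu(-K_X)=1$, one checks $M^2\cdot(\text{anything})$-type intersections force $\nu(M)\le 1$, and since $h^0(\sO_X(M))\ge 3$ the image $Z$ satisfies $\dim Z=1$ (the case $\dim Z\ge 2$ would give $\nu(M)\ge 2$). So $\phi\colon X\to Z$ is a fibration over a smooth curve with general fiber a smooth surface $S$, and $-K_S=-K_X|_S$ is nef (restriction of a nef divisor) and numerically nontrivial or trivial according to further analysis; in any case $S$ is a smooth projective surface of globally $F$-regular type (by \cite[Theorem 2.1]{GLP$^+$15} applied after reduction mod $p$, as in Step 2 of Proposition \ref{prop:nu=2}, or directly since global $F$-regular type of fibers is known), hence of Fano type by the two-dimensional case of Conjecture \ref{Introconj}, so $-K_S$ is big.

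The contradiction should then come from intersection numbers. Since $-K_X|_S=-K_S$ is big on the surface $S$, we get $(-K_X)^2\cdot S>0$; but $S\equiv_{\text{num}} mM$ (a fiber class is $mM$ up to the pullback of a point), so $(-K_X)^2\cdot S = (-K_X)^2\cdot mM$, and I would show this equals a positive multiple of $(-K_X)^3$ or otherwise forces $\nu(-K_X)\ge 2$, contradicting $\nu(-K_X)=1$. Concretely: $-K_X\cdot F=\deg(-K_S\cdot F)$? — rather, pairing $-K_X$ with curves in fibers and with a multisection, the condition $\nu(-K_X)=1$ means $(-K_X)$ is positive only in the ``fiber direction'' or only in the ``horizontal direction''; combined with $-K_X|_S$ big (two-dimensional positivity) this is incompatible with $(-K_X)^2\cdot(\text{ample})=0$. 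As in Step 1 of Proposition \ref{prop:nu=2}, one would pass to reduction modulo $p$: $-K_{X_\mu}$ is big (Proposition \ref{rem:GFR}), $-K_{X_\mu}|_{F_\mu}$ is big by Lemma \ref{lem:restriction} since $F_\mu$ is a fiber over a curve, giving $(-K_{X_\mu})^2\cdot F_\mu>0$, hence $(-K_X)^2\cdot F>0$ by Lemma \ref{intersection}; but $F\sim mM$ modulo pullbacks from $Z$ and $(-K_X)\cdot M\cdot(\text{nef})$ vanishes by $\nu(-K_X)=1$ type considerations, yielding $(-K_X)^2\cdot F=0$, a contradiction.

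The main obstacle I expect is the case $\dim Z=1$ bookkeeping: unlike the $\nu=2$ case where $M^3=M^2\cdot K_X=M\cdot K_X^2=0$ pinned everything down, here with $\nu(-K_X)=1$ one must carefully separate the ``$M$ big or not'' subcases and handle the possibility that $\dim Z=1$ but the general fiber $S$ has $-K_S$ only nef (not automatically big before invoking the surface case of the conjecture), and also rule out that $\phi$ contracts $-K_X$ to zero entirely. The delicate point is establishing $(-K_X)^2\cdot F>0$ \emph{and} $(-K_X)^2\cdot F=0$ from the two sides; the first uses the surface case of Conjecture \ref{Introconj} (or global $F$-regularity of $S$ plus Proposition \ref{rem:GFR}) together with Lemma \ref{lem:restriction} after reduction mod $p$, while the second uses $\nu(-K_X)=1$ to force $(-K_X)^2\equiv 0$ on all fiber-supported cycles, which requires knowing $-K_X$ is numerically a pullback from $Z$ up to the fixed part $B$ — this last claim is where I anticipate needing the most care, likely via the fact that a nef divisor of numerical dimension $1$ on a threefold fibered over a curve is numerically proportional to a fiber when its Iitaka-type fibration coincides with $\phi$.
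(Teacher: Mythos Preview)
Your approach has a genuine gap at the outset: the vanishing $h^2(X,\sO_X(-K_X))=0$ is not available when $\nu(-K_X)=1$. The result you cite from \cite{Lazic-Peternell17} is invoked in the paper only in the $\nu=2$ case; for $\nu=1$ the authors work hard (Lemmas \ref{lem:nu=1,basic facts} and \ref{lem:nu=1,birat}) to obtain merely $h^2(X,\sO_X(-K_X))\le 1$ via an explicit blow-up description, which would be pointless if the direct vanishing held. (The underlying Kawamata--Viehweg-type statement gives $H^i(X,\sO_X(-K_X))=0$ only for $i>\dim X-\nu(-K_X)$, so on a threefold with $\nu=1$ one gets just $H^3=0$.) Without an $h^2$ bound your Riemann--Roch inequality $h^0(-K_X)\ge 3$ collapses, you cannot extract a nontrivial movable part $|M|$, and the fibration $\phi$ never gets built.

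The paper's argument is structurally different. Since $K_X$ is not nef one takes a $K_X$-negative extremal contraction $f\colon X\to Y$ and splits into cases. If $\dim Y<3$ (Lemma \ref{lem:nu=1,non-birat}) one restricts $-K_X$ to a surface $T$ pulled back from the base and reaches a contradiction via surface Riemann--Roch combined with the reduction-mod-$p$ bigness trick. If $\dim Y=3$ (Lemma \ref{lem:nu=1,birat}), smoothness of $X$ forces $f$ to be the blow-up of a \emph{smooth} curve $C\subset Y$; one first shows $-K_Y$ is nef and big, then uses the blow-up exact sequence together with $\deg(K_C+K_Y|_C)=0$ to get $h^2(X,\sO_X(-K_X))\le 1$, whence Riemann--Roch gives $h^0(-K_X)\ge 2$, so $\kappa(-K_X)=\nu(-K_X)=1$, $-K_X$ is semi-ample by \cite{Kawamata(gooddivisors)}, and Corollary \ref{cor:big} yields the contradiction. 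Controlling $h^2$ is thus achieved \emph{through} the contraction analysis, not prior to it; and once $h^0\ge 2$ is in hand, the route via $\kappa=\nu$ and Corollary \ref{cor:big} is shorter than the fiberwise intersection-number contradiction you sketch.
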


First, we consider the case of Mori fiber spaces. 

\begin{lem}\label{lem:nu=1,non-birat}
    Let $X$ be a Gorenstein terminal $\Q$-factorial projective threefold over $k$ of globally $F$-regular type such that $-K_X$ is nef. 
    Let $f\colon X\to Y$ be a $K_X$-negative extremal  contraction such that
 $\dim Y<3$. Then $\nu(-K_X)\neq 1$.
\end{lem}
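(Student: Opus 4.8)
The plan is to split the argument according to the dimension of the target $Y$ of the Mori fiber space structure $f\colon X\to Y$, using in each case that the general fiber of $f$ is of globally $F$-regular type (hence a low-dimensional weak Fano, by the already-settled low-dimensional cases of the conjecture and Proposition~\ref{rem:GFR}) together with the hypothesis $-K_X$ nef to control the numerical behaviour of $-K_X$ on $X$. The cases are: $\dim Y=0$, where $X$ itself is a Fano threefold with $\rho(X)=1$ and so $-K_X$ is ample, forcing $\nu(-K_X)=3$; $\dim Y=1$, where the general fiber $F$ is a del Pezzo surface and one analyzes $-K_X$ restricted to $F$ and along the fibers; and $\dim Y=2$, where the general fiber is a smooth rational curve (a conic bundle) and $-K_X$ is $f$-ample, so $\nu(-K_X)\ge 1$ with equality only if $-K_X$ is, up to numerical equivalence, pulled back from $Y$, which then has to be handled.

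First I would dispose of $\dim Y=0$: then $\rho(X)=1$ and $-K_X$ is ample (an extremal contraction to a point on a $\Q$-factorial variety of Picard number one), so $\nu(-K_X)=3\ne 1$. For $\dim Y=1$: since $X$ is of globally $F$-regular type, so is the general fiber $F$ by \cite[Theorem~2.1]{GLP$^+$15}, and $F$ is a smooth (by terminality, generic smoothness in char $0$) surface of globally $F$-regular type with $-K_F=-K_X|_F$ nef — hence, by the surface case of Conjecture~\ref{Introconj} (equivalently, by Proposition~\ref{rem:GFRtype}(2) and classification of weak del Pezzo surfaces), $-K_F$ is nef and big, so $(-K_X)^2\cdot F=(-K_F)^2>0$. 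Writing $F\sim f^*(\mathrm{pt})$, this gives $(-K_X)^2\cdot F>0$, so already $\nu(-K_X)\ge 2$, hence $\ne 1$. For $\dim Y=2$: $f$ is a conic bundle, $-K_X$ is $f$-ample, so $-K_X\cdot \ell>0$ for a fiber curve $\ell$; thus $\nu(-K_X)\ge 1$, and if $\nu(-K_X)=1$ then for an ample $H$ on $X$ one has $(-K_X)^2\cdot H=0$, i.e. $(-K_X)^2\equiv 0$ as a cycle — I would then intersect with a general $f$-ample divisor or pull back an ample divisor $H_Y$ from $Y$ to derive a contradiction with $-K_X$ being $f$-ample (e.g. $(-K_X)\cdot f^*H_Y\cdot(-K_X)$ computes, fiberwise, $\deg(-K_X|_\ell)\cdot(\text{something positive})>0$), again forcing $\nu(-K_X)\ge 2$.

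The main obstacle I expect is the $\dim Y=2$ (conic bundle) case: one needs to upgrade "$-K_X$ is $f$-ample" to a lower bound on $\nu(-K_X)$ that rules out $\nu=1$, and this requires comparing the nef class $-K_X$ against classes pulled back from the base $Y$. The cleanest route is probably: choose an ample divisor $H_Y$ on $Y$; then $-K_X+m f^*H_Y$ is ample on $X$ for $m\gg 0$ (relative ampleness plus Kleiman), so $(-K_X)\cdot(-K_X+mf^*H_Y)^2>0$; expanding and using $(-K_X)^3\ge 0$, $(-K_X)^2\cdot f^*H_Y = (-K_X|_\ell)\cdot H_Y\cdot(\text{fiber class})$ which is a positive multiple of $H_Y^2>0$ on $Y$, one concludes $(-K_X)^2\not\equiv 0$, hence $\nu(-K_X)\ge 2$. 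I would also need to handle potential non-flatness/singular fibers of the conic bundle, but terminality of $X$ and $f$-ampleness of $-K_X$ keep the fibers $1$-dimensional, which suffices for these intersection-number estimates.

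\begin{rem}
The proof of Lemma~\ref{lem:nu=1,non-birat} proceeds by cases on $\dim Y \in \{0,1,2\}$, in each case combining the nefness of $-K_X$ with the fact that the general fiber of $f$ is of globally $F$-regular type (hence a low-dimensional weak Fano) to show $\nu(-K_X)\ge 2$ or $\nu(-K_X)=3$; the conic bundle case $\dim Y=2$ uses relative ampleness of $-K_X$ together with an ample pullback from $Y$.
\end{rem}
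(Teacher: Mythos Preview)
Your cases $\dim Y\le 1$ are fine and essentially match the paper (for $\dim Y=1$ the paper is even shorter: since $f$ is a $K_X$-negative extremal contraction, $-K_X$ is $f$-ample, so $-K_F=-K_X|_F$ is already ample and $(-K_F)^2>0$ without invoking the surface case of the conjecture).

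The gap is in your $\dim Y=2$ argument. Your expansion of $(-K_X)\cdot(-K_X+mf^*H_Y)^2>0$ gives
\[
(-K_X)^3+2m\,(-K_X)^2\cdot f^*H_Y+m^2\,(-K_X)\cdot (f^*H_Y)^2>0,
\]
but $(f^*H_Y)^2$ is numerically $(H_Y^2)\cdot[\ell]$ with $\ell$ a fiber, so the last term equals $m^2(H_Y^2)(-K_X\cdot\ell)>0$; the inequality is then automatic for large $m$ and tells you nothing about the middle term. Likewise your formula ``$(-K_X)^2\cdot f^*H_Y=(-K_X|_\ell)\cdot H_Y\cdot(\text{fiber class})$'' is not correct: by the conic-bundle formula $f_*(K_X^2)\equiv -(4K_Y+\Delta)$, one has $(-K_X)^2\cdot f^*H_Y=-(4K_Y+\Delta)\cdot H_Y$, which can vanish (e.g.\ when $4K_Y+\Delta\equiv 0$). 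So relative ampleness of $-K_X$ alone does \emph{not} force $\nu(-K_X)\ge 2$, and a purely numerical argument of this type cannot close the case.

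This is exactly where the paper works much harder and uses the globally $F$-regular type hypothesis in an essential way: it first shows $Y$ is a smooth rational surface, then splits into $\rho(Y)>1$ (where one fibers $Y$ over $\PP^1$ and studies $T=$ a fiber of $X\to\PP^1$) and $\rho(Y)=1$ (where $Y\cong\PP^2$ and $T=f^{-1}(\ell)$ for a general line). In both cases the crucial step is proving $h^0(T,\sO_T(-K_X|_T))=0$ via reduction mod $p$: one uses Lemma~\ref{lem:nef reduction} (nefness passes to characteristic $p$ because the stable base locus on a surface has dimension $\le 1$) together with Lemma~\ref{lem:restriction} and bigness of $-K_{X_\mu}$ to force $(-K_X|_T)^2>0$, contradicting $\nu(-K_X)=1$. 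One then feeds this vanishing into Riemann--Roch on $T$ (and, when $Y=\PP^2$, a further cohomology chase on $X$) to get the contradiction. You should replace your conic-bundle paragraph with an argument along these lines.
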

    \begin{proof}
    Suppose by contradiction that $\nu(-K_X)=1$. In particular, this implies that $\dim Y>0$.
     If $\dim Y=1$, then  \[(-K_X)^2\cdot F=(-K_F)^2>0\] for a general fiber $F$ of $f$, which contradicts the assumption that $\nu(-K_X)=1$.
     
    In what follows, we derive a contradiction by assuming that $\dim Y=2$.
    In this case, $Y$ is a smooth rationally connected surface by \cite[Theorem 7]{Cutkosky88}, Lemma \ref{lem:small and GFR} and Proposition \ref{thm:GFRtype to RC} (3).

    \textbf{Step 1(The case $\rho(Y)>1$).}\,\,
    First, we consider the case $\rho(Y)>1$.
    In particular, there exists a morphism $g\colon Y\to \PP^1$ whose general fiber is a smooth rational curve.
    Set $h\coloneqq g\circ f\colon X\to \PP^1$, and let $T$ denote a general fiber of $h$.
    Since $X$ is terminal, it has only isolated singularities, and thus $T$ is smooth.
    Set $D\coloneqq -K_X|_T$.
    \begin{cln}\label{cln1}
        $h^0(T,\sO_T(D))=0$.
    \end{cln}
    \begin{proof}
        Suppose by contradiction that $h^0(T,\sO_T(D))>0$.
        Let $(X_A, T_A, D_A)$ be a model of $(X, T, D)$ over a finitely generated $\Z$-subalgebra $A$ of $k$, and fix a general closed point $\mu \in \Spec A$. 
        By Lemma \ref{lem:nef reduction}, $D_{\mu}$ is nef.
    Since $X_{\mu}$ is globally $F$-regular by assumption, the anti-canonical divisor $-K_{X_{\mu}}$ is big by Proposition \ref{rem:GFR}.
    As $T_{\mu}$ is irreducible, it follows from Lemma \ref{lem:restriction} that $D_{\mu}=-K_{X_{\mu}}|_{T_{\mu}}$ is big.
    Now, we obtain
    \[
    0<D_{\mu}^2= D^2=(-K_X)^2\cdot T=0,
    \]
    a contradiction. Thus, our claim follows.
    \end{proof}

    Since $T$ is rationally connected,  
    \[H^2(T,\sO_T(D))\cong H^0(T,\sO_T(K_T-D))=H^0(T,\sO_T(2K_T))=0\] 
    and $\chi(T,\sO_T)=1$.
    By the Riemann-Roch theorem for surfaces, we obtain
    \begin{align*}
        0\overset{\text{Claim}\,\,\ref{cln1}}{=}h^0(T, \sO_T(D))&=h^0(T, \sO_T(D))+h^2(T, \sO_T(D))\\
    &\geq \chi(T,\sO_T(D))\\
    &=D^2+\chi(T,\sO_T)\\
    &= 1,
    \end{align*}
    a contradiction.
   
    \textbf{Step 2(The case $\rho(Y)=1$).}\,\,
    Next, we deal with the case $\rho(Y)=1$.
    Since $Y$ is smooth and rationally connected, we have $Y\cong\PP^2$.
    Let $\ell \subset Y=\PP^2$ be a general line and $T\coloneqq f^{-1}(\ell)$ denote the pullback of $\ell$. 
    Since $X$ has only isolated singularities, $T$ is smooth. 
    We set $D\coloneqq -K_X|_T$.
    
    \begin{cln}\label{cln2}
        $h^0(T,\sO_T(D))=0$.
    \end{cln}
    \begin{proof}
    This follows from essentially the same arguments as in the proof of Claim \ref{cln1}. 
    \end{proof}

    By definition, 
    \[
    D=-K_X|_T=-(K_X+T)|_T+T|_T=-K_T+F,
    \]
    where $F$ is a general fiber of $f|_T:T \to \ell$. 
    Using this and the rationally connectedness of $T$, we have
    \[
    h^2(T,\sO_T(D))=h^0(T,\sO_T(K_T-D))=h^0(T,\sO_T(2K_T-F))=0.
    \]
It then follows from Claim \ref{cln2} and the Riemann-Roch theorem for surfaces that 
    \begin{align*}
        -h^1(T,\sO_T(D))=\chi(X,\sO_T(D))&=\frac{1}{2}D^2+\frac{1}{2}D\cdot(-K_T)+\chi(T, \sO_T)\\
        &=\frac{1}{2}(-K_X)\cdot (-(K_X+T)) \cdot T+1\\
        &=\frac{1}{2}K_X\cdot F+1\\
        &=\frac{1}{2}K_T \cdot F +1\\
        &=0, 
    \end{align*}
    where the fact that $D^2=0$, as a consequence of $\nu(-K_X)=1$, is used for the third equality.
By the exact sequence
\[
H^1(T,\sO_T(D)) \to H^1(T, \sO_T(D+F))\to H^1(F, \sO_F(D+F)),
\]
together with the vanishing 
\[
H^1(F, \sO_F(D+F))=H^1(\sO_F(-K_F))=H^1(\PP^1, \sO_{\PP^1}(2))=0,
\]
we obtain that $H^1(T,\sO_T(D+F))=0$.

Note that $-K_X$ is $f$-ample and $T$ is the pullback of an ample divisor $\ell$, so there exists $0<\epsilon\ll 1$ such that $\epsilon (-K_X)+T$ is ample.
Since $-K_X$ is nef, we know that $-2K_X+T=(2-\epsilon)(-K_X)+\epsilon(-K_X)+T$ is ample, and therefore, by Kodaira vanishing, 
    \[
    H^i(X,\sO_X(-K_X+T))=H^i(X, \sO_X(K_X-2K_X+T))=0
    \]
    for all $i>0$.
    Considering the exact sequence
    \[
    0\to \sO_X(-K_X)\to \sO_X(-K_X+T) \to \sO_T(-K_X+T)=\sO_T(D+F)\to 0,
    \]
    we see that $H^2(X, \sO_X(-K_X))=H^1(T,\sO_T(D+F))=0$.
    
Finally, note that $H^0(X,\sO_X(-K_X))=0$, since $D=-K_X|_T$ is not linearly equivalent to an effective divisor by Claim \ref{cln2}. 
The Riemann-Roch theorem for threefolds \eqref{RR for -K} then implies that  
    \[
    0=h^0(X,\sO_X(-K_X))+h^2(X,\sO_X(-K_X))\geq \chi(X,\sO_X(-K_X))=3,
    \]
which leads to a contradiction.
    \end{proof}

Next, we discuss the case of divisorial contractions. 
Combining similar arguments as in \cite[Theorem 2.1]{Bauer-Peternell} and \cite[Lemma 2.1]{Xie05}, we obtain the following lemma: 

\begin{lem}\label{lem:nu=1,basic facts}
    Let $X$ be a Gorenstein terminal $\Q$-factorial projective threefold over $k$ such that $-K_X$ is nef and $\nu(-K_X)=1$.
    Let $f\colon X\to Y$ be a $K_X$-negative extremal contraction such that 
    $\dim Y=3$. Then the following hold:
    \begin{enumerate}[label=$(\arabic*)$]
        \item $f$ is the blow-up along a locally complete intersection curve $C$ on $Y$ and $Y$ is smooth along $C$.
        \item $K_Y^2\equiv C$.
        \item $-K_Y\cdot C\ge -2$ and $-K_Y\cdot C'\ge0$ for every curve $C'$ satisfying $C'\neq C$. Moreover, if $X$ is smooth, then $-K_Y\cdot C=2(p_a(C)-1)$.
        \item $H^2(Y,\sO_Y(-K_Y))=0$.
    \end{enumerate}
\end{lem}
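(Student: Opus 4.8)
The statement to prove is Lemma \ref{lem:nu=1,basic facts}, which collects four structural facts about a $K_X$-negative divisorial contraction $f\colon X\to Y$ from a Gorenstein terminal $\Q$-factorial threefold with $-K_X$ nef and $\nu(-K_X)=1$. The plan is to run through the classification of such divisorial contractions (Mori's theorem), use the Gorenstein hypothesis to kill the worst types, then extract the numerical consequences from $\nu(-K_X)=1$, and finally deduce the cohomology vanishing. I will not assume smoothness of $X$ except where stated in (3).

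\textbf{Step 1: pinning down the contraction type (part (1)).} Since $f$ is a $K_X$-negative extremal divisorial contraction on a terminal $\Q$-factorial threefold with $Y$ projective of dimension $3$, Mori's classification of extremal rays (cf.\ \cite[Theorem 3.3.1, etc.]{Mori} as recalled in \cite{KM98}) applies. Because $K_X$ is Cartier, the discrepancy of the exceptional divisor $E$ over $Y$ is a positive integer; terminality forces it to be $1$, and then $E$ is contracted to a curve $C\subset Y$ rather than to a point. Among the cases where $E$ contracts to a curve, the Gorenstein (Cartier-$K$) condition rules out the non-Gorenstein cases on $Y$; the remaining possibility is that $f$ is the blow-up of $Y$ along $C$, with $Y$ smooth along $C$ and $C$ a locally complete intersection curve. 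This is exactly \cite[Theorem 2.1]{Bauer-Peternell} combined with \cite[Lemma 2.1]{Xie05}, and I would simply invoke these, checking that their hypotheses (Gorenstein terminal, $-K_X$ nef) are met here.

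\textbf{Step 2: numerical identities (parts (2) and (3)).} With $f\colon X\to Y$ the blow-up of a smooth-along-$C$ l.c.i.\ curve, the standard formula gives $K_X=f^*K_Y+E$, hence $-K_X=f^*(-K_Y)-E$. Squaring and using the projection formula together with $E^2\cdot f^*(\text{anything})$ and $E^3$ computed from the conormal bundle of $C$, one gets $(-K_X)^2=f^*(K_Y^2)-f^*(-K_Y)\cdot 2E+E^2$ in the appropriate Chow group; pushing forward to $Y$ and using $f_*E^2\equiv -C$ (the class of the center) yields $f_*((-K_X)^2)\equiv K_Y^2-C$ type relations — but the cleaner route is: $\nu(-K_X)=1$ means $(-K_X)^2\equiv 0$ numerically as a $1$-cycle, so pushing forward the identity for $(-K_X)^2$ to $Y$ gives $K_Y^2\equiv C$, which is (2). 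For (3), intersect $K_Y^2\equiv C$ with $-K_Y$: $(-K_Y)\cdot C=(-K_Y)\cdot K_Y^2=-(-K_Y)^3\cdot$(sign bookkeeping); more directly, restrict to $E\cong \PP(\mathcal{N})$ over $C$ and use that $-K_X|_E$ is nef together with the adjunction/relative-canonical formula to bound $-K_Y\cdot C$ from below by $-2$, with equality type behaviour governed by $p_a(C)$ when $X$ is smooth (so $E$ is a smooth ruled surface and $-K_X\cdot(\text{fiber})=1$, forcing the genus formula $-K_Y\cdot C=2p_a(C)-2$); nonnegativity of $-K_Y\cdot C'$ for curves $C'\ne C$ follows because $f^*(-K_Y)=-K_X+E$ and any such $C'$ lifts to a curve on which $-K_X$ is nef and $E$ is effective meeting it suitably.

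\textbf{Step 3: the vanishing $H^2(Y,\sO_Y(-K_Y))=0$ (part (4)).} I would relate cohomology on $Y$ to cohomology on $X$ via $Rf_*\sO_X=\sO_Y$ (and more refined vanishing of higher $R^if_*$ for $f$ a blow-up of a smooth-along-$C$ l.c.i.\ curve, or of $Rf_*\sO_X(-K_X+E)$-type sheaves). Concretely, $f^*\sO_Y(-K_Y)=\sO_X(-K_X+E)$, so $H^2(Y,\sO_Y(-K_Y))$ injects into $H^2(X,\sO_X(-K_X+E))$ via the Leray spectral sequence once one checks $R^1f_*\sO_X(-K_X+E)=0$ (which holds because on fibers this is $H^1$ of a line bundle of nonnegative degree on $\PP^1$). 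Then use the short exact sequence $0\to\sO_X(-K_X)\to\sO_X(-K_X+E)\to\sO_E(-K_X+E)\to 0$: the middle term's $H^2$ is controlled by $H^2(X,\sO_X(-K_X))$ — which vanishes by \cite[Lemma 2.1]{Lazic-Peternell17} since $-K_X$ is nef with $\nu(-K_X)=1>0$ — and by $H^1(E,\sO_E(-K_X+E))$, which vanishes since $\sO_E(-K_X+E)=\sO_E(-K_X)|_E\otimes\sO_E(E)|_E$ restricts on each ruling fiber to $\sO_{\PP^1}$ of degree $\ge -1$... — here I would instead argue via $\sO_E(-K_X+E)|_{\text{fiber}}$ having degree $0$ and conclude $H^1(E,-)$ vanishes from $R^if_*$ on $E$ being a line bundle on the curve $C$ with high enough cohomological degree, or appeal directly to the analogous step in \cite[Theorem 2.1]{Bauer-Peternell}. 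The upshot is a vanishing squeeze giving $H^2(Y,\sO_Y(-K_Y))=0$.

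\textbf{Main obstacle.} The delicate point is Step 2(3), specifically getting the sharp lower bound $-K_Y\cdot C\ge -2$ and the clean equality $-K_Y\cdot C=2(p_a(C)-1)$ in the smooth case. This requires a careful analysis of the restriction of $-K_X$ to the exceptional divisor $E$ (a conic bundle, or $\PP^1$-bundle when $X$ is smooth, over $C$), using that $-K_X$ is nef there, together with the relative Euler sequence / adjunction on $E$; singular fibers of $E\to C$ (in the merely-terminal case) are where the bookkeeping is most fragile. I expect to import most of this from \cite[Theorem 2.1]{Bauer-Peternell} and \cite[Lemma 2.1]{Xie05}, verifying that the hypothesis $\nu(-K_X)=1$ is exactly what makes their intersection-theoretic normalizations go through.
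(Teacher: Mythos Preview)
Your plan for parts (2) and (3) is essentially the paper's, but there are genuine gaps in (1) and (4).

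\textbf{Part (1).} Your discrepancy argument is incorrect. Even with $K_X$ Cartier, divisorial contractions to a point exist on Gorenstein terminal threefolds (e.g.\ the blow-up of a smooth point has discrepancy $2$), so ``discrepancy is a positive integer, hence $1$, hence the target is a curve'' fails on both counts. The paper rules out the contraction-to-a-point case using $\nu(-K_X)=1$: by Cutkosky's classification of extremal contractions on Gorenstein terminal threefolds, if $f(E)$ is a point then $E$ has Picard number one or $E\cong\PP^1\times\PP^1$; since $-K_X\cdot\ell>0$ for every contracted curve $\ell$, the restriction $-K_X|_E$ is then ample, giving $(-K_X)^2\cdot E>0$, contrary to $\nu(-K_X)=1$. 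You never invoke $\nu(-K_X)=1$ in this step, and your fallback to \cite[Theorem 2.1]{Bauer-Peternell} does not cover the merely Gorenstein terminal case.

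\textbf{Part (4).} Your route requires $H^2(X,\sO_X(-K_X))=0$, which you attribute to \cite[Lemma 2.1]{Lazic-Peternell17}. But that Kawamata--Viehweg variant gives $H^i(X,K_X+L)=0$ only for $i>\dim X-\nu(L)$; with $L=-2K_X$, $\nu(L)=1$, $\dim X=3$, one gets vanishing only for $i>2$. So $H^2(X,\sO_X(-K_X))=0$ is \emph{not} available here, and your exact-sequence squeeze does not close (you also need $H^2(E,\sO_E(-K_X+E))=0$, not $H^1$; this one does vanish since $(-K_X+E)|_E$ is pulled back from the curve $C$, but that alone is not enough). The paper avoids this entirely by working on $Y$ via Serre duality: $H^2(Y,\sO_Y(-K_Y))\cong H^1(Y,\sO_Y(2K_Y))$, and then uses a very ample $H$ on $Y$ with $C\not\subset H$. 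By (3), $-K_Y|_H$ is nef, and by (2), $(-K_Y|_H)^2=C\cdot H>0$, so $-K_Y|_H$ is nef and big and Kawamata--Viehweg gives $H^1(H,\sO_H(2K_Y|_H))=0$; combined with $H^1(Y,\sO_Y(2K_Y-H))=0$ (Kodaira, as $-2K_Y+H$ is ample), the restriction sequence yields $H^1(Y,\sO_Y(2K_Y))=0$. The key point is that (2) and (3) make $-K_Y$ nef and big \emph{after restriction to a general hyperplane}, which is exactly what lets the vanishing go through on $Y$ even though it fails on $X$.
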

\begin{proof}
(1)\,
    Since flipping contractions do not occur on Gorenstein terminal $\Q$-factorial threefolds, $f$ contracts a divisor $E$ on $X$. 
Suppose that $f(E)$ is a point on $Y$. 
Then $E$ has Picard number one or $E\cong \mathbb{P}^1_k\times \mathbb{P}^1_k$ by \cite[Theorem 5]{Cutkosky88}. 
In this case, since $-K_X|_E\cdot \ell>0$ for every curve $\ell$ contracted by $f$, it follows that $-K_X|_E$ is an ample divisor on $E$. 
In particular, $(-K_X)^2\cdot E=(-K_X|_E)^2>0$, which contradicts the fact that $\nu(-K_X)=1$. 
Therefore, $f(E)$ is a curve on $Y$, and it then follows from \cite[Theorem 4]{Cutkosky88} that $f\colon X\to Y$ is the blow-up along a locally complete intersection curve $C$ on $Y$ and $Y$ is smooth along $C$.

(2)\, Note that $K_X=f^*K_Y+E$ by (1) and $K_X^2 \equiv 0$ by assumption. Therefore, 
\[
0 \equiv K_X^2=K_X \cdot f^*K_Y+ K_X \cdot E, 
\]
which implies, by pushing forward via $f$, that 
\[
K_Y^2 \equiv f_*(-K_X \cdot E). 
\]

Since $C$ is a locally complete intersection, 
$\mathcal{I}_C/\mathcal{I}^2_C$ is locally free and $E\cong \mathbb{P}(\mathcal{I}_C/\mathcal{I}^2_C)$, where $\mathcal{I}_C$ is the ideal sheaf of $C$ on $Y$. 
Let $F$ be a general fiber of $f|_E:E\to C$.
As in \cite[Lemma 2.1]{Xie05}, we can choose a curve $C_1$ on $E$ such that $\Pic(E)\cong \mathbb{Z}C_1\oplus \mathbb{Z}F$, $f|_{C_1}\colon C_1\to C$ is a birational surjective morphism, and 
\[
-K_X|_E\equiv C_1+bF\,\,\text{and}\,\,E|_E\equiv -C_1+\mu F
\]
for some integers $b$ and $\mu$. 
Thus, $K_Y^2 \equiv f_*(-K_X \cdot E) \equiv f_*C_1 \equiv C$. 

(3)\,
First, take a curve $C'$ on $Y$ such that $C'\neq C$, and let $C'_X$ be the proper transform of $C'$ on $X$.
Then 
\[-K_Y\cdot C'=f^{*}(-K_Y)\cdot C'_X =(-K_X+E)\cdot C'_X\geq 0
\]
since $-K_X$ is nef.

Next, let $e\coloneqq -C_1^2$. Then 
\[2b-e=(-K_X|_E)^2=K_X^2 \cdot E=0.\]
By an argument similar to the proof of \cite[Lemma 2.1]{Xie05}, we have 
\[
-2C_1+(\mu-b)F \equiv (K_X+E)|_E \equiv K_E \equiv -2C_1+(2 p_a(C_1)-2-e)F, 
\]
so that $b+\mu=2(p_a(C_1)-1)$. 
Therefore,
\[
-K_Y\cdot C=-K_X \cdot C_1+E \cdot C_1=b+\mu=2(p_a(C_1)-1)\ge -2.
\]

Finally, if $X$ is smooth, then $C$ must be smooth by \cite[Theorem 3.3]{Mori}, so $C_1\cong C$. 
Thus, $-K_Y\cdot C=2(p_a(C)-1)$.

(4)\,
Note that $Y$ is Gorenstein terminal $\Q$-factorial by (1). 
Take a very ample divisor $H$ on $Y$ such that $-2K_Y+H$ is ample and $C\not\subset H$.
By Bertini theorem, $H$ is smooth and irreducible, and  it follows from Kodaira vanishing that $H^1(Y,\sO_Y(2K_Y-H))=0$.
On the other hand, by (3), we have that $-K_Y\cdot C'\ge 0$ for every curve $C'$ on $Y$ satisfying $C'\neq C$.
Since $C\not\subset H$, we know that $-K_Y|_H$ is nef.
Furthermore, by (2), we have 
\[
(-K_Y|_H)^2=(-K_Y)^2\cdot H=C\cdot H>0, 
\]
so $-K_Y|_H$ is nef and big. 
Applying Kawamata-Viehweg vanishing, we have $H^1(H, \sO_H(2K_Y|_H))=0$, and the exact sequence
\[
H^1(Y,\sO_Y(2K_Y-H)) \to H^1(Y, \sO_Y(2K_Y))\to H^1(H, \sO_H(2K_Y|_H))
\]
implies that $H^2(Y,\sO_Y(-K_Y))\cong H^1(Y, \sO_Y(2K_Y)=0$.  
\end{proof}

\begin{lem}\label{lem:nu=1,birat(singular)}
    Let $X$ be a Gorenstein terminal $\Q$-factorial projective threefold over $k$  of globally $F$-regular type such that $-K_X$ is nef and $\nu(-K_X)=1$.
    Let $f\colon X\to Y$ be a $K_X$-negative extremal contraction such that $\dim Y=3$. Then $-K_Y$ is nef and big.
\end{lem}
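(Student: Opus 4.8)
The plan is to combine the structural facts recorded in Lemma~\ref{lem:nu=1,basic facts} with the numerical identity they force. First I would note that $Y$ is of globally $F$-regular type by Lemma~\ref{lem:crepant and GFR}~(1) (since $f$ is a morphism), hence rationally connected by Proposition~\ref{rem:GFRtype}~(3), and Gorenstein terminal $\Q$-factorial by Lemma~\ref{lem:nu=1,basic facts}~(1). The crucial point is that $K_Y^2\equiv C$ by Lemma~\ref{lem:nu=1,basic facts}~(2), so
\[
(-K_Y)^3=-K_Y\cdot K_Y^2=-K_Y\cdot C,
\]
and by (the proof of) Lemma~\ref{lem:nu=1,basic facts}~(3) this integer equals $2(p_a(C_1)-1)$, hence lies in $\{-2,0,2,4,\dots\}$; moreover $-K_Y\cdot C'\ge 0$ for every curve $C'\ne C$. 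If $-K_Y\cdot C\ge 2$ we are immediately done: $-K_Y$ is nef by Lemma~\ref{lem:nu=1,basic facts}~(3) and $(-K_Y)^3>0$, so $-K_Y$ is big. It therefore remains to rule out $-K_Y\cdot C\in\{-2,0\}$.

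For the bookkeeping in both remaining cases I would use that $\chi(Y,\sO_Y)=1$ (Corollary~\ref{kodaira vanishing}), $h^2(Y,\sO_Y(-K_Y))=0$ (Lemma~\ref{lem:nu=1,basic facts}~(4)), and $h^3(Y,\sO_Y(-K_Y))=h^0(Y,\sO_Y(2K_Y))=0$ (as $Y$ is rationally connected), so the Riemann--Roch formula~\eqref{RR for -K} gives
\[
h^0(Y,\sO_Y(-K_Y))\ \ge\ \chi(Y,\sO_Y(-K_Y))\ =\ \tfrac12(-K_Y\cdot C)+3\ \ge\ 2 .
\]
In the case $-K_Y\cdot C=0$ the divisor $-K_Y$ is nef, and a direct computation on $E$ using $-K_X|_E\equiv C_1+bF$ and $E|_E\equiv -C_1+\mu F$ (as in the proof of Lemma~\ref{lem:nu=1,basic facts}) shows $(-K_Y)^2\cdot H=H\cdot C>0$ for an ample divisor $H$ on $Y$; since $(-K_Y)^3=0$ this means $\nu(-K_Y)=2$, contradicting Proposition~\ref{prop:nu=2}.

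The main obstacle is the case $-K_Y\cdot C=-2$, where $-K_Y$ is not even nef. Here $\deg\big(\sO_Y(-K_Y)|_C\big)=-2<0$ and $C$ is integral, so the restriction $H^0(Y,\sO_Y(-K_Y))\to H^0(C,\sO_Y(-K_Y)|_C)$ vanishes; combined with $f_*\sO_X(-E)=\mathcal{I}_C$ this gives $h^0(X,\sO_X(-K_X))\ge h^0(Y,\sO_Y(-K_Y))\ge 2$. Now I would apply Lemma~\ref{lem:flip} to $X$, using Lemma~\ref{lem:small and GFR} to propagate globally $F$-regular type through the flops: after finitely many flops we may assume the movable part $M$ of $|-K_X|$ is nef, with $h^0(X,\sO_X(M))\ge 2$, $\nu(-K_X)=1$, and $X$ still Gorenstein terminal $\Q$-factorial of globally $F$-regular type. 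Since $M$ is movable, $\dim\mathrm{SB}(M)\le 1$, so $M$ is semi-ample by Corollary~\ref{cor:abundance}; writing $-K_X=M+B$ with $B\ge 0$ and using that $-K_X$ and $M$ are nef together with $(-K_X)^2\cdot H=0$, one gets $M^2\cdot H=0$, so $\nu(M)=1$ and the morphism $\phi\colon X\to Z$ defined by $|mM|$ has $\dim Z=1$.

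A general fibre $F$ of $\phi$ is then a smooth surface ($X$ has isolated singularities and $\mathrm{char}\,k=0$) with $-K_F\equiv -K_X|_F$ nef and $(-K_F)^2=(-K_X)^2\cdot F=0$ (as $-K_X$ is nef with $\nu(-K_X)=1$, so $(-K_X)^2\equiv 0$), and $-K_F$ is effective, so $\dim\mathrm{SB}(-K_F)\le 1$. Reducing modulo $p$ and invoking \cite[Theorem~2.1]{GLP$^+$15} exactly as in Step~2 of the proof of Proposition~\ref{prop:nu=2}, a general fibre becomes a globally $F$-regular surface for general $\mu$, so $-K_{F_\mu}$ is big by Proposition~\ref{rem:GFR}; but Lemma~\ref{lem:nef reduction} forces $-K_{F_\mu}$ to be nef with $\nu(-K_{F_\mu})=\nu(-K_F)\le 1$, i.e.\ $(-K_{F_\mu})^2=0$, while a nef and big divisor on a surface has positive self-intersection --- a contradiction. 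I expect this last case, and in particular the interplay between Lemma~\ref{lem:flip}, Corollary~\ref{cor:abundance}, Lemma~\ref{lem:nef reduction} and the reduction of the general fibre, to be where essentially all of the work lies; the other two cases are short once the identity $(-K_Y)^3=-K_Y\cdot C$ and Proposition~\ref{prop:nu=2} are in hand.
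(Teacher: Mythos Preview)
Your proof is correct, but it takes a longer route than the paper in the non-nef case. Both arguments begin identically: Lemma~\ref{lem:nu=1,basic facts} gives $K_Y^2\equiv C$, the vanishing $h^2(Y,\sO_Y(-K_Y))=0$, and the Riemann--Roch estimate $h^0(Y,\sO_Y(-K_Y))\ge \tfrac12(-K_Y\cdot C)+3\ge 2$, which you transport to $h^0(X,\sO_X(-K_X))\ge 2$ via $\deg(-K_Y|_C)<0$. From here the paper is much shorter: since $\kappa(-K_X)\ge 1=\nu(-K_X)$, Kawamata's theorem \cite[Theorem~6.1]{Kawamata(gooddivisors)} gives $-K_X$ semi-ample, so $\mathrm{SB}(-K_X)=\varnothing$ and Corollary~\ref{cor:big} forces $-K_X$ big, contradicting $\nu(-K_X)=1$. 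Once nefness is established, the paper simply notes $\nu(-K_Y)\ge 2$ (from $K_Y^2\equiv C$) and invokes Proposition~\ref{prop:nu=2}.

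Your approach instead avoids the external input of Kawamata's good-divisors theorem: you flop via Lemma~\ref{lem:flip}, make the movable part $M$ semi-ample via Corollary~\ref{cor:abundance}, compute $\nu(M)=1$, and then run a fibre-by-fibre reduction-mod-$p$ argument. This is essentially a reproduction of the mechanism of Step~1 of Proposition~\ref{prop:nu=2} inside the present lemma, and it works; what it buys is that the whole proof stays within the toolkit built in Sections~\ref{vanishing section}--5, at the price of substantially more bookkeeping. One small correction: the reduction step you invoke is really the Step~1 argument (via Lemma~\ref{lem:restriction}: $-K_{X'_\mu}$ big and $F_\mu$ an irreducible fibre over a smooth curve $\Rightarrow -K_{F_\mu}$ big), not Step~2; if you insist on \cite[Theorem~2.1]{GLP$^+$15} you must check that the reduction $F_\mu$ of your fixed general fibre is itself a general fibre of $\phi_\mu$, which is an extra spreading-out argument. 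Your treatment of the cases $-K_Y\cdot C\ge 2$ and $-K_Y\cdot C=0$ coincides with the paper's bigness step.
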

\begin{proof}
By Lemma \ref{lem:nu=1,basic facts} (1), $f\colon X\to Y$ is the blow-up along a locally complete intersection curve $C$ on $Y$. 

First, we will show that $-K_Y$ is nef. 
Assume to the contrary that  $-K_Y$ is not nef.
Then, by Lemma \ref{lem:nu=1,basic facts} (3), we have $-2 \le -K_Y\cdot C<0$. 
By the exact sequence
    \[
    0\to f_{*}\sO_X(-K_X)=\mathcal{I}_C\otimes \sO_Y(-K_Y) \to \sO_Y(-K_Y)\to \sO_C(-K_Y)\to 0,
    \]
we obtain $h^0(X,\sO_X(-K_X))=h^0(Y, \sO_Y(-K_Y))$.
By Lemma \ref{lem:nu=1,basic facts} (4), we have $H^2(Y,\sO_Y(-K_Y))=0$. 
Also, $Y$ is of globally $F$-regular type by Lemma \ref{lem:small and GFR} (1)
and in particular, $\chi(Y, \sO_Y)=1$ by Corollary \ref{kodaira vanishing}.
Therefore, by Lemma \ref{lem:nu=1,basic facts} (2), the Riemann-Roch theorem \eqref{RR for -K} yields that
\begin{align*}
    h^0(X,\sO_X(-K_X))=h^0(Y, \sO_Y(-K_Y)) &\ge \chi(Y, \sO_Y(-K_Y))\\
    &=\frac{1}{2}(-K_Y) \cdot C+3\\
    & \ge 2. 
\end{align*}
    Since $\nu(-K_X)=1$, we have the equality $\nu(-K_X)=\kappa(-K_X)$, which implies that $-K_X$ is semi-ample by \cite[Theorem 6.1]{Kawamata(gooddivisors)}.
    It then follows from Corollary \ref{cor:big} that $-K_X$ is big, a contradiction.  Thus, $-K_Y$ must be nef. 

Next, we  show that $-K_Y$ is big. 
Since $K_Y^2\equiv C$ by Lemma \ref{lem:nu=1,basic facts} (2), we have $\nu(-K_Y)\geq 2$.
Applying Proposition \ref{prop:nu=2} to $Y$, we see that $\nu(-K_Y)\neq 2$, and therefore, $-K_Y$ is big.
\end{proof}

In particular, when $X$ is smooth, the following result holds:

\begin{lem}\label{lem:nu=1,birat}
    Let $X$ be a smooth projective threefold over $k$ of globally $F$-regular type such that $-K_X$ is nef.
    Let $f\colon X\to Y$ be a $K_X$-negative extremal  contraction  such that $\dim Y=3$.
    Then $\nu(-K_X)\neq 1$.
\end{lem}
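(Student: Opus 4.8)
The plan is to deduce this from the singular version already established in Lemma \ref{lem:nu=1,birat(singular)}, combined with the Mori-fiber-space case in Lemma \ref{lem:nu=1,non-birat}. Since $X$ is smooth, it is in particular Gorenstein terminal and $\Q$-factorial, so all the hypotheses of the previous lemmas are met. Thus Lemma \ref{lem:nu=1,birat(singular)} applies: if $f\colon X\to Y$ is a $K_X$-negative extremal divisorial contraction with $\dim Y=3$, then $-K_Y$ is nef and big, i.e. $Y$ is weak Fano. First I would record that $Y$ is of globally $F$-regular type by Lemma \ref{lem:small and GFR} (1), and that $Y$ is Gorenstein terminal $\Q$-factorial by Lemma \ref{lem:nu=1,basic facts} (1).

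Next I would argue by contradiction: suppose $\nu(-K_X)=1$. Since $-K_Y$ is nef and big, $\nu(-K_Y)=3$ and, because $Y$ is of Fano type (Proposition \ref{rem:GFRtype}, as $-K_Y$ is big and $Y$ is klt being Gorenstein terminal), $-K_Y$ is semi-ample by the base point free theorem; in any case $(-K_Y)^3>0$. Now I would compute $(-K_X)^3$ in terms of data on $Y$ using $K_X=f^{*}K_Y+E$ and the description of $E$ from Lemma \ref{lem:nu=1,basic facts}. Expanding,
\[
(-K_X)^3=(f^{*}(-K_Y)-E)^3=(-K_Y)^3-3(-K_Y)^2\cdot E\cdot f^{*}\dots
\]
— more cleanly, using $f^{*}(-K_Y)\cdot E^2$, $f^{*}(-K_Y)^2\cdot E=0$, and the standard intersection formulas on the blow-up of a lci curve (so that $(-K_X)^3=(-K_Y)^3 + (\text{correction involving } -K_Y\cdot C \text{ and } p_a(C))$, with the correction computed from $E^3$ and $E^2\cdot f^{*}(-K_Y)=-(-K_Y\cdot C)$). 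One then checks the right-hand side is strictly positive: $(-K_Y)^3>0$, and by Lemma \ref{lem:nu=1,basic facts} (3) in the smooth case $-K_Y\cdot C=2(p_a(C)-1)$, which controls the correction term. This forces $(-K_X)^3>0$, contradicting $\nu(-K_X)=1$ (which gives $(-K_X)^3=0$).

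An alternative and perhaps cleaner route, avoiding the explicit triple-intersection bookkeeping, is: once we know $Y$ is weak Fano of globally $F$-regular type, $-K_Y$ is big, hence $-K_X=f^{*}(-K_Y)-E$ has the same numerical dimension as... no — this is exactly the subtlety, since subtracting $E$ can drop $\nu$. So I would instead invoke the already-proven cases: if $\nu(-K_X)=1$, then $\kappa(-K_X)=\nu(-K_X)$ would follow from abundance-type input, $-K_X$ semi-ample, hence $-K_X$ big by Corollary \ref{cor:big} — contradiction. Concretely, since $-K_Y$ is semi-ample (weak Fano of Fano type) and $f$ is birational, $f^{*}(-K_Y)$ is semi-ample, and one shows $h^0(X,\sO_X(-mK_X))=h^0(Y,\sO_Y(-mK_Y))$ grows like $m^3$ via the ideal-sheaf sequence $0\to f_{*}\sO_X(-mK_X)\to \sO_Y(-mK_Y)\to(\text{torsion on }C)\to0$, so $\kappa(-K_X)=3>1$, contradicting $\nu(-K_X)=1$.

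The main obstacle I expect is the numerical bookkeeping on the blow-up of a (possibly singular-looking, but in the smooth case genuinely smooth) lci curve: correctly evaluating $E^3$, $E^2\cdot f^{*}(-K_Y)$ and assembling $(-K_X)^3$, and then using $-K_Y\cdot C=2(p_a(C)-1)\ge -2$ together with $(-K_Y)^3>0$ to conclude positivity. The second route sidesteps this but requires care that $h^0(Y,\sO_Y(-mK_Y))$ genuinely has cubic growth — which is immediate from $-K_Y$ being big — and that the cokernel in the ideal-sheaf sequence is supported on the curve $C$ and hence contributes only $O(m)$, so it does not affect the growth rate. Either way, the contradiction with $\nu(-K_X)=1$ closes the proof.
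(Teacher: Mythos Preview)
Both routes have genuine gaps.

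\textbf{First route.} The triple intersection does not come out positive; it vanishes identically. Under the hypothesis $\nu(-K_X)=1$ one has $K_Y^2\equiv C$ by Lemma \ref{lem:nu=1,basic facts} (2), so $(-K_Y)^3=(-K_Y)\cdot C=2(p_a(C)-1)$. Using $K_X=f^*K_Y+E$ and the standard blow-up formulas ($f^*(-K_Y)^2\cdot E=0$, $f^*(-K_Y)\cdot E^2=-(-K_Y\cdot C)$, $E^3=-\deg N_{C/Y}=-(2g-2+(-K_Y\cdot C))$ with $g=p_a(C)$), one finds
\[
(-K_X)^3=(-K_Y)^3-2(-K_Y\cdot C)+(2g-2)=0.
\]
So the ``correction controlled by $-K_Y\cdot C=2(p_a(C)-1)$'' exactly cancels $(-K_Y)^3$; there is no contradiction here.

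\textbf{Second route.} The cokernel of $f_*\sO_X(-mK_X)\hookrightarrow \sO_Y(-mK_Y)$ is $\sO_Y(-mK_Y)\otimes \sO_Y/\mathcal{I}_C^{\,m}$, supported on the $m$-th infinitesimal neighbourhood of $C$, not on $C$ itself. Its associated graded pieces are $\operatorname{Sym}^j N_{C/Y}^*\otimes\sO_C(-mK_Y)$ for $0\le j\le m-1$, of ranks $1,2,\dots,m$; a Riemann--Roch count gives $\chi$ of this cokernel equal to $\frac{(-K_Y)^3}{6}m^3+O(m^2)$, i.e. \emph{the same} leading term as $h^0(Y,\sO_Y(-mK_Y))$. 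So the naive lower bound $h^0(X,-mK_X)\ge h^0(Y,-mK_Y)-h^0(\text{cokernel})$ yields nothing cubic, and your claim that the cokernel ``contributes only $O(m)$'' is incorrect.

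\textbf{What the paper does.} The argument works at the level of $m=1$. Smoothness of $X$ forces $C$ smooth, so $\deg K_C=2p_a(C)-2=-K_Y\cdot C$ by Lemma \ref{lem:nu=1,basic facts} (3), whence $\deg(K_C+K_Y|_C)=0$ and $h^2(X,\sO_X(-K_X))=h^1(C,\sO_C(-K_Y))\le 1$ (using Kawamata--Viehweg on the weak Fano $Y$ for the identification). Riemann--Roch \eqref{RR for -K} then gives $h^0(X,\sO_X(-K_X))\ge 2$, so $\kappa(-K_X)\ge 1=\nu(-K_X)$; Kawamata's theorem makes $-K_X$ semi-ample, and Corollary \ref{cor:big} makes it big, the desired contradiction.
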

\begin{proof} 
     We use the same notation as in the proof of Lemma \ref{lem:nu=1,birat(singular)}.
    Note that $C$ is smooth because $X$ is smooth (see \cite[Theorem 3.3]{Mori}).  

    Suppose by contradiction that $\nu(-K_X)=1$.
    Since $Y$ is weak Fano by Lemma \ref{lem:nu=1,birat(singular)}, we have $H^i(Y,\sO_Y(-K_Y))=0$ for all $i>0$ by Kawamata-Viehweg vanishing, leading to the isomorphism 
    \[
    H^1(C,\sO_C(-K_Y)) \cong H^2(X,\sO_X(-K_X)). 
    \]
    On the other hand, by Lemma \ref{lem:nu=1,basic facts} (3), 
    \begin{equation}\label{equation for C smooth}      
   \deg K_C=2p_a(C)-2=(-K_Y)\cdot C, 
    \end{equation}
    so that $\deg(K_C+K_Y|_C)=0$. 
    Therefore, 
\[
    h^2(X, \sO_X(-K_X)) = h^1(C,\sO_C(-K_Y))=h^0(C,\sO_C(K_C+K_Y|_C)) \le 1.
\]
It follows from the Riemann-Roch theorem \eqref{RR for -K} that $\chi(X, \sO_X(-K_X))=3$.  
Thus, $h^0(X,\sO_X(-K_X))\geq 2$, and in particular, $\kappa(-K_X)\geq 1$.
Since $\nu(-K_X)=1$ by assumption, we have $\nu(-K_X)=\kappa(-K_X)$, and therefore $-K_X$ is semi-ample by \cite[Theorem 6.1]{Kawamata(gooddivisors)}.
As a result, $-K_X$ must be big by Corollary \ref{cor:big}, leading to a contradiction. 
\end{proof}

\begin{rem}\label{smoothness rem}\,
Lemma \ref{lem:nu=1,birat} is the only statement for which we need the smoothness assumption. If $X$ is singular, then $C$ may also be singular, and \eqref{equation for C smooth} fails in general (see Lemma \ref{lem:nu=1,basic facts} (3)). 
\end{rem}

\begin{proof}[Proof of Proposition \ref{prop:nu=1}]
    By Proposition \ref{prop:nu=0}, we have that $\nu(-K_X)\neq 0$. Thus, we can take a $K_X$-negative extremal  contraction $f\colon X\to Y$ and the assertion follows from Lemmas \ref{lem:nu=1,non-birat} and \ref{lem:nu=1,birat}.
\end{proof}

\subsection{Proof of Theorem \ref{mainthm}}

\begin{proof}[Proof of Theorem \ref{mainthm}]
    It suffices to show that $\nu(-K_X)=3$, but this follows from Propositions \ref{prop:nu=0}, \ref{prop:nu=2}, and \ref{prop:nu=1}.
\end{proof}

\begin{rem}
Ma and Schwede \cite{MaS21} prove a refinement of the correspondence between strongly $F$-regular and klt singularities: a normal $\Q$-Gorenstein singularity $(X,x)$ over a field of characteristic zero is klt if and only if its reduction modulo $p$ is strongly $F$-regular for a single $p$. 
One might expect that Conjecture \ref{Introconj} could be refined in a similar way, but this is not the case. 
\cite[Example 4.16]{Sato-Takagi21}, which is a modification of \cite[Theorem 1.1]{Singh99}, provides an example of complex projective varieties that is not of Fano type, but its reduction modulo $p$ is globally $F$-regular for a single $p$. 
This tells us that the global correspondence, Conjecture \ref{Introconj}, is much more subtle than the local correspondence. 
\end{rem}

\bibliographystyle{alpha}
\bibliography{hoge.bib}

\bigskip

\end{document}